\theoremstyle{plain}
\newtheorem{theorem}{Theorem}[section]
\newtheorem{lemma}[theorem]{Lemma}
\newtheorem{proposition}[theorem]{Proposition}
\newtheorem{corollary}[theorem]{Corollary}
\theoremstyle{definition}
\newtheorem{remark}[theorem]{Remark}
\numberwithin{equation}{section}
\title[Subharmonic bifurcations]{On first subharmonic bifurcations in a branch of Stokes waves.}
\author{Vladimir Kozlov$^1$}
\address{$^1$Department of Mathematics, Link\"oping University, SE-581 83 Link\"oping, Sweden}
\begin{document}
	
\begin{abstract} Steady surface waves in a two-dimensional channel are considered. We study bifurcations, which occur on a branch of Stokes water waves starting from a uniform stream solution. Two types of bifurcations are considered: bifurcations in the class of Stokes waves (Stokes bifurcation) and bifurcations in a class of periodic waves with the period $M$ times the period of the Stokes wave ($M$-subharmonic bifurcation). If we consider the first Stokes bifurcation point
 then there are no $M$-subharmonic bifurcations before this point and there exists  $M$-subharmonic bifurcation points after the first Stokes bifurcation for sufficiently large $M$, which approach the Stokes bifurcation point when $M\to\infty$. Moreover the set of $M$-subharmonic bifurcating solutions is a closed connected continuum. We give also a more detailed description of this connected set in terms of the set of its limit points, which must contain extreme waves, or overhanging waves, or solitary waves or waves with stagnation on the bottom, or Stokes bifurcation points different from the initial one.

\end{abstract}

\maketitle

\section{Introduction}

 \subsection{Background}

We consider steady surface waves in a two-dimensional channel bounded below by a flat,
rigid bottom and above by a free surface.
The main subject of this work is subharmonic bifurcations on the branches of Stokes waves.

Stokes and solitary waves (regular waves) were the main subject of study up to 1980.
In 1980 (see Chen \cite{Che} and Saffman \cite{Sa}) it was discovered numerically and in 2000 (see \cite{BDT1,BDT2}) this was supported theoretically for the ir-rotational case for a flow of infinite depth that there exist new types of periodic waves with several crests on the period (the Stokes wave has only one crest). These waves occur as a result of bifurcation on a branch of Stokes waves when they approach the wave of greatest amplitude. 


 Starting point of our study is a branch of Stokes waves starting from uniform stream solution and approaching an extreme wave. This branch is parameterized by a parameter $t$. One can study bifurcations of branches of Stokes waves of period $\Lambda(t)$ in
 one of the following settings:

(i) in the class of $\Lambda(t)$-periodic solutions (Stokes bifurcation);

(ii) in the class of $M\Lambda(t)$-periodic solutions (M-subharmonic bifurcation);

(iii) in the class of bounded solutions.

\noindent
In this paper we will deal mainly with (i) and (ii).

Let us denote by $\{\tilde{t}_j\}$, $j=1,\ldots,\infty$, the Stokes bifurcation points. The following theorem is proved in \cite{Koz1}

\begin{theorem}\label{Tmain22} {\rm (i)} There exists a sequence $(\widehat{t}_j, M_j)$, where $\widehat{t}_j\neq \tilde{t}_k$ for all $j$ and $k$; $M_j$ are integers, and
$$
\widehat{t}_j\to\infty,\;\;M_j\to\infty\;\;\mbox{as $j\to\infty$}.
$$
Moreover, $\widehat{t}_j$ is a $M_j$-subharmonic bifurcation point.

{\rm (ii)} There exists a sequence $(\widehat{t}_j, M_j)$, where $\widehat{t}_j\neq \tilde{t}_k$ for all $j$ and $k$, the sequence $\{\widehat{t}_j\}$ is bounded and
$$
M_j\to\infty\;\;\mbox{as $j\to\infty$}.
$$
Furthermore, $\widehat{t}_j$ is a a $M_j$-subharmonic bifurcation point.
The numbers $\widehat{t}_j$ are pairwise different in both cases.
\end{theorem}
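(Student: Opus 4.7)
The natural setting is Floquet--Bloch decomposition with respect to the fundamental period $\Lambda(t)$. Denote by $L_t$ the linearisation of the steady water wave operator at the Stokes wave with branch parameter $t$. Any $M\Lambda(t)$-periodic perturbation decomposes into Bloch modes of the form $e^{2\pi i\alpha x/\Lambda(t)} u(x)$ with $u$ being $\Lambda(t)$-periodic and with Floquet exponent $\alpha \in \{0, 1/M, 2/M,\dots,(M-1)/M\}$. The operator $L_t$ leaves each Bloch subspace invariant, so detection of bifurcations reduces to locating the kernel of the restricted operator $L_{t,\alpha}$. After a Lyapunov--Schmidt reduction the problem becomes the analysis of an analytic scalar (or small-matrix) function $F(t,\alpha)$ whose zeros are exactly the bifurcation candidates. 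The value $\alpha=0$ governs Stokes bifurcations, so $F(\tilde t_k,0)=0$; values $\alpha = j/M$ with $1\le j<M$ give $M$-subharmonic bifurcation points, provided they are genuine (i.e.\ not merely $\Lambda$-periodic and transverse in $t$).

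For part (ii) I would argue by local accumulation near a Stokes bifurcation point. Fix $\tilde t_k$. Under the standard simple-eigenvalue-crossing hypothesis at $\tilde t_k$, namely $\partial_t F(\tilde t_k,0) \neq 0$, the implicit function theorem produces a germ $\alpha\mapsto t(\alpha)$, analytic near $\alpha=0$, with $F(t(\alpha),\alpha)=0$ and $t(0)=\tilde t_k$. For every sufficiently large $M$ pick an integer $j(M)$ with $j(M)/M$ inside the domain of this germ and set $\widehat t_M := t(j(M)/M)$. Each $\widehat t_M$ is an $M$-subharmonic bifurcation point by Crandall--Rabinowitz applied to the $\alpha = j(M)/M$ Bloch block, because the transversality $\partial_t F(\widehat t_M,j(M)/M)\ne 0$ persists from $\alpha = 0$ by continuity. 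Since $j(M)/M \neq 0$ and $j(M)/M$ is irrational for almost all choices (and in any case one can arrange $\widehat t_M \ne \tilde t_\ell$ by discarding at most countably many values of $M$), these points are bounded (they cluster at $\tilde t_k$), distinct from all $\tilde t_\ell$, with $M_j\to\infty$; a straightforward sifting gives pairwise different $\widehat t_j$.

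For part (i) I would globalise: the set $\{\tilde t_k\}$ of Stokes bifurcation points itself is unbounded along the branch, which is known from spectral analysis of $L_{t,0}$ as $t \to \infty$ and the wave approaches its limiting profile. Applying the local construction of part (ii) near each $\tilde t_k$ with $k$ arbitrarily large produces $M$-subharmonic bifurcation points $\widehat t_{k,M}$ arbitrarily close to arbitrarily large $\tilde t_k$. A diagonal extraction $\widehat t_j := \widehat t_{k_j, M_j}$ with $k_j,M_j\to\infty$ yields the required sequence, again with $\widehat t_j \ne \tilde t_\ell$ and pairwise different by discarding at most countably many indices.

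The main technical obstacle is the verification of the crossing/transversality condition inside each Bloch block: one must check that $\partial_t F(t,\alpha)\ne 0$ in a full neighbourhood of $(\tilde t_k,0)$ (uniformly in $\alpha$), and that the kernel at $\alpha = j/M$ is one-dimensional so that Crandall--Rabinowitz applies. This requires rather delicate control of the reduced determinant $F$ on a complex neighbourhood of $\alpha=0$, together with an a priori spectral simplicity statement for the linearised operator at the Stokes bifurcation point. A secondary obstacle is ensuring that the subharmonic points are not themselves Stokes bifurcation points in disguise, which boils down to showing that the branch $t(\alpha)$ is strictly monotone (or at least nonconstant) in a neighbourhood of $\alpha=0$, a consequence of the simple-eigenvalue hypothesis at $\tilde t_k$.
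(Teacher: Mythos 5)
Your Floquet--Bloch framework and the reduction to locating $\tau$-roots of a scalar function $F(t,\tau)$ in the various Bloch blocks does match the approach in \cite{Koz1} (which is where Theorem~\ref{Tmain22} is actually proved; this paper only cites it). But there is a genuine gap at the step you yourself flag as the main obstacle. You invoke the implicit function theorem and Crandall--Rabinowitz, which require the transversality condition $\partial_t F(\tilde t_k,0)\neq 0$ and simplicity of the kernel in each Bloch block. Theorem~\ref{Tmain22} has \emph{no} such hypothesis, and the paper is explicit that the cited proof instead relies on a bifurcation theorem for potential operators with nonzero crossing number --- a topological-degree argument whose entire purpose is to bypass unverifiable transversality. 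Indeed, the hypotheses you smuggle in are essentially what the present paper adds for its new Theorem~1.2 via the assumption (\ref{KKK1}); using them to prove the unconditional Theorem~\ref{Tmain22} proves a weaker statement. Compounding this, the eigenvalues $\widehat{\mu}_1(t,\tau)$ and $\widehat{\mu}_2(t,\tau)$ can collide near $(t_0,0)$; only the symmetric functions $\widehat{\mu}_1+\widehat{\mu}_2$ and $\widehat{\mu}_1\widehat{\mu}_2$ are analytic (Lemma~\ref{LF7a}), so the reduced function is not a clean analytic scalar germ to which the implicit function theorem applies directly.

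For part (i) your globalisation rests on the claim that the set $\{\tilde t_k\}$ of Stokes bifurcation points is itself unbounded. What is actually available from \cite{KL2,Koz1} is that the number of negative eigenvalues of the Fr\'echet derivative tends to infinity as $t\to\infty$; as the introduction here stresses, this does not by itself separate Stokes from subharmonic crossings, and that separation is precisely the nontrivial content of the cited proof. You would also need to rule out that all the eigenvalue sign changes you observe occur at $\tau=0$ (which would give only Stokes bifurcations) or at $\tau$-values giving zero net crossing number. In short: right decomposition, wrong closing mechanism. The argument needs to be recast so that it produces a nonzero crossing number for the potential operator restricted to $M\Lambda$-periodic even functions at a $t$ which is not a Stokes bifurcation point, rather than a Crandall--Rabinowitz transversal crossing.
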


The proof of this theorem is based on the fact that when the Stokes waves approach  the extreme wave many new bifurcation points appear. The proof of this theorem involves  a bifurcation theorem for potential operators with non-zero crossing number, which gives unfortunately no information on the structure of the set of bifurcating solutions (see \cite{To1} for a discussion of this structure).

Here we study a mechanism of appearance of the first subharmonic bifurcations. The main result of this paper is the following

\begin{theorem} Let $t_0>0$ be the first Stokes bifurcation point, i.e. the condition {\rm (\ref{KKK1})} be valid. Then there exists  $(t_M,M)$, where $M$ is a large integer and
$$
t_M\to t_0\;\;\;\mbox{as}\;\; M\to\infty,
$$
 such that $t_M$ is $M$- subharmonic bifurcation point. There are no subharmonic bifurcations for $t<t_0$.
\end{theorem}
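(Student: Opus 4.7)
The plan is to cast the problem spectrally and use a Bloch/Floquet decomposition of the linearised operator along the branch of Stokes waves. Writing $F(u,t)=0$ for the governing equation with $u(t)$ the Stokes branch, set $L(t)=F_u(u(t),t)$. Since the coefficients of $L(t)$ are $\Lambda(t)$-periodic, $L(t)$ preserves Bloch subspaces of quasi-momentum $\tau\in[0,2\pi/\Lambda(t))$. An $M$-subharmonic bifurcation corresponds to some $t$ and some $\tau$ of the form $2\pi k/(M\Lambda(t))$, $k\in\{1,\dots,M-1\}$, at which the restriction of $L(t)$ to the $\tau$-quasi-periodic subspace has nontrivial kernel; a Stokes bifurcation is the case $\tau=0$. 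Under hypothesis (\ref{KKK1}) the relevant eigenvalue branch $\lambda_0(\tau,t)$ is simple at $(0,t_0)$, vanishes there, and crosses zero transversally in $t$.

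For the existence statement, analytic perturbation theory extends $\lambda_0(\tau,t)$ analytically to a neighbourhood of $(0,t_0)$. Transversality $\partial_t\lambda_0(0,t_0)\neq 0$ together with the implicit function theorem yields a smooth curve $t=\mathfrak{t}(\tau)$ with $\mathfrak{t}(0)=t_0$ and $\lambda_0(\tau,\mathfrak{t}(\tau))=0$. The admissible quasi-momenta $\{2\pi k/(M\Lambda(t))\}_{k=1}^{M-1}$ become dense in every neighbourhood of $0$ as $M\to\infty$, so for each large $M$ one picks $\tau_M$ in this set with $\tau_M\to 0$ and sets $t_M:=\mathfrak{t}(\tau_M)$; then $t_M\to t_0$. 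That $t_M$ is genuinely an $M$-subharmonic bifurcation point, and not merely a candidate detected by the linearisation, follows from a standard one-dimensional-kernel local bifurcation theorem applied to $F$ restricted to the space of $M\Lambda(t_M)$-periodic functions, the simplicity of the kernel carrying over from that of $\lambda_0$.

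For the nonexistence assertion I argue by contradiction. Suppose an $M$-subharmonic bifurcation occurs at some $t^*<t_0$; by the Bloch decomposition, $L(t^*)$ has a nontrivial kernel on some $\tau^*$-quasi-periodic subspace, i.e.\ $\lambda_j(\tau^*,t^*)=0$ for some Bloch eigenvalue branch $\lambda_j$. Continuity of Bloch eigenvalues in $(\tau,t)$, together with the uniform sign-definiteness of $L(t)$ for small $t$ (the uniform-stream regime, where the spectrum is bounded away from $0$ independently of $\tau$), allows one to define the first-crossing time $t_\star(\tau)$, the infimum of those $t>0$ at which some Bloch eigenvalue vanishes on the $\tau$-quasi-periodic subspace; this function is continuous on $[0,2\pi/\Lambda)$. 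The statement that $t_0$ is the first Stokes bifurcation means $t_\star(0)=t_0$, and (\ref{KKK1}) encodes the additional fact that $t_0$ is the global minimum, so $t_\star(\tau)\ge t_0$ for all $\tau$, contradicting $t^*<t_0$.

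The main obstacle, in my view, is the nonexistence claim: one must show that the first $t$ at which \emph{any} Bloch eigenvalue reaches zero coincides with the first Stokes bifurcation at $\tau=0$. Near $\tau=0$ this follows from the local analysis sketched above, but globally it requires a genuine monotonicity- or convexity-type property of the linearised water-wave operator along the Stokes branch, which is not an automatic consequence of the Stokes-wave structure and is presumably the technical content of condition (\ref{KKK1}). A secondary technical point is to verify the hypotheses of the local bifurcation theorem and the transversality of the crossing uniformly as $M\to\infty$, so that the conclusion applies to every sufficiently large $M$ and produces the promised sequence $(t_M,M)$.
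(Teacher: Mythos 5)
Your high-level strategy — Bloch decomposition, locate zero sets of the Bloch eigenvalues, pick quasi-momenta $\tau=\tau_*/M$ and push $M\to\infty$ — is the same as the paper's, but several of the intermediate claims do not hold and the central mechanism behind the nonexistence statement is missing.

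\textbf{Transversality is not assumed and not automatic.} You invoke $\partial_t\lambda_0(0,t_0)\neq 0$ and the implicit function theorem to produce a smooth curve $t=\mathfrak{t}(\tau)$. But condition (\ref{KKK1}) only asserts a sign change of $\mu_1(t)$ at $t_0$, not a nonzero derivative, and the relevant Bloch eigenvalue $\widehat\mu_2(t,\tau)$ is only known to be a root of a polynomial with analytic coefficients (Lemma \ref{LF7a}), not itself analytic. The paper instead works with a Puiseux expansion (\ref{D15aa}) for the $\tau$-roots $\widehat\tau_k(t)$, and Lemma \ref{LF7aa} carefully isolates the smallest $n$ with $\partial_\tau^n\widehat\mu_2\neq 0$ along each root. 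The simple IFT curve does not exist in general, and your existence argument needs to be rebuilt on the Puiseux/odd-crossing-number framework. Relatedly, after identifying a zero eigenvalue at $t_M$, you appeal to a ``standard one-dimensional-kernel local bifurcation theorem''; but a zero eigenvalue alone does not give bifurcation — one needs a sign change (odd crossing number), and the paper's selection of the index $n_*$ among the roots $\widehat\tau_1<\cdots<\widehat\tau_m$ is precisely what guarantees this. That selection is absent from your sketch.

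\textbf{The nonexistence argument as written would fail.} You propose a first-crossing time $t_\star(\tau)$ anchored by ``uniform sign-definiteness of $L(t)$ for small $t$, where the spectrum is bounded away from $0$ independently of $\tau$.'' This premise is false: $\widehat\mu_0(t,\tau)<0$ for all $t$ and $\tau$, and in fact $\widehat\mu_1(t,\tau)<0$ for all $\tau\in(0,\tau_*)$ as well, already at $t=0$. There is no sign-definite regime. The actual mechanism in the paper is entirely different and is where the real work is: Lemma \ref{L26a} sandwiches the Bloch eigenvalues between the eigenvalues of Dirichlet/Neumann-type auxiliary problems on a half-period, giving in particular $\widehat\mu_2(t,\tau)>\mu_1(t)$ and $\widehat\mu_1(t,\tau)<0$ for all $\tau\in(0,\tau_*)$. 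The inequality $\widehat\mu_2>\mu_1$ is what rules out subharmonic bifurcations for $t<t_0$ (where $\mu_1(t)>0$), and it is proved using the positivity of $\Psi_X$ (Proposition \ref{Pr1}) together with the auxiliary eigenvalues $\nu^{0*},\nu^{*0},\nu^{00}$; you should not expect to recover it from a generic ``monotonicity- or convexity-type property'' of the linearisation. You correctly flag the nonexistence claim as the hard part, but you attribute it to the wrong source: it is not hidden inside (\ref{KKK1}) — it is the content of Lemma \ref{L26a}.
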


We prove this theorem by using a bifurcation theorem with odd crossing number, which leads to existence of a closed connected continuum of $M\Lambda$-periodic solutions containing the bifurcation point $t_M$  (see Theorems \ref{TF9a}, \ref{T4.2C} and \ref{ThA22a} for more details).

In the study of bifurcations the Frechet derivative plays important role. In our case it is a self-adjoint operator bounded from below.
 Certainly the Frechet derivatives are defined on different spaces in the cases (i)-(iii). In the study of the Stokes bifurcations  this is a usual Frechet derivative defined on $\Lambda$-periodic, even solutions. If we denote by $\mu_0(t)$ and $\mu_1(t)$ the first and the second eigenvalue of the Frechet derivative, then $\mu_0(t)<0$ for all $t$ and $\mu_1(0)=0$. We consider the case when
 \begin{eqnarray}\label{KKK1}
 &&\mu_1(t)>0\;\; \mbox{for small positive $t$ and assume that there exists $t_0>0$}\nonumber\\
&&\mbox{ such that $\mu_1(t_0)=0$ and $\mu_1(t)<0$ for small positive $t-t_0$}.
 \end{eqnarray}
  We will call $t_0$ the first Stokes bifurcation.

 In the case of $M$ -subharmonic bifurcations  we have the same expression for the Frechet derivative but now it is defined on $M\Lambda$-periodic, even functions and finally  in the case of bifurcations in a class of bounded solutions the Frechet derivative is defined on the even functions defined on the whole domain without periodicity condition.
By introducing quasi-momentum $\tau$ we can give another equivalent definition of the Frechet derivative in the cases (ii) and (iii) which is defined on the space of $\Lambda$-periodic functions but depending on the real parameter $\tau$. We denote by $\widehat{\mu}_j(t,\tau)$, $j=0,\ldots$,  the eigenvalues of this problem, which are numerated according to the increasing order:
$$
\widehat{\mu}_0(t,\tau)\leq\widehat{\mu}_1(t,\tau)\leq\cdots
$$
The essential part of the paper is devoted to a study of properties of these eigenvalues. The eigenvalue $\widehat{\mu}_0(t,\tau)$ is always less than $0$ for all $t$ and $\tau\in\Bbb R$. If the first Stokes bifurcation occurs at a certain $t=t_0$, then it is proved that the kernel of the Frechet derivative at this point is one-dimensional. Another important property concerning the Frechet derivative is the inequalities
$$
\widehat{\mu}_1(t,\tau)<0\;\;\mbox{for $t\in (t_0,t_0+\epsilon)$ and $\tau\in\Bbb R$},
$$
where $\epsilon$ is a small positive number, and
$$
\widehat{\mu}_2(t_0,\tau)>0\;\;\mbox{for $\tau\in (0,\tau_*)$},\;\;\tau_*=2\pi/\Lambda.
$$
These inequalities together with asymptotic properties of $\widehat{\mu}_2$ in a neighborhood of the point $(t,\tau)=(t_0,0)$ allows to perform analysis of subharmonic bifurcations near the point $t=t_0$. $M$-subharmonic bifurcation points $t_M$ are bigger then $t_0$ and $t_M\to t_0$ as $M\to\infty$.  Moreover, we show that the crossing number at $t_M$ is odd and hence there is a closed connected continuum of $M $-subharmonic bifurcations started from the bifurcation point.

 As is known one of the indication of bifurcation is the change of the Morse index, or changing of the number of negative eigenvalues of the Frechet derivatives, which depend on the parameter $t$. In the papers \cite{KL2} and \cite{Koz1}  it was proved that for the branch of Stokes waves approaching the extreme Stokes wave unlimited number of negative eigenvalues appears, which implies unlimited number of bifurcations, see \cite{Koz1}. The problem here is that all these bifurcations can be Stokes bifurcations and the above fact on negative eigenvalues does not directly imply existence of subharmonic bifurcations. More advance analysis is required.

Our aim is to study the first Stokes bifurcation and show that there are always accompanying $M$-subharmonic bifurcations with arbitrary large $M$. They generates by $\tau$-roots of the equation $\widehat{\mu}_2(t,\tau)=0$. The main body of the paper is devoted to the study of this equation. This analysis is based on spectral analysis of different spectral problems and asymptotic analysis of eigenvalues for small $\tau$.

\subsection{Formulation of the problem}

We consider steady surface waves in a two-dimensional channel bounded below by a flat,
rigid bottom and above by a free surface that does not touch the bottom. The surface tension is neglected and the water motion can be rotational.
In appropriate Cartesian coordinates $(X, Y )$, the bottom coincides with the
$x$-axis and gravity acts in the negative $y$ -direction. We choose the frame of reference so that the velocity field is time-independent as well as the free-surface profile
which is supposed to be the graph of $Y = \xi(X)$, $X \in \Bbb R$, where $\xi$ is a positive and
continuous unknown function. Thus
$$
{\mathcal D}_\xi = \{X\in \Bbb R, 0 <Y < \xi(X)\},\;\;{\mathcal S}_\xi=\{X\in\Bbb R,\;Y=\xi(X)\}
$$
is the water domain and the free surface respectively. We will use the stream function $\Psi$, which is connected with the velocity vector $({\bf u},{\bf v})$ by ${\bf u}=-\Psi_Y$ and ${\bf v}=\Psi_X$.

We assume that $\xi$ is a positive,  periodic function having period $\Lambda>0$ and that $\xi$   is even and strongly decreasing on the interval $(0,\Lambda/2)$.  Since the surface tension is neglected, $\Psi$ and
$\xi$  satisfy, after a certain scaling, the following free-boundary problem (see for example \cite{KN14}):
\begin{eqnarray}\label{K2a}
&&\Delta \Psi+\omega(\Psi)=0\;\;\mbox{in ${\mathcal D}_\xi$},\nonumber\\
&&\frac{1}{2}|\nabla\Psi|^2+\xi=R\;\;\mbox{on ${\mathcal S}_\xi$},\nonumber\\
&&\Psi=1\;\;\mbox{on ${\mathcal S}_\xi$},\nonumber\\
&&\Psi=0\;\;\mbox{for $Y=0$},
\end{eqnarray}
where $\omega\in C^{1,\alpha}$, $\alpha\in (0,1)$, is a vorticity function and $R$ is the Bernoulli constant. We assume that $\Psi$ is even, $\Lambda$-periodic in $x$ and
\begin{equation}\label{J27ba}
\Psi_Y>0\;\;\mbox{on $\overline{{\mathcal D}_\xi}$},
\end{equation}
which means that the flow is unidirectional.

The Frechet derivative for the problem is evaluated for example in \cite{KL2}, \cite{Koz1},  and the corresponding eigenvalue problem for the  Frechet derivative has the form
\begin{eqnarray}\label{J17ax}
&&\Delta w+\omega'(\Psi)w+\mu w=0 \;\;\mbox{in ${\mathcal D}_\xi$},\nonumber\\
&&\partial_\nu w-\rho w=0\;\;\mbox{on ${\mathcal S}_\xi$},\nonumber\\
&&w=0\;\;\mbox{for $Y=0$},
\end{eqnarray}
where $\nu$ is the unite outward normal to ${\mathcal S}_\xi$ and
\begin{equation}\label{Sept17aa}
\rho=
\rho(X)=\frac{(1+\Psi_X\Psi_{XY}+\Psi_Y\Psi_{YY})}{\Psi_Y(\Psi_X^2+\Psi_Y^2)^{1/2}}\Big|_{Y=\xi(X)}.
\end{equation}
The function $w$ in (\ref{J17ax}) is supposed also to be even and $\Lambda$-periodic. 

Let us introduce several function spaces. Let $\alpha\in (0,1)$ and $k=0,1,\ldots$. The space $C^{k,\alpha}({\mathcal D})$ consists of bounded  functions in ${\mathcal D}$ such that the norms $C^{k,\alpha}(\overline{{\mathcal D}_{a,a+1}})$ are uniformly bounded with respect to $a\in\Bbb R$. Here
$$
{\mathcal D}_{a,a+1}=\{(X,Y)\in {\mathcal D},\;:\,a<x<a+1\}.
$$
The space   $C^{k,\alpha}_{0,\Lambda }({\mathcal D})$ \big($C^{k,\alpha}_{0,\Lambda, e}({\mathcal D})\big)$ consists of $\Lambda$-periodic ($\Lambda$-periodic and even) functions, which belong to $C^{k,\alpha}({\mathcal D})$ and vanish at $y=0$.

Similarly we define the space  $C^{k,\alpha}_{\Lambda}(\Bbb R)$  ($C^{k,\alpha}_{\Lambda, e}(\Bbb R)$) consisting of functions in $C^{k,\alpha}(\Bbb R)$, which are $\Lambda$-periodic ($\Lambda$-periodic and even).

We will consider a branch of Stokes water waves depending on a parameter $t\in\Bbb R$, i.e.
$$
\xi=\xi(X,t),\;\;\Psi=\Psi(X,Y;t),\;\;\Lambda=\Lambda(t).
$$
For each $t$ the function $\xi\in C^{2,\alpha}(\Bbb R)$ and $\Psi\in C^{3,\alpha}({\mathcal D})$. This branch starts from a uniform stream solution for $t=0$ and approach an extreme wave when $t\to\infty$. The dependence on $t$ is analytic in the following sense: the functions
$$
\xi(X\Lambda(0)/\Lambda(t),t)\,:\,\Bbb R\rightarrow C^{2,\alpha}_{\Lambda(0),e}(\Bbb R)\;\;\mbox{and}\;\;\Lambda\,:\,\Bbb R\rightarrow (0,\infty)
$$
are analytic with respect to $t$ and the function $\Psi$ can be found from the problem
\begin{eqnarray*}
&&\Delta \Psi+\omega(\Psi)=0\;\;\mbox{in ${\mathcal D}_\xi$},\\
&&\Psi=1\;\;\mbox{on ${\mathcal S}_\xi$},\nonumber\\
&&\Psi=0\;\;\mbox{for $Y=0$}.
\end{eqnarray*}
Another equivalent description of the analytical property of functions $\xi$ and $\Psi$ is presented in Sect. \ref{SF22}.

The main assumption is given in terms of the second eigenvalue of the spectral problem (\ref{J17ax}). One can show that the first eigenvalue $\mu_0(t)$ is always negative and the second one $\mu_1(t)$ is zero for $t=0$. Our main assumption is that
\begin{equation}\label{F14a}
\mu_1(t)>0 \;\;\mbox{for small positive $t$}.
\end{equation}
This property will be discussed in detail in forthcoming paper.

\section{Spectral problem (\ref{J17ax}), $\Lambda$-periodic, even functions}\label{SJ24}

Let
$$
D=D_\xi=\{ (X,Y)\,:\,0< X< \Lambda/2,\;0<X<\xi(X)\}
$$
and
$$
S=S_\xi=\{ (X,Y)\,:\,0< X< \Lambda/2,\;Y=\xi(X)\}.
$$
We introduce the form
\begin{equation}\label{Ja22b}
a(u,v)\!\!=\!\!a_D(u,v)\!\!=\!\!\int_{D}\Big(\nabla u\cdot\nabla\overline{v}-\omega'(\Psi)u\overline{v}\Big)dXdY-\int_{0}^{\Lambda/2}\rho(X)u\overline{v}dS,\;\;dS=\sqrt{1+\xi'^2}dX,
\end{equation}
which is defined on  functions from $H^1(D)$ vanishing for $Y=0$. This space will be denoted by $H^1_0(D)$.

The assumption (\ref{J27ba}) implies
\begin{equation}\label{J27c}
a_D(u,u)>0\;\;\mbox{for nonzero $u\in H^1_0(D)$ satisfying $u(X,\xi(X))=0$ for $X\in (0,\Lambda/2)$}.
\end{equation}

Since $\partial_Xw(0,Y)=\partial_Xw(\Lambda/2,Y)=0$ in (\ref{J17ax}), the spectral problem (\ref{J17ax}) admits the following variational formulation. A real number $\mu$ and  a function $\phi \in H^1_0(D)$ satisfy (\ref{J17ax}), after natural extension on ${\mathcal D}_\xi$, if and only if
\begin{equation}\label{Sept8a}
a(\phi,v)=\mu (\phi,v)_{L^2(D)},\;\;\mbox{for all $v\in H^1_0(D)$}.
\end{equation}
Here
$$
(u,v)_{L^2(D)}=\int_Du\overline{v}dXdY.
$$
We put also $||u||_{L^2(D)}^2=(u,u)_{L^2(D)}$.
We numerate the eigenvalues accounting their multiplicity as
$$
\mu_0\leq \mu_1\leq\cdots ,\;\;\mu_j\to\infty\;\;\mbox{as}\;\;j\to\infty,
$$
and denote by $\Phi_j$, $j=0,1,\ldots$, corresponding eigenfunctions. They can be chosen to be orthogonal to each other, real-valued and with the norm
$$
||\Phi_j||_{L^2(D)}=1.
$$
Thus $\{\Phi_j\}$ forms an orthogonal basis in $L^2(D)$.

In what follows we shall use the function
\begin{equation}\label{Sept13b}
u_*(X,Y):=\Psi_X(X,Y),
\end{equation}
which is odd, $\Lambda$-periodic and belongs to $C^{2,\alpha}(\overline{D})$. It satisfies the boundary value problem
\begin{eqnarray}\label{Okt21a}
&&\Delta u_*+\omega'(\Psi)u_*=0 \;\;\mbox{in $D_\xi$},\nonumber\\
&&\partial_\nu u_*-\rho u_*=0\;\;\mbox{on $S_\xi$},\nonumber\\
&&u_*=0\;\;\mbox{for $X=0$},\nonumber\\
&&u_*(0,Y)=0\;\;\mbox{for $Y\in (0,\xi(0))$},\;\;u_*(\Lambda/2,Y)=0\;\;\mbox{for $Y\in (0,\xi(\Lambda/2))$},
\end{eqnarray}
and it has the following properties along branches of Stokes waves (see \cite{Koz2})
\begin{eqnarray}\label{Au16a}
&&{\rm (i)}\;\;\;\;u_*>0\;\; \mbox{for $(X,Y)\in D\cup S$};\nonumber\\
&&{\rm (ii)}\;\;\;(u_*)_X(0,Y)>0\;\; \mbox{for $0<Y\leq \xi(0)$,\;\; $(u_*)_X(\Lambda/2,Y)<0$ for $0<Y\leq \xi(\Lambda/2)$};\nonumber\\
&&\;\;\;\;\;\;\;\;\mbox{ and \;\;$(u_*)_Y(X,0)>0$\; for $0<X<\Lambda/2$};\nonumber\\
&&{\rm (iii)}\;\;(u_*)_{XY}(0,0)>0,\;\; (u_*)_{XY}(\Lambda/2,0)<0.
\end{eqnarray}
The above properties of the function $\Psi_x$-the vertical component of the velocity vector, are found interesting application in study of branches of Stokes waves, see \cite{CSst}, \cite{CSrVar}, \cite{Var23}, \cite{Wa} and \cite{Koz2}. It appears that they are also important in study of subharmonic bifurcations. They are used, in particular, in the proof of the following assertion.

\begin{proposition}\label{Pr1}  Let $\xi$ be not identically constant then the eigenvalue $\mu_0$ is simple \footnote{This implies in particular that $\mu_0<\mu_1$.}. Moreover
$\mu_0<0$ and the corresponding eigenfunction ($\Phi_0$) does not change sign inside $D$. If we assume that $\Phi_0$ is positive inside $D$ then $\Phi_0$ is also positive for $Y=\xi(X)$ and for $X=0$, $Y\in (0,\xi(0))$ and $X=\Lambda/2$, $Y\in (0,\xi(0))$, $Y\in (0,\xi(\Lambda/2))$.
\end{proposition}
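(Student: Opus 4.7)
The plan is to combine three ingredients: the Rayleigh variational characterisation of $\mu_{0}$, the strong maximum principle together with Hopf's boundary-point lemma, and the structural properties (\ref{Au16a}) of the vertical velocity component $u_{*}=\Psi_{X}$.

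First I would establish $\mu_{0}<0$ by using $u_{*}$ as a competitor in the Rayleigh quotient. Because $\Psi(X,0)=0$ and $\Psi$ is even and $\Lambda$-periodic in $X$, the function $u_{*}$ belongs to $H^{1}_{0}(D)$ and satisfies (\ref{Okt21a}). Integrating by parts, using the equation in $D$, the Robin condition on $S$, and $u_{*}=0$ on the lateral segments $\{X=0\}$ and $\{X=\Lambda/2\}$, one gets $a_{D}(u_{*},u_{*})=0$, so $\mu_{0}\le 0$. If $\mu_{0}=0$, then $u_{*}$ would be a minimiser of the Rayleigh quotient and would solve $a_{D}(u_{*},v)=0$ for every $v\in H^{1}_{0}(D)$. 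Integration by parts in this identity reduces it to the lateral boundary contribution
\[
-\int_{0}^{\xi(0)}(u_{*})_{X}(0,Y)\,v(0,Y)\,dY+\int_{0}^{\xi(\Lambda/2)}(u_{*})_{X}(\Lambda/2,Y)\,v(\Lambda/2,Y)\,dY=0,
\]
which fails for a generic $v$ because (\ref{Au16a})(ii) gives $(u_{*})_{X}(0,Y)>0$ and $(u_{*})_{X}(\Lambda/2,Y)<0$, these strict inequalities being available precisely because $\xi$ is non-constant. Hence $\mu_{0}<0$.

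Second, for the non-change of sign of $\Phi_{0}$ I would use the standard variational device: $|\Phi_{0}|\in H^{1}_{0}(D)$ has the same Rayleigh quotient as $\Phi_{0}$, so $|\Phi_{0}|$ is itself a minimiser and therefore a $\mu_{0}$-eigenfunction. Elliptic regularity makes it a classical non-negative solution of $\Delta|\Phi_{0}|+(\omega'(\Psi)+\mu_{0})|\Phi_{0}|=0$ in $D$, and the strong maximum principle forces $|\Phi_{0}|>0$ throughout $D$; so $\Phi_{0}$ has constant sign in $D$. Simplicity is then immediate: if $\Phi$ were a $\mu_{0}$-eigenfunction linearly independent from $\Phi_{0}$, choosing $c=\Phi(x_{0})/\Phi_{0}(x_{0})$ at any $x_{0}\in D$ would produce a nontrivial $\mu_{0}$-eigenfunction $\Phi-c\Phi_{0}$ with an interior zero at $x_{0}$, contradicting the strict positivity just proved for the absolute value of every such eigenfunction.

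Third, assuming $\Phi_{0}>0$ in $D$, boundary positivity is obtained in two separate applications of maximum principle type tools. If $\Phi_{0}(X_{0},\xi(X_{0}))=0$ at some $X_{0}\in(0,\Lambda/2)$, Hopf's boundary-point lemma (available because $S$ is $C^{2,\alpha}$) would give $\partial_{\nu}\Phi_{0}(X_{0},\xi(X_{0}))<0$, contradicting the Robin condition $\partial_{\nu}\Phi_{0}=\rho\Phi_{0}=0$ at that point. For the lateral segments, the evenness and $\Lambda$-periodicity of $\Psi$ let $\omega'(\Psi)$ be reflected evenly across $X=0$ and $X=\Lambda/2$, so that the corresponding even reflection of $\Phi_{0}$ extends it to a non-negative classical solution of the eigenvalue equation in a strictly larger open set; a zero on $\{X=0,\ Y\in(0,\xi(0))\}$ (respectively on $\{X=\Lambda/2,\ Y\in(0,\xi(\Lambda/2))\}$) would then be an interior minimum, ruled out by the strong maximum principle. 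The most delicate point is ensuring that the strong maximum principle and Hopf's lemma apply even though the zeroth-order coefficient $\omega'(\Psi)+\mu_{0}$ has no definite sign and the boundary of $D$ has corners where $S$ meets the lateral segments; both tools require only the boundedness of the coefficient and $C^{2,\alpha}$-regularity of $S$ away from the corners, and since positivity is only asserted at interior points of $S$ and of the open lateral segments, the corners do not enter the argument.
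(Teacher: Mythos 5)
Your proposal is correct and follows the same overall architecture as the paper's proof (use $u_*=\Psi_X$ to force $\mu_0\le 0$, rule out equality, show $\Phi_0$ has constant sign by a variational device, then push the sign onto the boundary), but it diverges from the paper at two technical junctures and adds content the paper leaves unstated. To rule out $\mu_0=0$, the paper observes that if $v_*=\Psi_X|_D$ were a ground state it would satisfy both $v_*=0$ and the natural Neumann condition $(v_*)_X=0$ on the lateral segments, i.e.\ homogeneous Cauchy data, and then invokes unique continuation to force $v_*\equiv 0$; you instead use Green's formula (\ref{Okt31a}) to exhibit the lateral boundary pairing explicitly and kill it via the strict sign conditions $(u_*)_X(0,Y)>0$, $(u_*)_X(\Lambda/2,Y)<0$ from (\ref{Au16a})(ii). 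Your route is more concrete and avoids unique continuation, at the cost of leaning on nontrivial structural information about $\Psi_X$ that the paper's argument does not need. For constant sign, the paper takes $v_\pm=\max(0,\pm\Phi_0)$, shows each has zero Rayleigh deficit, and concludes by noting both would be $\mu_0$-eigenfunctions yet fail to be smooth (vanishing on an open set); your $|\Phi_0|$ plus strong maximum principle is the cleaner classical version of the same idea. You also fill in two things the paper's proof does not spell out: a separate argument for simplicity via the interior zero of $\Phi-c\Phi_0$, and an explicit treatment of boundary positivity using Hopf's boundary lemma on $S$ and even reflection across the lateral segments, together with the useful remark that these maximum-principle tools survive an indefinite zeroth-order coefficient because only non-negative solutions are involved (absorbing the negative part of $\omega'(\Psi)+\mu_0$ into the inequality).
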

\begin{proof} Let $v_*$ be the restriction of the function (\ref{Sept13b}) to the domain $D$. Clearly it is positive inside $D$,  belongs to $H^1_0(D)$ and $a(v_*,v_*)=0$. This implies that either $\mu_0<0$ or $\mu_0=0$ and the function $v_*$ is an eigenfunction  corresponding to $\mu_0$. The latest alternative is impossible. Indeed, if $v_*$ is an eigenfunction then $(v_*)_X=0$ for $X=0$ and for $X=\Lambda/2$. So $v_*$ has homogeneous Cauchy data and satisfies an elliptic equation. Hence $v_*$ is identically zero inside $D$. Thus $\mu_0<0$.

 Since
\begin{equation}\label{Okt10a}
\mu_0=\min_{w\in H^1_0(D), ||w||_0=1} a(w,w)
\end{equation}
the corresponding eigenfunction cannot change sign inside $D$. Indeed, if $\phi_0$ changes sign then we introduce two functions $v_\pm(X,Y)=\max (0,\pm v(X,Y))$. One can verify that $a(v_\pm,v_\pm)=\mu_0||v_\pm||^2_{L^2(D)}$. Therefore both functions $v_\pm$ deliver minimum in (\ref{Okt10a}) and hence they are also eigenfunctions corresponding to $\mu_0$ but this is impossible because they are not smooth. So $\Phi_0$ is positive or negative inside $D$.

Assume that $\Phi_0>0$ in $D$. Then it cannot vanish on the part of the boundary where $Y>0$. Indeed if $\Phi$ is zero there then the normal derivative is also zero and this leads to a chenge of sign inside $D$.
\end{proof}

The following Green's formula
 for the form $a$ will be useful in the proof of the nest proposition:
\begin{eqnarray}\label{Okt31a}
&&-\int_D(\Delta u+\omega'(\Psi)u)\overline{v}dXdY+\int_0^{\Lambda/2}(\partial_\nu u-\rho(X)u)\overline{v}dS\nonumber\\
&&=a(u,v)+\int_0^{\xi(0)}u_X\overline{v}|_{X=0}dY-\int_0^{\xi(\Lambda/2)}u_X\overline{v}|_{X=\Lambda/2}dY.
\end{eqnarray}
Here the function $u$ belongs to the space $H^2_0(D)$ consisting of functions from $H^2(D)$ vanishing for $Y=0$ and $v\in H^1_0(D)$.

\begin{proposition}\label{PF10a} Let $\xi$ be not identically constant. If $\mu_1=0$ then  $\mu_1=0$ is simple. Moreover the corresponding eigenfunction has exactly two nodal sets and the nodal line separating these nodal sets has one of its end points on the curve $S$.
\end{proposition}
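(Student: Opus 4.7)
My plan is to establish the three assertions in the order that makes the argument most natural: (a) exactly two nodal sets, (b) the separating nodal line meets $S$, and (c) simplicity of $\mu_1=0$. Throughout I rely on the positive function $u_*$ from (\ref{Sept13b}) together with Green's identity (\ref{Okt31a}), which convert sign information about an eigenfunction $\phi$ into boundary identities.

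For (a), let $\phi$ be any eigenfunction for $\mu_1=0$. Because $\phi$ is $L^2$-orthogonal to $\Phi_0$, which by Proposition~\ref{Pr1} is strictly positive inside $D$, $\phi$ must change sign and has at least two nodal domains $\Omega_\pm=\{\pm\phi>0\}$. Courant's nodal domain theorem applied to the second eigenvalue provides the matching upper bound, so there are exactly two; call their common internal boundary $\Gamma=\partial\Omega_+\cap D=\partial\Omega_-\cap D$.

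For (b), I apply the Green identity (\ref{Okt31a}) with $u=u_*$ and $v=\phi$ on $\Omega_-$. Since both functions satisfy the same elliptic equation and the same Robin condition on $S$, the volume and $S\cap\partial\Omega_-$ contributions vanish; on the bottom both functions vanish; on lateral portions of $\partial\Omega_-$ the Neumann condition $\phi_X=0$ combined with $u_*=0$ leaves only $\phi\,(u_*)_X$, and on $\Gamma$ only $u_*\,\partial_\nu\phi$ survives. Using the outward normal from $\Omega_-$, one has $\phi\le 0$ in $\overline{\Omega_-}$, $\partial_\nu\phi\ge 0$ on $\Gamma$, $u_*>0$ on $\Gamma\cap D$, and by (\ref{Au16a}) the signs of $(u_*)_X(0,Y)$ and $(u_*)_X(\Lambda/2,Y)$ are positive and negative respectively; every surviving boundary term then has the same definite sign. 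Assume for contradiction that $\Gamma$ does not touch $S$ (this covers both an arc with both endpoints off $S$ and a closed interior loop). Then $\partial\Omega_-\cap S=\emptyset$, and the sum of same-signed terms equals zero, forcing each to vanish individually. In particular, vanishing of $\int_\Gamma u_*\,\partial_\nu\phi\,dS$ with $u_*>0$ on $\Gamma\cap D$ gives $\partial_\nu\phi\equiv 0$ on $\Gamma\cap D$; combined with $\phi=0$, this produces trivial Cauchy data along a curve, so unique continuation forces $\phi\equiv 0$, the desired contradiction. Hence $\Gamma$ must reach $S$.

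For (c), assume for contradiction that the eigenspace has dimension at least two with basis $\phi_1,\phi_2$. The map $\phi\mapsto\phi|_S$ from the eigenspace into $C(S)$ is injective, since $\phi|_S\equiv 0$ combined with the Robin condition gives $\partial_\nu\phi\equiv 0$ on $S$, and trivial Cauchy data on the smooth curve $S$ forces $\phi\equiv 0$. The image is thus a two-dimensional space of smooth functions on $S$, each having at least one zero on $S$ by (b). Choosing $(\alpha,\beta)$ so that $\phi=\alpha\phi_1+\beta\phi_2$ vanishes at two prescribed points of $S$ yields an eigenfunction whose trace on $S$ has at least two sign changes. Tracking the nodal set into $D$, aided by the sign information on $(u_*)_X$ at the lateral sides which constrains how $\Gamma$ can attach there, forces either a third nodal domain (contradicting Courant) or a closed-loop component of $\Gamma$ (already excluded in step (b)). The main obstacle is precisely this last step: the Green-style balance in (b) is an equality of signed integrals rather than a strict inequality, so converting higher multiplicity into a contradiction requires careful bookkeeping of nodal arcs as $(\alpha,\beta)$ rotates within the two-dimensional eigenspace. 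The strict positivity of $(u_*)_X(0,Y)$ and strict negativity of $(u_*)_X(\Lambda/2,Y)$ from (\ref{Au16a}), together with the $C^{2,\alpha}$ regularity of $\phi$ on the smooth curve $S$, are what should preclude degenerate configurations such as tangencies or coalescing endpoints, and making this topological tracking precise is the technical heart of the proof.
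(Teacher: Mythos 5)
Your step (a) is fine: invoking Courant's nodal domain theorem for the second eigenvalue of this Robin/Neumann/Dirichlet mixed problem gives the same bound that the paper obtains by the explicit variational construction $u_1-\alpha u_2$. But steps (b) and (c) both have genuine problems.

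In step (b) the sign bookkeeping in the Green identity is wrong. On the piece of $\partial\Omega_-$ lying on $\{X=0\}$, the outward normal is $-\partial_X$; using $u_*(0,Y)=0$ and $\phi_X(0,Y)=0$, the Green integrand $u_*\partial_n\phi-\phi\partial_n u_*$ reduces to $\phi\,(u_*)_X$, which with $\phi\le 0$ on $\overline{\Omega_-}$ and $(u_*)_X(0,Y)>0$ is $\le 0$. The contribution from $\Gamma$, namely $\int_\Gamma u_*\partial_n\phi\,ds$, is $\ge 0$. These two surviving terms have \emph{opposite} signs, not the same sign, so their sum vanishing forces nothing; the argument only closes in the special case where $\partial\Omega_-$ is entirely $\Gamma$ (a closed interior loop) and fails when $\Gamma$ hits a lateral side with both endpoints off $S$. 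The paper avoids this entirely: it truncates the eigenfunction to the nodal domain that does not meet $S$ to get a nonzero $w\in H^1_0(D)$ with $w|_S=0$ and $a(w,w)=0$, directly contradicting the strict positivity $a(u,u)>0$ in (\ref{J27c}). That is both shorter and correct.

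In step (c) you yourself flag the gap, and it is real. Choosing $(\alpha,\beta)$ so that $\alpha\phi_1+\beta\phi_2$ vanishes at two prescribed points of $S$ is already a nontrivial constraint (it is a determinant condition on the pair of points), and even granted two zeros on $S$, concluding a third nodal domain or a closed loop requires the very topological control you say you cannot supply. The paper's argument is different and self-contained: from $\dim\mathcal{X}\ge 2$ it produces an eigenfunction $w_*$ vanishing at the corner $z_1=(0,\eta(0))$, so by (b) the nodal line runs from $z_1$ to a point $z_2\in S$. It then perturbs $w_*$ to $U=w_*+\beta v_*$ with $v_*=\Psi_X|_D>0$, uses the identity (\ref{Sept13aa}) coming from (\ref{Okt31a}) to show $a(U_1+\theta U_2,U_1+\theta U_2)=0$ with $(U_1+\theta U_2,\Phi_0)_{L^2}=0$, and concludes that $U_1+\theta U_2$ would be an eigenfunction for $\mu_1=0$ — contradicting its lack of smoothness across the new nodal line, exactly as in step (a). Replicating simplicity without some such device (perturbation by $v_*$, or another mechanism exploiting the corner point) leaves the proof incomplete.
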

\begin{proof} Denote by ${\mathcal X}$ the  space of real eigenfunctions corresponding to the eigenvalue $\mu_1=0$.  Since all eigenfunctions from ${\mathcal X}$ orthogonal to $\Phi_0$, each eigenfunction must change sign inside $D$. Let us show that $\dim {\mathcal X}=1$. The proof consists of several steps.

(a) First we show that every nonzero eigenfunction $w\in {\mathcal X}$ has exactly two nodal sets. Assume that there is an eigenfunction $w\in {\mathcal X}$ which has  more then two nodal sets, say $Y_j$, $j=1,\ldots, N$, $N>2$. Introduce the functions $u_j(X,Y)=w(X,Y)$ for $(X,Y)\in Y_j$ and zero otherwise. Then $u_j\in H^1_0(D)$ and
\begin{equation}\label{Sept13a}
a(u_j,u_k)=0\;\;\mbox{for all $k,j=1,\ldots,N$}.
\end{equation}
We choose the constant $\alpha$ such that
$$
(u_1-\alpha u_2,\Phi_0)_{L^2(D)}=0
$$
and observe that $a(u_1-\alpha u_2,u_1-\alpha u_2)=0$. Since
$$
\mu_1=\min a(w,w)
$$
where $\min$ is taken over $w\in H^1_0(D)$ satisfying $(w,\Phi_0)_{L^2(D)}=0$ and $||w||_{L^2(D)}=1$, the minimum is attained on the function $(u_1-\alpha u_2)/||u_1-\alpha u_2||_{L^2(D)}$. So we conclude that $u_1-\alpha u_2$ is an eigenfunction corresponding to the eigenvalue $\mu_1=0$, which is impossible since this function is zero on $Y_j$ for $j>2$.

  (b) Second, let us prove that the nodal line is not closed and one of its end points lies oh $S$.
  Consider an eigenfunction $w$ with two nodal sets and let $\gamma$ be the nodal line separating these two nodal sets.
  If this nodal line is closed then introduce the function $w_1$ which coincides with $w$ inside the closed nodal line and vanishes outside.
  Then $a(w_1,w_1)=0$ and $w_1=0$ on $S$ which contradicts to  the inequality (\ref{J27c}).
  If both end points of $\gamma$ lie outside $S$ then introduce $w_2$ which coincides with $w$ on the nodal set separated from $S$ and vanishes otherwise. Then $a(w_2,w_2)=0$ and $w_2=0$ on $S$ which again contradicts to the inequality (\ref{J27c}).
 Thus one of end points lies on $S$.

(c) Now we are in position to prove that $\dim {\mathcal X}=1$. If $\dim {\mathcal X}>1$ then there is an eigenfunction, say $w_*$ which is zero at the point $z_1=(0,\eta(0))$, which must be one of the end points of the nodal line separating two nodal sets. By b) another end-point $z_2$ of the nodal line lies on $S$. Denote by $Y_1$ the nodal set attached to the part of $S$ between $z_1$ and $z_2$. Let also $Y_2$ be the remaining nodal domain. We can assume that $w_*<0$ in $Y_1$ and $w_*>0$ in $Y_2$.

Let $v_*$ be the function introduced in Proposition \ref{Pr1}.  Using that both functions $w_*$ and $u_*$ satisfy the problem (\ref{J17ax}) but the first  one satisfies  $\partial_Xw_*=0$ for $X=0,\Lambda/2$ and the second is subject to $v_*=0$ for the same values of $x$, we have, by using (\ref{Okt31a}),
\begin{equation}\label{Sept13aa}
\int_0^{\xi(0)}\partial_Xv_*(0,Y)w_*(0,Y)dY=\int_0^{\xi(\Lambda/2)}\partial_Xv_*(\Lambda/2,Y)w_*(\Lambda/2,Y)dY.
\end{equation}

Consider the function
$$
U=w_*+\beta v_*,\;\;\mbox{where $\beta>0$}.
$$
Since $v_*$ is a positive function inside $D$, for small $\beta$ the function $U$ has also two nodal sets $\widetilde{Y}_1$ and $\widetilde{Y}_2$, separated by a nodal line $\widetilde{\gamma}$ with end points $z_1=(0,\eta(0))$ and $\widetilde{z}_2\in S$ which is close to $z_2$. We assume that the nodal set $\widetilde{Y}_1$ is attached to the part of $S$ between the points $z_1$ and $\widetilde{z}_2$. Introduce the functions
\begin{eqnarray*}
&&U_1(X,Y)=U(X,Y)\;\;\mbox{if $(X,Y)\in \widetilde{Y}_1$ and zero otherwise},\\
&&U_2(X,Y)=U(X,Y)\;\;\mbox{if $(X,Y)\in \widetilde{Y}_2$ and zero otherwise}.
\end{eqnarray*}
 We choose the constant $\theta$ such that
 \begin{equation}\label{Sept13ba}
 (U_1+\theta U_2,\Phi_0)_{L^2(D)}=0.
 \end{equation}
 It is clear that $\theta\neq 0$ because of
$$
a(U_1,\Phi_0)=\mu_0(U_1,\Phi_0)_{L^2(D)}\neq 0.
$$
Here we have used that $\Phi_0>0$ inside $D$ by Proposition \ref{Pr1}. Furthermore due to (\ref{Sept13aa}) and the fact that $w_*$ is an eigenfunction corresponding to $\mu_1=0$, we have
$$
a(U_1+\theta U_2,U_1+\theta U_2)=0.
$$
This together with (\ref{Sept13ba}) implies that $U_1+\theta U_2$ is an eigenfunction corresponding to the eigenvalues $\mu_1=0$ of the spectral problem (\ref{Sept8a}). But this contradicts to the smoothness properties of the function $U_1+\theta U_2$. (Compare with the argument used in a) and b)).
\end{proof}

\subsection{Auxiliary spectral problems}

In this section we introduce and study eigenvalues of several eigenvalue problems, which will be used in subsequent sections to estimate eigenvalues of "generalized" eigenvalue problem introduced in the next section. Zeros of  generalized eigenvalues will give us the subharmonic bifurcations in what follows.

Let us introduce the spaces
$$
H^1_{00*}(D)=\{u\in H^1_0(D)\,:\;u=0\;\mbox{for $X=0$}\},
$$
$$
H^1_{0*0}(D)=\{u\in H^1_0(D)\,:\;u=0\;\mbox{for $X=\Lambda/2$}\}
$$
and
$$
H^1_{000}(D)=\{u\in H^1_0(D)\,:\;u=0\;\mbox{for $X=0$ and $X=\Lambda/2$} \}.
$$
The following spectral problems will play important role in what follows. Find $\nu^{0*}$ and $u\in H^1_{00*}(D)$ satisfying
\begin{equation}\label{Ja24a}
a(u,v)=\nu^{0*}(u,v)_{L^2(D)}\;\;\mbox{for all $v\in H^1_{00*}(D)$}.
\end{equation}
Find $\nu^{*0}$ and $u\in H^1_{0*0}(D)$ satisfying
\begin{equation}\label{Ja24aa}
a(u,v)=\nu^{*0}(u,v)_{L^2(D)}\;\;\mbox{for all $v\in H^1_{0*0}(D)$}.
\end{equation}
Find $\nu^{00}$ and $u\in H^1_{000}(D)$ satisfying
\begin{equation}\label{Ja24ab}
a(u,v)=\nu^{00}(u,v)_{L^2(D)}\;\;\mbox{for all $v\in H^1_{000}(D)$}.
\end{equation}
We denote the eigenvalues of these problems by
$$
\nu^{0*}_0\leq \nu^{0*}_1\leq\cdots,
$$
$$
\nu^{*0}_0\leq \nu^{*0}_1\leq\cdots
$$
and
$$
\nu^{00}_0\leq \nu^{00}_1\leq\cdots,
$$
where the multiplicity is taken into account. We note that the spectral problem considered in Sect. \ref{SJ24} represents the fourth problem in this list of spectral problems, where no restrictions for $X=0$ and $X=\Lambda/2$ are given.

\begin{proposition}\label{P22} The following properties are valid:
\begin{equation}\label{F1a}
\nu^{0*}_0<0,\;\;\nu^{*0}_0<0,\;\;\nu^{00}_0=0
\end{equation}
and
\begin{equation}\label{F1aa}
\nu^{0*}_1>\mu_1,\;\;\nu^{*0}_1>\mu_1,\;\;\nu^{00}_1>0.
\end{equation}
\end{proposition}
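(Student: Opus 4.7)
The proposition splits into the sign statements (\ref{F1a}) for the first eigenvalues and the strict gaps (\ref{F1aa}) for the second ones; in both parts the central tool is the function $u_*=\Psi_X|_D$ from (\ref{Sept13b}) together with the positivity/signed-derivative properties (\ref{Au16a}). The starting observation is that applying Green's identity (\ref{Okt31a}) to $u_*$, which solves (\ref{Okt21a}), yields
\[
a(u_*,v)=-\int_0^{\xi(0)}(u_*)_X v\big|_{X=0}\,dY+\int_0^{\xi(\Lambda/2)}(u_*)_X v\big|_{X=\Lambda/2}\,dY
\]
for any admissible $v$, and in particular $a(u_*,u_*)=0$.

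For (\ref{F1a}) the plan runs as follows. In the space $H^1_{000}(D)$ both boundary terms above vanish, so $u_*$ is an eigenfunction of the $H^1_{000}$-problem with eigenvalue $0$, yielding $\nu^{00}_0\le 0$; combined with $u_*>0$ in $D$ from (\ref{Au16a})(i), the positivity/orthogonality argument of Proposition \ref{Pr1} rules out a strictly more negative eigenvalue (any first eigenfunction would also be positive, and then could not be orthogonal to $u_*$), so $\nu^{00}_0=0$. For $\nu^{0*}_0$, the trial function $u_*\in H^1_{00*}(D)$ gives $\nu^{0*}_0\le 0$; to upgrade to strict inequality the plan is to test against $v\in H^1_{00*}(D)$ with $v(\Lambda/2,\cdot)>0$, so that by the displayed formula and $(u_*)_X(\Lambda/2,Y)<0$ from (\ref{Au16a})(ii) we have $a(u_*,v)<0$, and then
\[
\frac{a(u_*+\epsilon v,u_*+\epsilon v)}{\|u_*+\epsilon v\|^2_{L^2(D)}}=\frac{2\epsilon\, a(u_*,v)+O(\epsilon^2)}{\|u_*\|^2_{L^2(D)}+O(\epsilon)}<0
\]
for small $\epsilon>0$. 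The inequality $\nu^{*0}_0<0$ is entirely symmetric, using $(u_*)_X(0,Y)>0$ from (\ref{Au16a})(ii) instead.

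For $\nu^{00}_1>0$ in (\ref{F1aa}) the plan is a decomposition argument in the spirit of Proposition \ref{Pr1}: if $\nu^{00}_1=0$, the kernel of the $H^1_{000}$-problem contains some $\phi\perp u_*$ in $L^2(D)$, and $\phi$ must change sign because $u_*>0$. Splitting $\phi=\phi_+-\phi_-$ in $H^1_{000}(D)$ with $\phi_\pm=\max(\pm\phi,0)$, the essential disjointness of the supports gives $a(\phi_+,\phi_-)=0$, hence $0=a(\phi,\phi)=a(\phi_+,\phi_+)+a(\phi_-,\phi_-)$; each summand is $\ge 0$ by $\nu^{00}_0=0$, so both vanish, making each $\phi_\pm$ a minimizer of the Rayleigh quotient over $H^1_{000}(D)$ and hence a weak eigenfunction at $0$. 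Elliptic regularity would then force $\phi_\pm$ to be $C^{2,\alpha}$ in $D$, which is impossible because $\phi_\pm$ is only Lipschitz with a kink along the nodal line of $\phi$.

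The delicate step is $\nu^{0*}_1>\mu_1$ (and the symmetric $\nu^{*0}_1>\mu_1$); the plan is a Robin interpolation on the lateral segment $\{X=0,\,0<Y<\xi(0)\}$. For $\tau\ge 0$ set
\[
a_\tau(u,v):=a(u,v)+\tau\int_0^{\xi(0)}u(0,Y)v(0,Y)\,dY,\qquad u,v\in H^1_0(D),
\]
whose eigenvalues $\lambda_j(\tau)$ satisfy $\lambda_j(0)=\mu_j$, $\lambda_j(\tau)\to\nu^{0*}_j$ as $\tau\to\infty$ by the standard Robin-to-Dirichlet convergence, and are non-decreasing in $\tau$ because $a_\tau$ is. To see strict monotonicity, suppose $\lambda_j(\tau_1)=\lambda_j(\tau_2)$ for some $0\le\tau_1<\tau_2$; testing the $\tau_1$-min-max against the subspace $V_2=\operatorname{span}(\phi_0(\tau_2),\ldots,\phi_j(\tau_2))$ of eigenfunctions at $\tau_2$ forces $\phi_j(\tau_2)(0,Y)\equiv 0$. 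The Robin condition $(\phi_j)_X=\tau_2\phi_j$ at $X=0$ then gives $(\phi_j)_X(0,Y)\equiv 0$ as well, so the Cauchy data of $\phi_j(\tau_2)$ on $\{X=0,\,0<Y<\xi(0)\}$ vanish and elliptic unique continuation yields $\phi_j(\tau_2)\equiv 0$, a contradiction. Hence $\mu_j=\lambda_j(0)<\nu^{0*}_j$ for every $j$, in particular for $j=1$. The main obstacle is this last piece: carrying out the min-max argument cleanly at eigenvalue crossings of the family $\lambda_j(\tau)$ (handled by choosing $V_2$ as above so that the max-achiever is a genuine eigenfunction) and justifying unique continuation for the operator $\Delta+\omega'(\Psi)+\lambda$ from the lateral segment $\{X=0,\,0<Y<\xi(0)\}$, which requires the $C^{2,\alpha}$-regularity of $\xi$ and $\Psi$ available along the Stokes branch.
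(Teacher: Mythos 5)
Your proof is correct, and it leans on the same basic objects as the paper's — the function $u_*=\Psi_X$, the Green identity (\ref{Okt31a}), and the boundary signs (\ref{Au16a}) — but it deploys them in a genuinely different way in two places, so a comparison is in order. For $\nu^{00}_0=0$ and $\nu^{00}_1>0$ you argue exactly as the paper does (positive eigenfunction $\Rightarrow$ ground state; nodal decomposition $\Rightarrow$ simplicity), so no difference there. For the strict sign inequalities $\nu^{0*}_0<0$, $\nu^{*0}_0<0$, the paper's terse ``compare with the proof of Proposition~\ref{Pr1}'' amounts to a contradiction argument: if $\nu^{0*}_0=0$ then $u_*$ would minimize the Rayleigh quotient over $H^1_{00*}(D)$, hence be an eigenfunction satisfying the natural Neumann condition $\partial_X u_*=0$ on $\{X=\Lambda/2\}$, which together with $u_*(\Lambda/2,\cdot)=0$ gives vanishing Cauchy data and forces $u_*\equiv 0$. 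Your argument is instead constructive: you perturb $u_*$ by a function $v\in H^1_{00*}(D)$ with $v(\Lambda/2,\cdot)>0$ and use $(u_*)_X(\Lambda/2,Y)<0$ from (\ref{Au16a})(ii) to make the Rayleigh quotient of $u_*+\epsilon v$ strictly negative. Both are correct; yours avoids unique continuation for this step and is in the same spirit as the use of (\ref{Sept13aa}) in the paper's proof of Proposition~\ref{PF10a}. For $\nu^{0*}_1>\mu_1$ (and its mirror) the difference is larger: the paper's route (implicit in ``compare with Pr1'') is to intersect $\mathrm{span}(\phi^{0*}_0,\phi^{0*}_1)\subset H^1_0(D)$ with $\Phi_0^\perp$ to produce a candidate $\mu_1$-eigenfunction of the Neumann problem with vanishing Cauchy data on $\{X=0\}$; you instead build the full Robin family $a_\tau$, prove strict monotonicity of each $\lambda_j(\tau)$ by a min--max comparison plus unique continuation, and invoke Robin-to-Dirichlet convergence. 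Your route is valid but substantially heavier (and ultimately still relies on unique continuation to get strictness, so it does not buy a simplification); what it does buy is the full family of strict inequalities $\mu_j<\nu^{0*}_j$ for every $j$, rather than just $j=1$. One small imprecision worth flagging: in the strict-monotonicity step what your min--max comparison forces to vanish on $\{X=0\}$ is some maximizer $v_0\in\mathrm{span}(\phi_0(\tau_2),\ldots,\phi_j(\tau_2))$ — which is then seen to lie in the $\lambda_j(\tau_2)$-eigenspace and hence be an eigenfunction — not necessarily $\phi_j(\tau_2)$ itself; the conclusion (zero Cauchy data, hence $v_0\equiv 0$, contradiction) is unaffected, but the statement as written overclaims.
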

\begin{proof} The function $v_*$ from Proposition \ref{Pr1} satisfies the problem (\ref{Ja24ab}) with $\nu^{00}=0$ and it is positive inside $D$. Therefore $\nu^{00}$ is the lowest eigenvalue of (\ref{Ja24ab}) and we arrive at the last relation in (\ref{F1a}). Since
\begin{equation}\label{F1b}
H^1_{000}(D)\subset H^1_{00*}(D)\;\;\mbox{and}\;\;H^1_{000}(D)\subset H^1_{0*0}(D),
\end{equation}
we obtain the first two inequalities in (\ref{F1a}), compare with the proof of Proposition \ref{Pr1}.

 Inclusions
\begin{equation}\label{F1ba}
H^1_{00*}(D)\subset H^1_{0}(D)\;\;\mbox{and}\;\;H^1_{0*0}(D)\subset H^1_{0}(D)
\end{equation}
together with (\ref{F1b}) implies the inequalities (\ref{F1aa}).
\end{proof}

\section{Generalized eigenvalue problem}\label{S26a}

To study subharmonic and more general bifurcations  a more general eigenvalue problem, which includes quasi momentum as a parameter, is needed. This section is devoted to this spectral problem.

As it is known (see, for example \cite{KT}, \cite{Na})  all bounded solutions to the problem
\begin{eqnarray}\label{Sept11a}
&&\Delta w+\omega'(\Psi)w+\mu w=0 \;\;\mbox{in ${\mathcal D}_\xi$},\nonumber\\
&&\partial_\nu w-\rho w=0\;\;\mbox{on ${\mathcal S}_\xi$},\nonumber\\
&&w=0\;\;\mbox{for $y=0$},
\end{eqnarray}
with real $\mu$, are described by
$$
w(X,Y)=\sum_ja_je^{i\tau_jX}w_j(X,Y),
$$
where $\tau_j\in [0,\tau_*)$ and $w_j$ is a $\Lambda$-periodic solution to the problem
\begin{eqnarray}\label{Sept7a}
&&(\partial_X+i\tau)^2w+\partial_Y^2w+\omega'(\Psi)w+\mu w=0 \;\;\mbox{in ${\mathcal D}_\xi$},\nonumber\\
&&N(\tau) w-\rho w=0\;\;\mbox{on ${\mathcal S}_\xi$},\nonumber\\
&&w=0\;\;\mbox{for $Y=0$},
\end{eqnarray}
where
$$
N(\tau)w=e^{-i\tau X}\partial_\nu (e^{i\tau X}w).
$$
The number $\tau$ is called quasi momentum. It is not assumed in this consideration that $w$ is even.
By \cite{ShaSo} the number of such $\tau_j$  is finite for a fixed $\mu$.
In this section we prove important estimates for eigenvalues of the generalized eigenvalue problem.

\subsection{Variational formulation}

The problem (\ref{Sept7a}) is a spectral problem for a self-adjoint operator for every real $\tau$.
To give its variational formulation we introduce
$$
\Omega=\{(X,Y)\,:\,-\Lambda/2<X<\Lambda/2,\;0<Y<\xi(X)\}
$$
and denote by $H^1_{0,p}(\Omega)$ the subspace of all function $u$ in $H^1(\Omega)$, which satisfies $u(-\Lambda/2,Y)=u(\Lambda/2,Y)$ for $Y\in (0,\xi(\Lambda/2)$ and $u(X,0)=0$ for $X\in (-\Lambda/2,\Lambda/2)$.
We put
\begin{equation}\label{Sept7aa}
{\bf a}(u,v;\tau)=\int_{\Omega}\Big((\partial_X+i\tau) u\overline{(\partial_X+i\tau)v}+\partial_Yu\partial_Y\overline{v}-\omega'(\Psi)u\overline{v}\Big)dXdY-\int_{-\Lambda/2}^{\Lambda/2}\rho(X)u\overline{v}dS,
\end{equation}
which can be written also as
\begin{equation*}
{\bf a}(u,v;\tau)={\bf a}(u.v)+i\tau {\bf b}(u,v)+\tau^2{\bf c}(u,v),
\end{equation*}
where
\begin{equation*}
{\bf a}(u,v)=a_\Omega(u,v),\;\;{\bf b}(u,v)=\int_\Omega (u\partial_X\overline{v}-\partial_Xu\overline{v})dXdY,\;\;{\bf c}(u,v)=(u,v)_{L^2(\Omega)}.
\end{equation*}
Consider the spectral problem: find $w\in H^1_{0p}(\Omega)$ and $\mu\in\Bbb R$ satisfying
\begin{equation}\label{Nov1b}
{\bf a}(w,V;\tau)=\widehat{\mu}(\tau)(\phi,V)_{L^2(\Omega)}\;\;\mbox{for all $V\in H^1_{0,p}(\Omega)$}.
\end{equation}
This is a variational formulation of the spectral problem (\ref{Sept7a}).
Note that we do not suppose in this formulation the functions $w$ and $V$ are even or odd, both of them are only periodic.
We will consider the form ${\bf a}$ only for real $\tau$. In this case the form is symmetric and the spectrum consists of isolated real eigenvalues of finite multiplicity.

Denote by $\widehat{\mu}_j(\tau)$, $\tau\in\Bbb R$, the eigenvalues of the problem
 (\ref{Sept7a})  (equivalently (\ref{Nov1b})) numerated according to the increasing order
 $$
 \widehat{\mu}_0(\tau)\leq \widehat{\mu}_1(\tau)\leq\cdots
 $$
 Clearly
 \begin{equation*}
 \widehat{\mu}_0(0)=\mu_0,\;\;\widehat{\mu}_1(0)=\min (\mu_1,0),\;\;\widehat{\mu}_2(0)=\min(\max (\mu_1,0),\mu_2,\nu^{00}_1)
 \end{equation*}
 and
 \begin{equation}\label{M16a}
 \widehat{\mu}_3(0)>0\;\;\mbox{if $\mu_2>0$}.
 \end{equation}
   Here we use that the first and second eigenvalues of the problem (\ref{J17ax}), considered for $\Lambda$-periodic odd functions coincide with $\nu^{00}_0$ and $\nu^{00}_1$ respectively.

There is another variational formulation. We put  $w=e^{-i\tau X}\phi$ in (\ref{Nov1b}) and $V=e^{-i\tau X}v$, $v\in H^1_0(\Omega,\tau)$, where
$$
H^1_0(\Omega,\tau)=\{v\in H^1_0(\Omega)\,:\,v(-\Lambda/2,Y)=e^{-i\tau\Lambda}v(\Lambda/2,Y)\}.
$$
Then the spectral problem (\ref{Nov1b}) is equivalent to: find $\widehat{\mu}(\tau)$ and $\phi\in H^1_{0}(\Omega,\tau)$ satisfying
\begin{equation}\label{Nov1bb}
a(\phi,v)=\widehat{\mu}(\tau)(\phi,v)_{L^2(\Omega)}\;\;\mbox{for all $v\in H^1_{0}(\Omega,\tau)$}.
\end{equation}

\subsection{Estimates of the eigenvalues $\widehat{\mu}_j$.}

We will need some estimate for the function $\widehat{\mu}_j(\tau)$. For this purpose we introduce 
 two auxiliary spectral problems. Let $H^1_{00}(\Omega)$ consists of functions from $H^1(\Omega)$ which vanish for $Y=0$ and $x=\pm\Lambda/2$.

The first spectral problem is to find $\mu_D\in [0,\tau_*)$ and $\phi\in H^1_{00}(\Omega)$ such that
\begin{equation}\label{J6b}
a(\phi,v)=\mu_D (\phi,v)_{L^2(\Omega)}\;\;\mbox{for all $v\in H^1_{00}(\Omega)$}.
\end{equation}
The second one is to find $\mu_N\in [0,\tau_*)$ and $\phi\in H^1_{0}(\Omega)$ such that
\begin{equation}\label{J6bc}
a(\phi,v)=\mu_N (\phi,v)_{L^2(\Omega)}\;\;\mbox{for all $v\in H^1_{0}(\Omega)$}.
\end{equation}
We denote by $\{\mu_{Dj}\}$ and $\{\mu_{Nj}\}$, $j=0,1,\ldots$, the eigenvalues of the problems (\ref{J6b}) and (\ref{J6bc}) respectively numerated according to
$$
\mu_{D0}\leq \mu_{D1}\leq\cdots,\;\;\mu_{N0}\leq \mu_{N1}\leq\cdots,
$$
where the multiplicity is taken into account.
Since
\begin{equation}\label{Ja16a}
H^1_{00}(\Omega)\subset H^1_{0}(\Omega,\tau)\subset H^1_{0}(\Omega),
\end{equation}
we conclude
\begin{equation}\label{Ja11a}
\mu_{Nj}\leq\widehat{\mu}_j(\tau)\leq\mu_{Dj},\;\;j=0,1,\ldots
\end{equation}
Note that both eigenvalues $\mu_{Dj}$ and $\mu_{Nj}$ do not depend on $\tau$.

In the following lemma we show that all inequalities (\ref{Ja11a}) are strong

\begin{lemma}\label{L25} Let $\xi$ be not identically constant. Then the following inequalities hold:
\begin{equation}\label{Ja14aa}
\mu_{Nj}< \widehat{\mu}_j(\tau)< \mu_{Dj},\;\;j=0,1,\ldots
\end{equation}
for $\tau\in (0,\tau_*)$.
\end{lemma}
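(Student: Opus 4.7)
My plan is to argue both strict inequalities by contradiction, using the min-max principle together with an elliptic Cauchy-uniqueness / reflection argument.

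For the upper bound, suppose $\widehat{\mu}_j(\tau)=\mu_{Dj}=:\lambda$ for some $\tau\in(0,\tau_*)$. Since $H^1_{00}(\Omega)\subset H^1_0(\Omega,\tau)$, the $(j+1)$-dimensional span $V_D$ of the first $j+1$ Dirichlet eigenfunctions has Rayleigh ceiling equal to $\lambda$ and therefore realises the Courant--Fischer min-max for $\widehat{\mu}_j(\tau)$. Expanding $V_D$ in the orthonormal $(\Omega,\tau)$-eigenbasis and using the orthogonality induced by the quadratic form $a-\lambda(\cdot,\cdot)_{L^2(\Omega)}$, one finds that $\phi_{D,j}$ itself must be an eigenfunction of the $(\Omega,\tau)$-problem at eigenvalue $\lambda$. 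Testing (\ref{Nov1bb}) against elements of $H^1_0(\Omega,\tau)$ that do not vanish on $X=\pm\Lambda/2$ then produces, after integration by parts, the transmission identity
\[
\partial_X\phi_{D,j}(\Lambda/2,Y)=e^{i\tau\Lambda}\partial_X\phi_{D,j}(-\Lambda/2,Y).
\]

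Next I would exhibit two extensions of $\phi_{D,j}$ past $X=\Lambda/2$. The first is the quasi-periodic extension $\tilde\phi(X+\Lambda,Y)=e^{i\tau\Lambda}\tilde\phi(X,Y)$, which is $C^1$ across the seam by the above transmission identity and hence solves the PDE strongly. The second is the odd reflection $\phi^{\mathrm{odd}}(X,Y):=-\phi_{D,j}(\Lambda-X,Y)$, a solution of the same PDE in the reflected cell because $\Psi$ is even and $\Lambda$-periodic, hence symmetric about $X=\Lambda/2$ (and so are $\omega'(\Psi)$ and $\rho$). Both extensions share the Cauchy data $(0,\partial_X\phi_{D,j}(\Lambda/2,Y))$ at $X=\Lambda/2$; by uniqueness of the Cauchy problem for the elliptic operator they coincide, yielding $\phi_{D,j}(-Z,Y)=-e^{i\tau\Lambda}\phi_{D,j}(Z,Y)$. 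One more iteration gives $(1-e^{2i\tau\Lambda})\phi_{D,j}\equiv 0$, so for $\tau\in(0,\tau_*)$ with $\tau\neq\tau_*/2$ we obtain $\phi_{D,j}\equiv 0$, a contradiction. The lower bound $\mu_{Nj}<\widehat{\mu}_j(\tau)$ is symmetric: the putative common eigenfunction is a Neumann eigenfunction satisfying $\partial_X\phi=0$ at $X=\pm\Lambda/2$, and matching the even reflection $\phi(\Lambda-X,Y)$ with the quasi-periodic extension at $X=\Lambda/2$ (shared Cauchy data $(\phi(\Lambda/2,Y),0)$) again produces the collapse $(1-e^{2i\tau\Lambda})\phi\equiv 0$.

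The main obstacle is the distinguished value $\tau=\tau_*/2$, at which $e^{2i\tau\Lambda}=1$ and the reflection/quasi-periodic identifications hold automatically: the argument then only forces the common eigenfunction to be even (upper inequality) or odd (lower inequality) in $X$, exhibiting it as an $\nu^{*0}$- or $\nu^{0*}$-type eigenfunction on $D$. To close this case I would supplement the reflection identity with a parity and multiplicity count using the strict spectral inequalities of Propositions \ref{Pr1}, \ref{PF10a} and \ref{P22} (in particular $\nu^{00}_0=0$ realised by $v_*$, together with $\nu^{00}_1>0$ and $\nu^{0*}_1,\nu^{*0}_1>\mu_1$) and the sign and positivity properties of $v_*=\Psi_X$ from (\ref{Au16a}), which together encode the asymmetry of $D$ about $X=\Lambda/4$ forced by the hypothesis that $\xi$ is not identically constant.
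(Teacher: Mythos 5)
Your argument and the paper's are essentially the same: both reduce the putative equality $\widehat{\mu}_j(\tau)=\mu_{Dj}$ (or $=\mu_{Nj}$), via the Courant--Fischer characterisation and a subspace decomposition, to the existence of a function that is simultaneously a Dirichlet-in-$X$ (resp.\ Neumann-in-$X$) eigenfunction and a $\tau$-quasiperiodic eigenfunction; the paper's terse ``consider $\widehat\Phi(X,Y)\pm\widehat\Phi(-X,Y)$ \dots\ homogeneous Cauchy data'' step is precisely your reflection/quasi-periodic-extension argument unfolded, with the matching of Cauchy data at $X=\Lambda/2$ replacing the paper's appeal to even/odd parity plus the condition $\tau\in(0,\tau_*)$.

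What you identify correctly, and explicitly, is that the mechanism degenerates at $\tau=\tau_*/2$: there $e^{2i\tau\Lambda}=1$, the iteration $\phi(-Z,Y)=-e^{i\tau\Lambda}\phi(Z,Y)$ gives no contradiction, and the same collapse occurs in the paper's version (for an \emph{even} Dirichlet eigenfunction $\widehat\Phi$ the natural boundary identity $(1+e^{i\tau\Lambda})\partial_X\widehat\Phi(\Lambda/2,\cdot)=0$ is vacuous at $\tau=\tau_*/2$, so the claim ``$\tau\in(0,\tau_*)$ implies $\partial_X\Phi=0$ at $X=\pm\Lambda/2$'' is not actually justified there). The paper elides this case; you flag it honestly. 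However, your proposed repair is only a sketch, and it is not clear it closes the gap. At $\tau=\tau_*/2$ the claim $\widehat{\mu}_j(\tau_*/2)<\mu_{Dj}$ is equivalent to a strict interlacing of the sorted union $\{\nu^{*0}_k\}\cup\{\nu^{0*}_k\}$ against $\{\nu^{*0}_k\}\cup\{\nu^{00}_k\}$; entrywise $\nu^{0*}_k<\nu^{00}_k$ does \emph{not} by itself force strict inequality of the merged sequences (the shared even sequence $\{\nu^{*0}_k\}$ can saturate the bound — for instance at $j=0$ one needs $\nu^{0*}_0<\nu^{*0}_0$, a comparison between two non-nested mixed boundary-value problems which neither the paper nor your sketch establishes; at the uniform stream $t=0$ the two first eigenvalues coincide by the $X\mapsto\Lambda/2-X$ symmetry, and the sign of the split for small $t>0$ is not determined by the properties of $v_*=\Psi_X$ that you cite). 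So the $\tau=\tau_*/2$ case remains genuinely open in your write-up — but this is a weakness shared with the paper rather than one introduced by you, and your formulation at least makes the gap visible.
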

\begin{proof} First, we assume  that $n$ satisfies
\begin{equation}\label{Ja16aa}
\mu_{Dn}>\mu_{D(n-1)}
\end{equation}
and prove the  inequalities in right-hand side of (\ref{Ja14aa}). 
Let $\Phi_j\in H^1_{00}(\Omega)$ be the eigenfunction of (\ref{J6b}) corresponding to $\mu_{Dj}$, $j=0,\ldots$, and let $X_n$ be the space of linear combinations of $\{\Phi_j\}_{j=0}^{n-1}$ and
$$
{\mathcal X}_n=\{\Phi\in H^1_{00}(\Omega)\,:\,(\Phi,\Phi_j)_{L^2(\Omega)},\,j=0,\ldots,n-1\}.
$$
Then
$$
\mu_{Dn}=\min_{\Phi\in X_n}\frac{a(\Phi,\Phi)}{||\Phi||_{L^2(\Omega)}^2}.
$$
We assume here and in what follows that $\Phi\neq 0$  in similar relations.

Introduce the subspace
$$
{\mathcal Y}_n={\mathcal Y}_n(\tau)=\{\Phi\in H^1_{0}(\Omega,\tau)\,:\,a(\Phi,\Phi_j)-\mu(\Phi,\Phi_j)=0,\,j=0,\ldots,n-1\},
$$
where we use  the short notation $\mu=\mu_{Dn}$.
The relation ${\mathcal X}_n\subset{\mathcal Y}_n$, follows from
$$
a(\Phi,\Phi_j)-\mu (\Phi,\Phi_j)_{L^2(\Omega)}=(\mu_{Dj}-\mu) (\Phi,\Phi_j)_{L^2(\Omega)}=0\;\;\mbox{for $\Phi\in{\mathcal X}_n$, $j=0,\ldots,n-1$}.
$$
 To show that the codimension of ${\mathcal Y}_n$ is $n$, we assume that there exists
$$
\Phi_*=\sum_{j=0}^{n-1}c_j\Phi_j
$$
such that
$$
a(\Phi,\Phi_*)-\mu(\Phi,\Phi_*)_{L^2(\Omega)}=0\;\; \mbox{for all $\Phi\in H^1_0(\Omega,\tau)$}.
$$
Due to (\ref{Ja16aa})
$$
a(\Phi_*,\Phi_*)-\mu ||\Phi_*||^2_{L^2(\Omega)}<0
$$
 if one of coefficient $c_j$ is non-zero and hence co-dimension of ${\mathcal Y}_n$ is $n$. Let us also prove that every $u\in H^1_0(\Omega,\tau)$ admits the representation $u=\Phi+\phi$, where $\Phi\in {\mathcal Y}_n$ and $\phi\in X_n$. Indeed, we choose $c_j$ and $\phi$ according to
 $$
 c_j(\mu_{Dj}-\mu)||\Phi_j||^2_{L^2(\Omega)}=a(u,\Phi_j)-\mu(u,\Phi_j)_{L^2(\Omega)},\;\;\phi=\sum_{j=0}^{n-1}c_j\Phi_j.
 $$
 Then
 $$
 a(\Phi,\Phi_j)-\mu(\Phi,\Phi_j)_{L^2(\Omega)}=a(u,\Phi_j)-\mu(u,\Phi_j)_{L^2(\Omega)}+a(\phi,\Phi_j)+\mu(\phi,\Phi_j)_{L^2(\Omega)}=0.
 $$
 Therefore $\Phi\in {\mathcal Y}_n$.

By the  min-max principle for eigenvalues of (\ref{Nov1bb}):
\begin{equation}\label{Ja15a}
\widehat{\mu}_{n}\leq \min_{\Phi\in {\mathcal Y}_n}\frac{a(\Phi,\Phi)}{||\Phi||_{L^2(\Omega)}^2},
\end{equation}
If this inequality is strong then the right inequality in (\ref{Ja14aa}) is proved. Assume that
\begin{equation}\label{Ja15aa}
\widehat{\mu}_{n}= \min_{\Phi\in {\mathcal Y}_n}\frac{a(\Phi,\Phi)}{||\Phi||_{L^2(\Omega)}^2}.
\end{equation}
Then the minimum is attained at a certain function $\widehat{\Phi}\in {\mathcal Y}_n$ and
\begin{equation}\label{Ja15b}
a(\widehat{\Phi},v)=\mu (\widehat{\Phi},v)_{L^2(\Omega)}\;\;\mbox{for all $v\in {\mathcal Y}_n$}.
\end{equation}
Using the definition of ${\mathcal Y}_n$, we  conclude that
\begin{equation*}
a(\widehat{\Phi},v+g)-\mu (\widehat{\Phi},v+g)_{L^2(\Omega)}=0\;\;\mbox{for all $g\in X_n$}.
\end{equation*}
 Therefore $\widehat{\Phi}\in {\mathcal Y}_n$ is an eigenfunction of (\ref{Nov1bb}) and (\ref{J6b}) with $\mu=\mu_{Dn}$ considered in the space $H^1_0(\Omega,\tau)$ and $H^1_{00}(\Omega)$.
Considering the functions $\widehat{\Phi}(X,Y)+\widehat{\Phi}(-X,Y)$ and $\widehat{\Phi}(X,Y)-\widehat{\Phi}(-X,Y)$, which are even and odd respectively and using that $\tau\in (0,\tau_*)$ (this implies that $\partial_X\Phi=0$ for $X=\pm\Lambda/2$), we conclude that $\widehat{\Phi}=0$, since there is a homogeneous Cauchy data at $X=\pm\Lambda/2$ for the function $\widehat{\Phi}$.  Thus the right inequality in (\ref{Ja14aa}) is strong.

Consider the case when (\ref{Ja16aa}) is not valid. Then we choose $l$ such that
$$
\mu_{Dn}=\cdot=\mu_{D(n-l)}>\mu_{D(n-l-1)}.
$$
By just proved, $\widehat{\mu}_{n-l}(\tau)<\mu_{D(n-l)}$, which implies $\widehat{\mu}_{n}(\tau)<\mu_{D(n)}$.

Let us turn to the left inequality in (\ref{Ja14aa}).
Now, we assume  that $n$ satisfies
\begin{equation}\label{Ja19aa}
\widehat{\mu}_{n}>\widehat{\mu}_{n-1}.
\end{equation}
We represent $H^1_0(\Omega)$ as
$$
H^1_0(\Omega)={\mathcal X}+{\mathcal Y},
$$
where ${\mathcal X}$ consists of even functions in $H^1_0(\Omega)$ and ${\mathcal Y}$ consists of odd functions in $H^1_0(\Omega)$.

Let $\Phi_j\in H^1_{0}(\Omega,\tau)$ be the eigenfunction of (\ref{J6b}) corresponding to $\mu_{Dj}$, $j=0,\ldots$, and let now $X_n$ be the space of linear combinations of $\{\Phi_j\}_{j=0}^{n-1}$ and
$$
{\mathcal X}_n(\tau)=\{\Phi\in H^1_{0}(\Omega,\tau)\,:\,(\Phi,\Phi_j)_{L^2(\Omega)},\,j=0,\ldots,n-1\}.
$$
Then
$$
\widehat{\mu}_{n}=\min_{\Phi\in X_n}\frac{a(\Phi,\Phi)}{||\Phi||_{L^2(\Omega)}^2}.
$$

Introduce the subspace
$$
{\mathcal Y}_n=\{\Phi\in H^1_{0}(\Omega)\,:\,a(\Phi,\Phi_j)-\mu(\Phi,\Phi_j)=0,\,j=0,\ldots,n-1\},
$$
where we use  the short notation $\mu=\mu_{Dn}$.
The relation ${\mathcal X}_n\subset{\mathcal Y}_n$, follows from
$$
a(\Phi,\Phi_j)-\mu (\Phi,\Phi_j)_{L^2(\Omega)}=(\mu_{Dj}-\mu) (\Phi,\Phi_j)_{L^2(\Omega)}=0\;\;\mbox{for $\Phi\in{\mathcal X}_n$, $j=0,\ldots,n-1$}.
$$
In the same way as before one can show that the codimension of ${\mathcal Y}_n$ is equal to $n$ and
that every $u\in H^2_0(\Omega,\tau)$ admits the representation $u=\Phi+\phi$, where $\Phi\in {\mathcal Y}_n$ and $\phi\in X_n$.

 By the  min-max principle for eigenvalues of (\ref{Nov1bb}):
\begin{equation}\label{Ja15ax}
\widehat{\mu}_{n}\leq \min_{\Phi\in {\mathcal Y}_n}\frac{a(\Phi,\Phi)}{||\Phi||_{L^2(\Omega)}^2},
\end{equation}
If this inequality is strong then the right inequality in (\ref{Ja14aa}) is proved. Assume that
\begin{equation}\label{Ja15aax}
\widehat{\mu}_{n}= \min_{\Phi\in {\mathcal Y}_n}\frac{a(\Phi,\Phi)}{||\Phi||_{L^2(\Omega)}^2}.
\end{equation}
Then the minimum is attained at a certain function $\widehat{\Phi}\in {\mathcal Y}_n$ and
\begin{equation*}
a(\widehat{\Phi},v)=\mu (\widehat{\Phi},v)_{L^2(\Omega)}\;\;\mbox{for all $v\in {\mathcal Y}_n$}.
\end{equation*}
Using the definition of ${\mathcal Y}_n$, we  conclude that
\begin{equation*}
a(\widehat{\Phi},v+g)-\mu (\widehat{\Phi},v+g)_{L^2(\Omega)}=0\;\;\mbox{for all $g\in X_n$}.
\end{equation*}
 Therefore $\widehat{\Phi}\in {\mathcal Y}_n$ is an eigenfunction of (\ref{Nov1bb}) and (\ref{J6b}) with $\mu=\widehat{\mu}_n$ considered in the space $H^1_0(\Omega)$ and $H^1_{0}(\Omega,\tau)$.
  The same argument as before proves that the right inequality in (\ref{Ja14aa}) is strong.

\end{proof}

The next lemma gives important estimates for $\widehat{\mu}_j$, which are essential for the proof our main theorem.

\begin{lemma}\label{L26a} For all $\tau\in (0,\tau_*)$

{\rm (i)}
\begin{equation}\label{Ja19a}
\mu_0<\widehat{\mu}_0(\tau)<\nu^{*0}_0\;\;.
\end{equation}

{\rm (ii)} If $0<\nu^{*0}_1$ and $\nu^{0*}_0<\mu_1$ then
\begin{equation}\label{J6a}
\nu^{0*}_0<\widehat{\mu}_1(\tau)<0,
\end{equation}
and
\begin{equation}\label{J6aa}
\mu_1<\widehat{\mu}_2(\tau)<\nu^{*0}_1 
\end{equation}
{\rm (iii)}
\begin{equation}\label{Ja25a}
\nu^{0*}_1<\widehat{\mu}_3(\tau).
\end{equation}
\end{lemma}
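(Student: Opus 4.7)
The plan is to combine Lemma \ref{L25} with a precise identification of the auxiliary eigenvalues $\mu_{Nj}$ and $\mu_{Dj}$ in terms of the four half-domain sequences $\{\mu_j\}$, $\{\nu^{*0}_j\}$, $\{\nu^{0*}_j\}$, $\{\nu^{00}_j\}$. Since $\Omega$ is symmetric under $X\mapsto -X$ and the coefficients $\omega'(\Psi)$ and $\rho$ are both even in $X$, the Dirichlet problem (\ref{J6b}) and Neumann problem (\ref{J6bc}) on $\Omega$ each split into even and odd sectors under this reflection. Restricting an even or odd eigenfunction to $D$ and reading off the induced conditions at $X=0$ and $X=\Lambda/2$ gives: even Neumann eigenfunctions correspond to $H^1_0(D)$ with eigenvalues $\mu_j$; odd Neumann eigenfunctions to $H^1_{00*}(D)$ with eigenvalues $\nu^{0*}_j$; even Dirichlet eigenfunctions to $H^1_{0*0}(D)$ with eigenvalues $\nu^{*0}_j$; and odd Dirichlet eigenfunctions to $H^1_{000}(D)$ with eigenvalues $\nu^{00}_j$. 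Consequently $\{\mu_{Nj}\}$ is the ordered merge (with multiplicity) of $\{\mu_j\}$ and $\{\nu^{0*}_j\}$, while $\{\mu_{Dj}\}$ is the ordered merge of $\{\nu^{*0}_j\}$ and $\{\nu^{00}_j\}$.

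Given this identification, parts (i) and (ii) reduce to a short enumeration of the lowest few $\mu_{Nj}$ and $\mu_{Dj}$ followed by an appeal to Lemma \ref{L25}. For (i): Proposition \ref{Pr1} gives $\Phi_0>0$ on $\overline{D}$, so $\Phi_0\notin H^1_{00*}(D)$, forcing $\mu_0<\nu^{0*}_0$ and hence $\mu_{N0}=\mu_0$; Proposition \ref{P22} gives $\nu^{*0}_0<0=\nu^{00}_0$, so $\mu_{D0}=\nu^{*0}_0$. For (ii), under the hypotheses $\nu^{*0}_1>0$ and $\nu^{0*}_0<\mu_1$, using the inclusion $H^1_{000}\subset H^1_{0*0}$ (which yields $\nu^{00}_1\geq\nu^{*0}_1$) together with the inequality $\nu^{0*}_1>\mu_1$ of Proposition \ref{P22}, one reads off $\mu_{N1}=\nu^{0*}_0$, $\mu_{D1}=\nu^{00}_0=0$, $\mu_{N2}=\mu_1$, and $\mu_{D2}=\nu^{*0}_1$. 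The four inequalities in (\ref{J6a}) and (\ref{J6aa}) then follow directly from the strict bounds of Lemma \ref{L25}.

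The hard part will be (iii). Continuing the enumeration, after $\mu_0,\nu^{0*}_0,\mu_1$ the next element of $\{\mu_j\}\cup\{\nu^{0*}_j\}$ is $\min(\mu_2,\nu^{0*}_1)$. In the sub-case $\nu^{0*}_1\leq\mu_2$ one has $\mu_{N3}=\nu^{0*}_1$, and Lemma \ref{L25} delivers (\ref{Ja25a}) immediately. The obstacle is the opposite sub-case $\mu_2<\nu^{0*}_1$: here $\mu_{N3}=\mu_2<\nu^{0*}_1$ and Lemma \ref{L25} yields only the weaker bound $\widehat{\mu}_3(\tau)>\mu_2$. Overcoming this will require an argument that does not rely solely on the Neumann bound, for example by constructing an explicit codimension-$3$ subspace of $H^1_0(\Omega,\tau)$ on which the Rayleigh quotient is strictly larger than $\nu^{0*}_1$, or by exploiting the Bloch band structure of $\widehat{\mu}_j(\tau)$ together with the exact spectrum at the two high-symmetry points $\tau=0$ (periodic, eigenvalues $\{\mu_j\}\cup\{\nu^{00}_j\}$) and $\tau=\pi/\Lambda$ (antiperiodic, eigenvalues $\{\nu^{*0}_j\}\cup\{\nu^{0*}_j\}$), where the even/odd decomposition is compatible with the quasi-periodic boundary condition.
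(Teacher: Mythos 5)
Your overall strategy is exactly the paper's: split both auxiliary spectral problems (\ref{J6b}) and (\ref{J6bc}) on $\Omega$ into even and odd sectors under $X\mapsto-X$, identify the resulting half-domain eigenvalue sequences, and read off the $\mu_{Nj}$ and $\mu_{Dj}$ as ordered merges. Your four-way identification (even Neumann $\leftrightarrow\mu_j$, odd Neumann $\leftrightarrow\nu^{0*}_j$, even Dirichlet $\leftrightarrow\nu^{*0}_j$, odd Dirichlet $\leftrightarrow\nu^{00}_j$) is correct and matches the paper's equations (\ref{Ja16c}) and (\ref{Ja16ca}). Your enumeration for parts (i) and (ii) --- giving $\mu_{N0}=\mu_0$, $\mu_{D0}=\nu^{*0}_0$, $\mu_{N1}=\nu^{0*}_0$, $\mu_{D1}=\nu^{00}_0=0$, $\mu_{N2}=\mu_1$, $\mu_{D2}=\nu^{*0}_1$ --- is correct, and the supporting steps (using $H^1_{000}\subset H^1_{0*0}$ to get $\nu^{00}_1\ge\nu^{*0}_1$, and Proposition \ref{P22} for the remaining ordering) are the right ones. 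Combined with Lemma \ref{L25} this establishes (i) and (ii), precisely as the paper does.

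Your assessment of (iii) is also sharp, and it is worth saying explicitly: the gap you locate is present in the paper's own proof. The paper asserts $\mu_{N3}=\nu^{0*}_1$ in (\ref{Ja16ca}) and then invokes Lemma \ref{L25}, but $\mu_{N3}$ is by construction $\min(\mu_2,\nu^{0*}_1)$, and neither Proposition \ref{P22} nor the stated assumption $\nu^{0*}_0<\mu_1$ rules out $\mu_2<\nu^{0*}_1$. In that sub-case the merge gives $\mu_{N3}=\mu_2$, and Lemma \ref{L25} then yields only $\widehat{\mu}_3(\tau)>\mu_2$, which is weaker than (\ref{Ja25a}). So you have not simply failed to find an argument the paper has --- you have identified an unstated hypothesis (effectively $\nu^{0*}_1\le\mu_2$, or some replacement interlacing statement) that the paper also takes for granted. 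Note, however, that for the paper's downstream use of (iii) in establishing (\ref{M16b}) (namely $\widehat{\mu}_3(t,\tau)>0$ whenever $\mu_2(t)>0$), the weaker conclusion $\widehat{\mu}_3(\tau)>\min(\mu_2,\nu^{0*}_1)$ suffices provided one also knows $\nu^{0*}_1\ge 0$, so the gap is likely reparable; but as stated, both (\ref{Ja16ca}) and part (iii) of the lemma are not fully justified, and your diagnosis of where the argument strains is accurate.
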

\begin{proof}
First let us prove the equalities
\begin{equation}\label{Ja16c}
\mu_{D0}=\nu^{*0}_0,\;\;\mu_{D1}=\nu^{00}_0,\;\;\mu_{D2}=\nu^{*0}_1
\end{equation}
assuming $\nu^{00}_0<\nu^{*0}_1$ (this is needed for the second and third relations above).

 We represent $H^1_{00}(\Omega)$ as
$$
H^1_{00}(\Omega)=\widehat{X}+\widehat{Y},
$$
where $\widehat{X}$ consists of even functions in $H^1_{00}(\Omega)$ and $\widehat{Y}$ consists of odd functions in $H^1_{00}(\Omega)$. Then the spaces $\widehat{X}$ and $\widehat{Y}$ are invariant for this spectral problem.


Introduce two spectral problems
\begin{equation}\label{Ja16b}
a_\Omega(u,v)=\mu(u,v)_{L^2(\Omega)}\;\;\mbox{for all $v\in \widehat{X}$},
\end{equation}
where $u\in \widehat{X}$, and
\begin{equation}\label{Ja16ba}
a_\Omega(u,v)=\mu(u,v)_{L^2(\Omega)}\;\;\mbox{for all $v\in \widehat{Y}$},
\end{equation}
where $u\in \widehat{Y}$. Then the eigenvalues and eigenfunctions of the problems (\ref{Ja16b}) and (\ref{Ja16ba}) coincide with the  eigenvalues and eigenfunctions of the problem (\ref{J6b}). Furthermore, the eigenvalues of the problem (\ref{Ja16b}) coincides with the eigenvalues of the problem (\ref{Ja24a}) and the eigenvalues of the problem (\ref{Ja16ba}) coincides with the eigenvalues of the problem (\ref{Ja24ab}). This implies (\ref{Ja16c}) and due to Proposition \ref{P22} and Lemma \ref{L25} we get the right-hand inequalities in (\ref{Ja19a})-(\ref{J6aa}).

Let us turn to the estimates (\ref{Ja19a})-(\ref{J6aa}) and (\ref{Ja25a}) from below.
We start from proving
\begin{equation}\label{Ja16ca}
\mu_{N0}=\mu_0,\;\;\mu_{N1}=\nu^{0*}_0,\;\;\mu_{N2}=\mu_1,\;\;\mu_{N3}=\nu^{0*}_1
\end{equation}
assuming $\nu^{0*}_0<\mu_1$ (this is needed only for the second and third relations above).

 We use the representation
$$
H^1_0(\Omega)=\tilde{X}+\tilde{Y},
$$
where $\tilde{X}$ consists of even functions with respect to $X$ in $H^1_{0}(\Omega)$ and $\tilde{Y}$ consists of odd functions in $H^1_{0}(\Omega)$.
Introduce two more spectral problem
\begin{equation}\label{Ja16bv}
a(u,v)=\mu(u,v)_{L^2(\Omega)}\;\;\mbox{for all $v\in \tilde{X}$},
\end{equation}
where $u\in \tilde{X}$, and
\begin{equation}\label{Ja16bav}
a(u,v)=\mu(u,v)_{L^2(\Omega)}\;\;\mbox{for all $v\in \tilde{Y}$},
\end{equation}
where $u\in \tilde{Y}$. Then the eigenvalues and eigenfunctions of the problems (\ref{Ja16bv}) and (\ref{Ja16bav}) coincides with of the  eigenvalues and eigenfunctions of the problem (\ref{J6bc}).
Furthermore, the eigenvalues of the problem (\ref{Ja16bv}) coincides with the eigenvalues of the problem (\ref{Sept8a}) and the eigenvalues of the problem (\ref{Ja16bav}) coincides with the eigenvalues of the problem (\ref{Ja24aa}). This implies  (\ref{Ja16ca}) and due to Proposition \ref{P22} and Lemma \ref{L25} we get the right-hand inequalities in (\ref{Ja19a})-(\ref{Ja25a}).


\end{proof}

\begin{corollary}\label{C15a} If $\mu_1>0$ then
$$
\widehat{\mu}_1(\tau)<0,\;\;\widehat{\mu}_2(\tau)>0\;\;\mbox{for $\tau\in (0,\tau_*)$ and}\;\;\widehat{\mu}_2(0)=\mu_1>0,\;\widehat{\mu}_1=0.
$$
Moreover the eigenvalue $\mu=0$ is simple with the eigenfunction $\psi_x$.
\end{corollary}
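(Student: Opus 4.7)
The proof will assemble three ingredients already in hand: the explicit values of $\widehat{\mu}_j(0)$ recorded in the display preceding (\ref{M16a}), the strict two-sided estimates of Lemma~\ref{L26a}, and the sign/simplicity information from Propositions~\ref{Pr1} and~\ref{P22}.

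First I would settle the values at $\tau=0$. The formulas preceding (\ref{M16a}) give $\widehat{\mu}_1(0)=\min(\mu_1,0)$ and $\widehat{\mu}_2(0)=\min(\max(\mu_1,0),\mu_2,\nu^{00}_1)$. Under the hypothesis $\mu_1>0$ the first equals $0$ at once, while for the second I use $\mu_1\leq\mu_2$ and the min-max principle together with the inclusion $H^1_{000}(D)\subset H^1_0(D)$ to get $\nu^{00}_1\geq\mu_1$; hence $\widehat{\mu}_2(0)=\mu_1>0$.

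Next, for $\tau\in(0,\tau_*)$ I verify the hypotheses of Lemma~\ref{L26a}(ii): by Proposition~\ref{P22} we have $\nu^{*0}_1>\mu_1>0$ and $\nu^{0*}_0<0<\mu_1$, so both $\nu^{*0}_1>0$ and $\nu^{0*}_0<\mu_1$ hold. The lemma then delivers exactly the two claimed strict inequalities $\widehat{\mu}_1(\tau)<0$ and $\mu_1<\widehat{\mu}_2(\tau)$ on $(0,\tau_*)$.

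Finally I would prove the simplicity claim. The function $u_*=\Psi_X$ satisfies (\ref{Okt21a}), which is precisely (\ref{Nov1b}) at $\tau=0$ with eigenvalue $0$, so $(0,\Psi_X)$ is an eigenpair. At $\tau=0$ the form ${\bf a}(\cdot,\cdot;0)$ and the space $H^1_{0,p}(\Omega)$ split under the reflection $X\mapsto -X$ into even and odd subspaces; periodicity together with oddness force odd functions to vanish at $X=\pm\Lambda/2$, so the odd block is precisely the spectral problem (\ref{Ja24ab}) on $D$, while the even block is (\ref{Sept8a}). Since $\mu_0<0<\mu_1$, the even block contributes no zero eigenvalue; in the odd block, positivity of $u_*|_D$ together with $a_D(u_*,u_*)=0$ identifies $\nu^{00}_0=0$ as the principal eigenvalue, and the Rayleigh-quotient / nodal-set argument of Proposition~\ref{Pr1} (applied verbatim to the problem on $H^1_{000}(D)$) shows that $\nu^{00}_0$ is simple. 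The union of the two spectra therefore has a one-dimensional zero eigenspace, spanned by $\Psi_X$. The only slightly delicate step is the parity decomposition of $H^1_{0,p}(\Omega)$, which relies on reality of the form at $\tau=0$ and on the fact that a $\Lambda$-periodic function that is odd in $X$ automatically vanishes at $X=\pm\Lambda/2$; everything else is a direct invocation of the preceding lemmas.
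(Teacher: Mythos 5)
Your proposal is correct and is exactly the assembly of ingredients (the displayed $\tau=0$ eigenvalue identities, Lemma~\ref{L26a}(ii) with hypotheses verified via Proposition~\ref{P22}, and the parity decomposition) that the paper leaves implicit by labelling the statement a corollary. One small streamlining is available: once you have $\widehat{\mu}_0(0)=\mu_0<0$, $\widehat{\mu}_1(0)=0$, and $\widehat{\mu}_2(0)=\mu_1>0$, simplicity of the zero eigenvalue is immediate from the strict inequalities, and the even/odd splitting is then needed only to identify the eigenspace with $\mathrm{span}\{\psi_x\}$, since $\psi_x$ is an odd eigenfunction with eigenvalue $\nu^{00}_0=0$.
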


\subsection{Asymptotics of eigenvalues $\widehat{\mu}_1(\tau)$ and $\widehat{\mu}_2(\tau)$}

In this section we assume that $\mu_1=0$. According to Proposition \ref{PF10a} this eigenvalue is simple in the space of periodic even function.  Since $0$ is always simple eigenvalue in the space of odd periodic functions, the multiplicity of the eigenvalue $\mu_1$ is two in the space of periodic functions. One can check that
if $\mu$ is an eigenvalue of (\ref{Sept7a}) for a certain $\tau$ then $\mu$ is also the eigenvalue of the same multiplicity
of the same problem for $-\tau$.
Therefore
$$
\widehat{\mu}_1(-\tau)=\widehat{\mu}_1(\tau),\;\;\widehat{\mu}_2(-\tau)=\widehat{\mu}_2(\tau).
$$
Furthermore due to Lemma \ref{L26a}
\begin{equation}\label{F3a}
\widehat{\mu}_1(\tau)<0\;\;\mbox{and}\;\;\widehat{\mu}_2(\tau)>0\;\;\mbox{for $\tau\in(0,\tau_*)$}.
\end{equation}
Since $\mu(-\tau)$, $\mu(\tau+\tau_*)$ are eigenvalues if $\mu(\tau)$ is an eigenvalue we have
$$
\widehat{\mu}_1(\tau_*-\tau)=\widehat{\mu}_1(\tau)\;\;\mbox{and}\;\;\widehat{\mu}_2(\tau_*-\tau)=\widehat{\mu}_2(\tau).
$$

Moreover if $\phi(X,Y;\tau)$ is an eigenfunction corresponding to $\mu(\tau)$ then  $\phi(-X.Y;-\tau)$ is an eigenfunction corresponding to the eigenvalue $\mu(-\tau)$.

\begin{lemma}\label{LD2a} Assume that $\mu_1=0$. 
Then one of the following two options is valid

{\rm (i)}  There exists an integer $n\geq 1$ such that
 \begin{equation}\label{Okt5a}
\widehat{\mu}_1(\tau)=-\kappa_n\tau^{2n-1}+O(\tau^{2n}),\;\;\widehat{\mu}_2(\tau)=\kappa_n\tau^{2n-1}+O(\tau^{2n})
\end{equation}
for small positive $\tau$, where $\kappa_n> 0$.

{\rm (ii)} There exist  integers $n,\,m\geq 1$ such that
\begin{equation}\label{F3b}
\widehat{\mu}_1(\tau)=A\tau^{2n}+O(|\tau|^{2n+1}),\;\;\widehat{\mu}_2(\tau)=B\tau^{2m}+O(|\tau|^{2m+1})
\end{equation}
for small positive $\tau$, where $A<0$ and $B>0$.

\end{lemma}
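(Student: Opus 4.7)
The plan is to apply Rellich's real-analytic perturbation theorem to the family of forms $\tau\mapsto \mathbf{a}(\cdot,\cdot;\tau)$ on $H^{1}_{0,p}(\Omega)$ and then to exploit the reflection symmetry $\widehat{\mu}_j(-\tau)=\widehat{\mu}_j(\tau)$ together with the sign information \eqref{F3a} to extract the two alternatives.

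First I would observe that
\[
\mathbf{a}(u,v;\tau)=\mathbf{a}(u,v)+i\tau\,\mathbf{b}(u,v)+\tau^{2}\mathbf{c}(u,v)
\]
is a polynomial of degree two in $\tau$ whose coefficient forms are Hermitian (the identity $\overline{\mathbf{b}(v,u)}=-\mathbf{b}(u,v)$ makes $i\mathbf{b}$ Hermitian), and that $H^{1}_{0,p}(\Omega)\hookrightarrow L^{2}(\Omega)$ compactly. Thus $\mathbf{a}(\cdot,\cdot;\tau)$ is a self-adjoint holomorphic family of type (B) with compact resolvent. By Rellich's theorem the eigenvalues in a neighbourhood of $\tau=0$ can be relabelled to be real-analytic in $\tau$, together with analytic eigenfunction branches. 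The assumption $\mu_1=0$, the simplicity statement of Proposition \ref{PF10a} in the even sector, and the fact that $u_*$ (see \eqref{Okt21a}) supplies a simple zero eigenvalue in the odd sector together show that $0$ is an eigenvalue of total multiplicity exactly two in $H^{1}_{0,p}(\Omega)$. Hence there exist exactly two real-analytic curves $\lambda_1(\tau),\lambda_2(\tau)$ near $\tau=0$ with $\lambda_1(0)=\lambda_2(0)=0$ which, after ordering, realize the pair $(\widehat{\mu}_1(\tau),\widehat{\mu}_2(\tau))$.

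Next I would use the $\tau\mapsto-\tau$ invariance. Since $\widehat{\mu}_j(-\tau)=\widehat{\mu}_j(\tau)$, the unordered pair $\{\lambda_1(\tau),\lambda_2(\tau)\}$ is invariant under reflection, and $\lambda_1\not\equiv\lambda_2$ by \eqref{F3a}. Analyticity then forces exactly one of two alternatives to hold identically: either (a) both $\lambda_j$ are even in $\tau$, or (b) $\lambda_2(\tau)=\lambda_1(-\tau)$. In case (b), expand $\lambda_1(\tau)=\sum_{k\geq 1}a_k\tau^{k}$ and let $j_0$ be the smallest index with $a_{j_0}\neq 0$; then $\lambda_2(\tau)=\sum_k(-1)^k a_k\tau^{k}$. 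If $j_0$ were even, $\lambda_1$ and $\lambda_2$ would share the same nonzero leading term and hence the same sign for small $\tau>0$, contradicting \eqref{F3a}. Therefore $j_0=2n-1$ is odd; setting $\kappa_n=-a_{2n-1}$, the inequality $\widehat{\mu}_1<0<\widehat{\mu}_2$ from \eqref{F3a} pins down $\kappa_n>0$ and yields option (i). In case (a) each $\lambda_j(\tau)=c_j\tau^{2n_j}+O(\tau^{2n_j+1})$ with $c_j\neq 0$ (an identically vanishing branch would again contradict \eqref{F3a}), and \eqref{F3a} forces $c_1$ and $c_2$ to have opposite signs; labelling the negative one as $A$ and the positive one as $B$ produces option (ii).

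I expect the main technical point to be the clean justification of Rellich's theorem in this quasi-momentum setting — namely, the fact that the two-dimensional root subspace at $\tau=0$ splits into two genuinely real-analytic eigenvalue branches, with no Puiseux-type crossings. This is a standard consequence of the polynomial dependence on $\tau$ together with self-adjointness for real $\tau$, but it warrants explicit reference in the write-up. Once that is in place, the remainder of the argument is the short combinatorial analysis above of analytic branches compatible with $\tau\mapsto-\tau$ and with the prior sign estimates.
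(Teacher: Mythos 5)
Your argument is correct and follows essentially the same route as the paper's proof: both start from the fact that $\mu=0$ has total multiplicity two in $H^1_{0,p}(\Omega)$ (even sector by Proposition~\ref{PF10a}, odd sector via $u_*$), invoke real-analytic perturbation of the self-adjoint family to get two analytic eigenvalue branches through $0$, use the $\tau\mapsto-\tau$ symmetry to reduce to the dichotomy ``both branches even'' versus ``branches exchanged by reflection,'' and then pin down the parity and signs of the leading terms with \eqref{F3a}. The paper states the same case analysis more tersely (its options (a)–(c), with (a) discarded by \eqref{F3a}), while you supply the elementary combinatorial details that the paper leaves implicit; there is no gap and no genuinely different idea.
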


\begin{proof}  Since the multiplicity of the  eigenvalue $\mu=0$ of the problem (\ref{Sept7a}) has multiplicity two for $\tau=0$ there are two analytic in $\tau$ branches of eigenvalues of (\ref{Sept7a}), denote them by $\theta_1(\tau)$ and $\theta_2(\tau)$ such that $\theta_1(0)=\theta_2(0)=0$.
Since $\theta_1(-\tau)$ and $\theta_2(-\tau)$ are also eigenvalues one of the following options is valid:

(a) $\theta_1(\tau)=\theta_2(\tau)$ and the function $\theta_1$ is even with respect to $\tau$;

(b)  $\theta_1(-\tau)=\theta_2(\tau)$;

(c) $\theta_1(-\tau)=\theta_1(\tau)$ and $\theta_2(-\tau)=\theta_2(\tau)$.

Observing that the functions $\theta_j(\tau)$ coincides with one of functions $\widehat{\mu}_j$, $j=1,2$, for small positive $\tau$, we conclude that only the options (b) and (c) may occur. This leads to the options (i) and (ii) in our lemma. The sign of $\kappa_n$, $A$ and $B$ follows from (\ref{F3a}).
\end{proof}

The asymptotics of the corresponding eigenfunctions is given in the following

\begin{lemma} Let $\upsilon_1(\tau)$ and $\upsilon_2(\tau)$ be eigenfunctions analytically depending on $\tau$  corresponding to the eigenvalues $\theta_1$ and $\theta_2$ in the proof of {\rm Lemma \ref{LD2a}}.
In the case {\rm (\ref{Okt5a})} the
  corresponding eigenfunctions satisfy
\begin{equation}\label{Okt5aa}
\upsilon_2(0)=\phi_0+i\beta\psi_0,\;\;\upsilon_1(0)=\phi_0-i\beta^{-1}\psi_0,\;\;\;\mbox{with $\beta\neq 0$}.
\end{equation}
In the case {\rm (\ref{F3b})}
the corresponding eigenfunctions satisfy
\begin{equation}\label{Okt5aaz}
\upsilon_2(0)=\phi_0\;\;\mbox{and}\;\;\upsilon_1(0)=\psi_0.
\end{equation}
\end{lemma}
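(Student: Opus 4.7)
The plan is to apply analytic perturbation theory to the self-adjoint family $L(\tau)$ associated to the form $\mathbf{a}(\cdot,\cdot;\tau) = \mathbf{a}(\cdot,\cdot) + i\tau\mathbf{b}(\cdot,\cdot) + \tau^2\mathbf{c}(\cdot,\cdot)$ and then identify $\upsilon_j(0)$ by reducing to the two-dimensional kernel $E_0 = \operatorname{span}\{\phi_0,\psi_0\}$ of $L(0)$. Since $L(\tau)$ depends analytically on $\tau$ and is self-adjoint for real $\tau$, the Rellich--Kato theorem (already invoked implicitly in Lemma~\ref{LD2a}) gives analytic branches $\theta_j(\tau)$ and analytic eigenvectors $\upsilon_j(\tau)$ through the doubly degenerate eigenvalue $\mu=0$; passage to $\tau=0$ forces $\upsilon_j(0)\in E_0$, and the task reduces to pinning down the direction of $\upsilon_j(0)$ inside $E_0$.

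First I would compute the $2\times 2$ matrix of $\mathbf{a}(\cdot,\cdot;\tau)$ on $E_0$. The $\tau^0$ block vanishes because $\phi_0,\psi_0\in\ker L(0)$. The $\tau^1$ block is purely off-diagonal: the diagonal entries $\mathbf{b}(\phi_0,\phi_0)$ and $\mathbf{b}(\psi_0,\psi_0)$ vanish (each integrand is a total $X$-derivative on a periodic strip), so the only possibly non-zero first-order contribution is $i\tau b_0$ with $b_0:=\mathbf{b}(\phi_0,\psi_0)\in\mathbb{R}$. In case~(i) with $n=1$ one necessarily has $b_0\neq 0$; diagonalising
\begin{equation*}
M_1(\tau)=i\tau\begin{pmatrix}0 & b_0\\ -b_0 & 0\end{pmatrix}
\end{equation*}
gives positive eigenvalue $|b_0|\tau$ with eigenvector $\phi_0-i\operatorname{sgn}(b_0)\psi_0$ and negative eigenvalue $-|b_0|\tau$ with eigenvector $\phi_0+i\operatorname{sgn}(b_0)\psi_0$; after matching with the sign condition (\ref{F3a}) this yields (\ref{Okt5aa}) with $\beta=-\operatorname{sgn}(b_0)$. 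The free scalar $\beta$ in the statement reflects the freedom to rescale the $\phi_0$-component of $\upsilon_j(\tau)$ by an analytic non-vanishing factor. For $n\geq 2$ in (\ref{Okt5a}) the same reduction is iterated past vanishing leading off-diagonal terms and produces the same form of $\upsilon_j(0)$.

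In case~(ii), the first-order term vanishes on $E_0$, so the splitting appears at order $\tau^2$ (or higher). The effective $\tau^2$-matrix on $E_0$ from second-order perturbation theory is
\begin{equation*}
M_2=\mathbf{c}|_{E_0}-B^{*}\bigl(L(0)|_{E_0^{\perp}}\bigr)^{-1}B,
\end{equation*}
where $B$ denotes the $\mathbf{b}$-coupling of $E_0$ with $E_0^{\perp}$. The off-diagonal entry $\mathbf{c}(\phi_0,\psi_0)$ vanishes by parity, and the off-diagonal entry of $B^{*}(\cdots)^{-1}B$ vanishes because each summand $\mathbf{b}(\phi_0,\Phi_k)\mathbf{b}(\Phi_k,\psi_0)$ is zero: the eigenfunctions $\Phi_k$ of $L(0)$ orthogonal to $E_0$ can be chosen of definite parity in $X$, and the form $\mathbf{b}$ couples only eigenfunctions of opposite parity. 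Hence $M_2$ is diagonal in $\{\phi_0,\psi_0\}$, forcing each $\upsilon_j(0)$ to be purely even or purely odd; the sign condition $A<0<B$ from Lemma~\ref{LD2a}(ii) then pins down $\upsilon_2(0)=\phi_0$ and $\upsilon_1(0)=\psi_0$, which is (\ref{Okt5aaz}).

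The main obstacle is confirming the assignment in case~(ii): namely, that the $\tau^2$-coefficient along $\phi_0$ is positive while the one along $\psi_0$ is negative. Given the diagonality of $M_2$ this reduces to a scalar sign computation for two second-order Rayleigh--Schr\"odinger coefficients, and here the detailed structure of $\psi_0$ as (a normalised version of) $u_*=\Psi_X$ -- which is the ground state of the odd-periodic problem by Proposition~\ref{P22} -- together with the interlacing of $\widehat{\mu}_j(\tau)$ between the auxiliary spectra supplied by Lemma~\ref{L26a} is what makes the argument go through. The computation underlying case~(i) is otherwise routine matrix algebra.
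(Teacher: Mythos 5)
Your reduction to the effective $2\times 2$ matrix on $E_0 = \operatorname{span}\{\phi_0,\psi_0\}$ is, up to packaging, the same Rayleigh--Schr\"odinger argument that the paper runs in explicit power-series form with the ans\"atze $\Phi(\tau),\Psi(\tau)$ and the solvability conditions (\ref{Okt23b})--(\ref{D5b}). Your first- and second-order computations (the case $n=1$ in (\ref{Okt5a}) and $m=n=1$ in (\ref{F3b})) are correct and recover the paper's equations (\ref{D2a})--(\ref{D2b}) and the decoupled $(\alpha,\beta)$-system at the end of the paper's proof.

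The gap is the passage to arbitrary order. In case~(i) you write only that ``the same reduction is iterated past vanishing leading off-diagonal terms,'' and in case~(ii) your parity argument constrains only $M_2$; neither addresses what happens when the splitting is delayed to order $\ge 3$ (e.g. $n\ge 2$ in (\ref{Okt5a}), or $\min(m,n)\ge 2$ in (\ref{F3b})). What actually forces the conclusion at every order is a structural normal form for the reduced matrix $M(\tau)$ in the basis $\{\phi_0,\psi_0\}$: the two symmetries $\tau\mapsto-\tau$ (complex conjugation of (\ref{Sept7a})) and $X\mapsto -X$ (which fixes $\phi_0$ and flips $\psi_0$), combined with Hermiticity for real $\tau$, give
\begin{equation*}
M(\tau)=\begin{pmatrix} a(\tau^2) & i\tau\, c(\tau^2)\\ -i\tau\, c(\tau^2) & b(\tau^2)\end{pmatrix},
\end{equation*}
with $a,b,c$ real analytic and $a(0)=b(0)=0$. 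Case~(i) of Lemma~\ref{LD2a} is exactly the regime in which $\tau^2 c(\tau^2)^2$ dominates $\bigl(\tfrac{a-b}{2}\bigr)^2$ as $\tau\to 0$, and then the limiting eigendirections are $\phi_0\mp i\operatorname{sgn}(c)\,\psi_0$ for \emph{every} $n\ge 1$; case~(ii) is the opposite regime, with limits $\phi_0$ and $\psi_0$. Your $M_1,M_2$ computations identify the leading Taylor coefficients of $a,b,c$, but without the full normal form the claim for $n\ge 2$ and for delayed splitting in case~(ii) is unsupported. This is precisely the content that the paper supplies through its recursive construction of $\phi_j,\psi_k$ (with the parity bookkeeping built into the ansatz and conditions (\ref{Okt24a})).

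On your closing remark that the sign assignment in case~(ii) is the ``main obstacle'': note that the paper's own proof of this lemma also does not determine which analytic branch converges to $\phi_0$ and which to $\psi_0$; that identification is supplied afterwards by Lemma~\ref{L16s}, which shows the branch through $\psi_0=\psi_x$ has strictly negative $\tau^2$-coefficient. So that particular loose end is not a defect of your argument relative to the paper's.
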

\begin{proof}
 Introduce polynomials
$$
\Phi(\tau)=\phi_0+i\tau\phi_1+\cdots+(i\tau)^{p}\phi_{p}\;\;\mbox{and}\;\;\Psi(\tau)=\psi_0+i\tau\psi_1+\cdots+(i\tau)^{p}\psi_{p},
$$
where $\phi_j$, $\psi_k$ are real-valued functions from $H^1_{p0}(\Omega)$ such that they are even when $j$ is even and $k$ is odd, and they are odd when $j$ is odd and $k$ is even. Let us construct the functions $\phi_j$ and $\psi_k$, $j,k=1,\ldots,p-1$, satisfying
$$
{\bf a}(\Phi(\tau),v;\tau)=O(|\tau|^p),\;\;{\bf a}(\Psi(\tau),v;\tau)=O(|\tau|^p)\;\;\mbox{for all $v\in H^1_{p0}(\Omega)$}.
$$
Equating terms of the same power of $\tau$ in the first relation, we obtain
\begin{equation}\label{Okt23a}
{\bf a}(\phi_1,v)+{\bf b}(\phi_0,v)=0
\end{equation}
and
\begin{equation}\label{Okt23aa}
{\bf a}(\phi_{k+2},v)+{\bf b}(\phi_{k+1},v)+{\bf c}(\phi_{k},v)=0\;\;\mbox{for $k=0,1,\ldots,p-3$ and $v\in H^1_0(\Omega)$}.
\end{equation}
Similar relations hold with $\phi$ replaced by $\psi$. Equations (\ref{Okt23a}) and (\ref{Okt23aa}) are solvable if "the right-hand side is orthogonal to the kernel of the main operator corresponding to the form ${\bf a}$", i.e.
\begin{equation}\label{Okt23b}
{\bf b}(\phi_0,\psi_0)=0,
\end{equation}
\begin{equation}\label{Okt23ba}
{\bf b}(\phi_{k+1},\phi_0)+{\bf c}(\phi_{k},\phi_0)=0,\;\;\;{\bf b}(\phi_{k+1},\psi_0)+{\bf c}(\phi_{k},\psi_0)=0,
\end{equation}
and
\begin{equation}\label{D5b}
{\bf b}(\psi_{k+1},\phi_0)+{\bf c}(\psi_{k},\phi_0)=0,\;\;\;{\bf b}(\psi_{k+1},\psi_0)+{\bf c}(\psi_{k},\psi_0)=0.
\end{equation}
Here we used that the form ${\bf b}$ is anti-symmetric.
In this case solutions are not unique and we choose the solutions orthogonal to the kernel, i.e.
\begin{equation}\label{Okt24a}
{\bf c}(\phi_k,\phi_0)=0,\;\;{\bf c}(\phi_k,\psi_0)=0,\;\;{\bf c}(\psi_k,\phi_0)=0,\;\;{\bf c}(\psi_k,\psi_0)=0,
\end{equation}
for $k=1,\ldots,p-1$. We note that some terms in relations (\ref{Okt24a}) vanish since some of functions are odd and some of them are even.

(i) Assume that (\ref{Okt5a}) is valid and $p=2n-1$. In order to write equations similar to (\ref{Okt23a}) and (\ref{Okt23aa}) for the next term we introduce the function
$$
\Phi(\alpha,\beta,\tau)=\alpha\Phi(\tau)+\beta\Psi(\tau),
$$
where $\alpha$ and $\beta$ are unknown constants. The equation for finding $\alpha $ and $\beta$ is
$$
{\bf a}(\Phi(\alpha,\beta,\tau),v;\tau)=\kappa (\alpha \phi_0+\beta\psi_0,v)+O(|\tau|^{p+1})\;\;\mbox{for all $v\in H^1_{p0}(\Omega$)}.
$$
as the result we obtain
$$
{\bf a}(\alpha\phi_p+\beta\psi_p,v)+{\bf b}(\alpha\phi_{p-1}+\beta\psi_{p-1},v)+{\bf c}(\alpha\phi_{p-2}+\beta\psi_{p-2},v)=(-i)^p\kappa{\bf c}(\alpha\phi_0+\beta\psi_0,v)
$$
for all $v\in H^1_{p0}(\Omega)$.
This problem is solvable if
\begin{equation}\label{D1b}
{\bf b}(\alpha\phi_{p-1}+\beta\psi_{p-1},\phi_0)+{\bf c}(\alpha\phi_{p-2}+\beta\psi_{p-2},\phi_0)=(-i)^p\kappa\alpha
\end{equation}
and
\begin{equation}\label{D1ba}
{\bf b}(\alpha\phi_{p-1}+\beta\psi_{p-1},\psi_0)+{\bf c}(\alpha\phi_{p-2}+\beta\psi_{p-2},\psi_0)=(-i)^p\kappa\beta,
\end{equation}

From (\ref{D1b}) and (\ref{D1ba}) it follows
\begin{equation}\label{D2a}
\beta{\bf b}(\psi_{p-1},\phi_0)=(-i)^p\kappa\alpha
\end{equation}
and
\begin{equation}\label{D2aa}
\alpha{\bf b}(\phi_{p-1},\psi_0)=(-i)^p\kappa\beta,
\end{equation}
which implies
\begin{equation}\label{D2b}
\kappa^2=(-1)^p{\bf b}(\phi_{p-1},\psi_0){\bf b}(\psi_{p-1},\phi_0).
\end{equation}
This implies that the left-hand side in (\ref{D2b}) is positive, $\kappa$ and $-\kappa$  satisfy and
$$
\alpha=1\;\;\mbox{and}\;\;\beta=(-i)^p\kappa^{-1}{\bf b}(\phi_{p-1},\psi_0).
$$

This implies that
$$
(-1)^p{\bf b}(\phi_{p-1},\psi_0){\bf b}(\phi_{p-1},\psi_0)=\alpha^2.
$$

(ii) Let (\ref{F3b}) be valid.  The same calculation as above shows
$$
\alpha{\bf b}(\phi_{p-1},\phi_0)=(-1)^{p/2}\kappa\alpha,
$$
and
$$
\beta{\bf b}(\psi_{p-1},\psi_0)=(-1)^{p/2}\kappa\beta.
$$
So one of quantities ${\bf b}(\phi_{p-1},\phi_0$ or ${\bf b}(\psi_{p-1},\psi_0)$ must be different from $0$ and we arrive at (\ref{Okt5aaz}).


\end{proof}





\begin{lemma}\label{L16s} Let the eigenvalue $\mu=0$ is simple with the eigenfunction $\psi_x$. Then
$$
\mu(\tau)=c\tau^2+O(\tau^4),
$$
where $c$ is negative.
\end{lemma}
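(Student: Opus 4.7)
The plan is to apply Kato's analytic perturbation theory to the simple eigenvalue $\mu=0$ and identify the quadratic Taylor coefficient of the resulting analytic branch. Since $0$ is an isolated simple eigenvalue of $\mathbf{a}(\cdot,\cdot;0)$ with eigenfunction $u_*=\Psi_X$, and the family $\mathbf{a}(\cdot,\cdot;\tau)=\mathbf{a}+i\tau\mathbf{b}+\tau^2\mathbf{c}$ depends analytically on $\tau$, there exist analytic branches $\widehat{\mu}(\tau)$ and $\upsilon(\tau)\in H^1_{0,p}(\Omega)$ near $\tau=0$ with $\widehat{\mu}(0)=0$ and $\upsilon(0)=u_*$. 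The symmetry $\widehat{\mu}(-\tau)=\widehat{\mu}(\tau)$ recorded just before Lemma \ref{LD2a}, together with uniqueness of the analytic branch issuing from a simple eigenvalue, forces $\widehat{\mu}$ to be even in $\tau$, so the expansion takes the form $\widehat{\mu}(\tau)=c\tau^2+O(\tau^4)$ with no linear term (consistent with $\mathbf{b}(u_*,u_*)=0$, valid because $\mathbf{b}$ is antisymmetric on real arguments).

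To identify $c$ I would write $\upsilon(\tau)=u_*+\tau\upsilon_1+\tau^2\upsilon_2+\cdots$ and match powers of $\tau$ in $\mathbf{a}(\upsilon,v;\tau)=\widehat{\mu}(\tau)(\upsilon,v)_{L^2}$. At order $\tau$ the equation reduces to $\mathbf{a}(\upsilon_1,v)=-i\mathbf{b}(u_*,v)$; using integration by parts and the boundary conditions $u_*(0,Y)=u_*(\Lambda/2,Y)=0$, one has $\mathbf{b}(u_*,v)=-2\int_\Omega\Psi_{XX}\bar v\,dXdY$, which vanishes on odd test functions because $u_*$ is odd in $X$. Hence $\upsilon_1=i\eta_1$ with $\eta_1$ real and even, uniquely determined on the even subspace (where $\mathbf{a}$ is invertible thanks to $\mu_0<0<\mu_1$) by $\mathbf{a}(\eta_1,v)=-\mathbf{b}(u_*,v)$. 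Testing the order-$\tau^2$ equation against $u_*$ and using $\mathbf{a}(\upsilon_2,u_*)=0$, $\mathbf{c}(u_*,u_*)=\|u_*\|_{L^2(\Omega)}^2$, and $\mathbf{b}(\eta_1,u_*)=-\mathbf{b}(u_*,\eta_1)=\mathbf{a}(\eta_1,\eta_1)$ (real antisymmetry of $\mathbf{b}$ combined with the defining equation for $\eta_1$), I obtain the explicit formula
\begin{equation*}
c\,\|u_*\|_{L^2(\Omega)}^2=\|u_*\|_{L^2(\Omega)}^2-\mathbf{a}(\eta_1,\eta_1).
\end{equation*}

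The last step is to establish the strict sign $c<0$. One half is immediate: Corollary \ref{C15a} gives $\widehat{\mu}_1(\tau)<0$ for $\tau\in(0,\tau_*)$, so the first nonzero Taylor coefficient of the even analytic function $\widehat{\mu}$ is strictly negative, and hence $c\leq 0$. The main obstacle is excluding the degenerate case $c=0$, equivalently $\mathbf{a}(\eta_1,\eta_1)=\|u_*\|_{L^2}^2$, in which $\widehat{\mu}$ would vanish to order $\tau^4$ at the origin. I would rule this out by expanding $\eta_1=\sum_j\alpha_j\Phi_j$ in the even $\mathbf{a}$-eigenbasis, which gives $\alpha_j=-\mathbf{b}(u_*,\Phi_j)/\mu_j$ and the identity
\begin{equation*}
c\,\|u_*\|_{L^2(\Omega)}^2=\|u_*\|_{L^2(\Omega)}^2+\frac{\mathbf{b}(u_*,\Phi_0)^2}{|\mu_0|}-\sum_{j\geq 1}\frac{\mathbf{b}(u_*,\Phi_j)^2}{\mu_j},
\end{equation*}
and then constructing a trial function in the $H^1_0(\Omega,\tau)$ variational formulation---for instance $w=e^{-i\tau X}(u_*+i\tau\eta_1)$, orthogonalized against the ground-state eigenfunction $\phi_0(\tau)$---whose Rayleigh quotient has a strictly negative $\tau^2$ coefficient. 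The Rayleigh--Schrödinger identification of $c$ with this coefficient then forces $c<0$, completing the proof.
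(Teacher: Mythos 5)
Your expansion is the same Rayleigh--Schr\"odinger calculation the paper uses: you expand at the simple eigenvalue $0$ with $u_0=\Psi_X$, kill the linear coefficient by parity, and extract the quadratic coefficient by testing the order-$\tau^2$ equation against $u_0$. Your formula
$c\|u_*\|_{L^2(\Omega)}^2=\|u_*\|_{L^2(\Omega)}^2-\mathbf{a}(\eta_1,\eta_1)$
is in fact the correct one. Tracking the $(i\tau)^k$ coefficients through the form $\mathbf{a}+i\tau\mathbf{b}+\tau^2\mathbf{c}$ gives, at order $\tau^2$, $-\mathbf{a}(u_2,v)-\mathbf{b}(u_1,v)+(u_0,v)_{L^2}=c_2(u_0,v)_{L^2}$; testing with $v=u_0$ and using $\mathbf{b}(u_1,u_0)=\mathbf{a}(u_1,u_1)$ yields $c_2\|u_0\|^2=\|u_0\|^2-\mathbf{a}(u_1,u_1)$, i.e.\ your $c$, with the minus sign. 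The paper's printed final line, $c_2\|u_0\|^2=\|u_0\|^2+\mathbf{a}(u_1,u_1)$, has a sign error, and its closing phrase ``which proves positivity of $c_2$'' even contradicts the stated conclusion $c<0$. So your version of the identity is an improvement.

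You also correctly flag that the sign of $c$ does \emph{not} follow from the formula: since $u_1$ generically has a nonzero component along the even ground state $\Phi_0$ with $\mu_0<0$, the quantity $\mathbf{a}(u_1,u_1)=\sum_j\mu_j\alpha_j^2$ need not exceed $\|u_0\|^2$, nor even be positive. The paper's own proof simply stops at the (sign-flipped) formula and asserts the conclusion, so it too leaves this gap.

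Where your argument has a concrete problem is the step you describe as immediate. Corollary~\ref{C15a} is stated under the hypothesis $\mu_1>0$, and in that case the simple zero eigenvalue at $\tau=0$ is $\widehat\mu_1$, for which $\widehat\mu_1(\tau)<0$ on $(0,\tau_*)$ indeed forces $c\le 0$. But Lemma~\ref{L16s} is applied in the proof of Theorem~\ref{TF9a} on the interval $t\in(t_0,t_0+\delta_0)$, where $\mu_1(t)<0$; there the simple zero eigenvalue at $\tau=0$ is $\widehat\mu_2$, not $\widehat\mu_1$, and the available bound from Lemma~\ref{L26a}(ii) is only $\mu_1<\widehat\mu_2(\tau)<\nu_1^{*0}$, which does not pin down the sign of $\widehat\mu_2$ near $\tau=0$. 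In fact (\ref{M19ba}) gives $\widehat\mu_2(t,\tau)>0$ for $\tau$ away from $0$, so a global sign statement analogous to Corollary~\ref{C15a} is false in this regime, and your ``one half is immediate'' step does not carry over to the case the lemma is actually needed for. (The parenthetical ``$\mathbf a$ is invertible thanks to $\mu_0<0<\mu_1$'' has the same issue, although the invertibility itself is still fine since $0$ is not an even eigenvalue when $\mu_1\neq 0$.) The trial-function argument you sketch for excluding $c=0$ would then have to carry the full weight of proving $c<0$, not merely the strictness, and as written it is only a plan. So: right expansion, right identity, correct diagnosis that the paper's proof is incomplete, but the assertion $c<0$ --- which is the entire content of the lemma --- is still not established either by your argument or by the paper's.
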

\begin{proof}
We are looking for the eigenvalue and eigenfunction in the form
$$
\mu(\tau)=c_1\tau+c_2\tau^2+\cdots \;\; \mbox{and} \;\; u(\tau)=u_0+i\tau u_1+(i\tau)^2u_2+\cdots
$$
then
$$
{\bf a}(u_0,w)=0\;\;\mbox{for all $w\in H^1_0(\Omega)$}.
$$
Therefore $u_0=\psi_x$. Furthermore,
$$
i\tau{\bf a}(u_1,w)+i\tau{\bf b}(u_0,w)=c_1(u_0,w)_{L^2(\Omega)}\;\;\mbox{for all $w\in H^1_0(\Omega)$}.
$$
Choosing $w=u_0$ we see that $c_1=0$. Taking $w=u_1$, we get
\begin{equation}\label{F11a}
{\bf a}(u_1,u_1)+{\bf b}(u_0,u_1)=0\;\;\Rightarrow\;\;{\bf b}(u_1,u_0)={\bf a}(u_1,u_1).
\end{equation}
Equation for $u_2$ is
$$
{\bf a}(u_2,w)+{\bf b}(u_1,w)+(u_0,w)_{L^2(\Omega)}=-c_2(u_0,w)_{L^2(\Omega)}\;\;\mbox{for all $w\in H^1_0(\Omega)$}.
$$
Taking $w=u_0$ we get
$$
{\bf b}(u_1,u_0)+(u_0,u_0)_{L^2(\Omega)}=-c_2(u_0,u_0)_{L^2(\Omega)}.
$$
Using (\ref{F11a}), we arrive at
$$
c_2||u_0||^2_{L^2(\Omega)}=||u_0||^2_{L^2(\Omega)}+{\bf a}(u_1,u_1),
$$
which proves positivity of $c_2$.

\end{proof}

\section{Subharmonic spectral problems}

 Let $M$ be a positive integer.
As is known (see for example \cite{Na}, \cite{KT}),  $M\Lambda$-periodic, even solutions to the problem (\ref{J17ax}) are linear combinations of functions
\begin{equation}\label{Sept14a}
e^{i\tau_jX}w_j(X,Y)+e^{-i\tau_jX}w_j(-X,Y),
\end{equation}
where only such $\tau=\tau_j=j\tau_*/M$ and $w=w_j$, $j=0,1,\ldots, M-1$, are taken which solves the problem (\ref{Sept7a}) with $\mu=0$.
Therefore the knowledge of the characteristic exponents $\tau_j$ helps to describe $M$-subharmonic solutions.

The spectral problem (\ref{J17ax}) can be considered on the functions $w$ which are even and has period $M\Lambda$. We shall call this spectral problem $M$-subharmonic spectral problem. To give a variational formulation of this spectral problem we introduce
$$
D_M=\{(X,Y)\in {\mathcal D}\,:\,0<X<M\Lambda/2\},\;\;S_M=\{(X,Y)\in {\mathcal S}\,:\,0<X<M\Lambda/2\},
$$
and denote  by $H^1_{0}(D_M)$ the space of functions in $H^1(D_M)$ which vanish for $Y=0$. We put
$$
a_M(u,v)=\int_{D_M}\Big(\nabla u\cdot\nabla\overline{v}-\omega'(\Psi)u\overline{v}\Big)dXdY-\int_{0}^{M\Lambda/2}\rho(X)u\overline{v}dS,
$$
Using that $\partial_Xw(0,Y)=0$ and $\partial_Xw(M\Lambda/2,Y)=0$, we arrive at the following variational formulation for the spectral problem (\ref{J17ax}) on even $M\Lambda$-periodic functions: find a  function $\phi\in H^1_0(D_M)$ and the number $\mu=\mu^{(M)}\in \Bbb R$ satisfying 
\begin{equation}\label{Sept9a}
a_M(\phi,v)=\mu^{(M)}(\phi,v)_{L^2(D_M)}\;\;\mbox{for all $v\in H^1_0(D_M)$}.
\end{equation}
As it is known the pair $(\phi,\mu^{(M)})$ solves the problem (\ref{Sept9a}) if and only if $\mu^{(M)}$ is an eigenvalue of the problem (\ref{J17ax}), considered on even, $M\Lambda$-periodic functions, and $\phi$, after a natural extension, is a corresponding eigenfunction.
We numerate the eigenvalues accounting their multiplicity as
$$
\mu^{(M)}_0\leq \mu^{(M)}_1\leq\cdots ,\;\;\mu^{(M)}_j\to\infty\;\;\mbox{as}\;\;j\to\infty.
$$

Denote the number of eigenvalues of the problem (\ref{Sept9a}) which are less than  zero by $n_0(M)$ and by $n(M)$ if we include zero  eigenvalues, i.e.
\begin{eqnarray*}
&&n_0(M)=\mbox{the number of $j$ such that}\;\;\mu^{(M)}_j<0,\\
&&n(M)=\mbox{the number of $j$ such that}\;\;\mu^{(M)}_j\leq 0.
\end{eqnarray*}

\begin{proposition}\label{D5a} Let $\mu_1=0$. Then for every $M=1,2,\ldots$
\begin{equation}\label{Sept5b}
n_0(M)=M\;\;\mbox{when $M$ is odd and}\;\;n(M)=M+1\;\;\mbox{when $M$ is even}
\end{equation}
and
\begin{equation}\label{Ja21a}
n(M)=M+1\;\;\mbox{when $M$ is odd and}\;\;n(M)=M+2\;\;\mbox{when $M$ is even}.
\end{equation}
\end{proposition}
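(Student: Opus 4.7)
The plan is to decompose the spectral problem (\ref{Sept9a}) via a Bloch transform into sectors indexed by the quasi-momenta $\tau_j = j\tau_*/M$ and to count non-positive eigenvalues sector by sector using the estimates of Section \ref{S26a}.

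First I would extend any $\phi\in H^1_0(D_M)$ to a real, even, $M\Lambda$-periodic function on the strip and apply the Bloch decomposition $\phi = \sum_{j=0}^{M-1}\phi^{(j)}$ with $\phi^{(j)}(X+\Lambda, Y) = e^{2\pi ij/M}\phi^{(j)}(X,Y)$. Reality and evenness force $\overline{\phi^{(j)}(X,Y)} = \phi^{(j)}(-X,Y) = \phi^{(M-j)}(X,Y)$, which pairs the Bloch sectors into a self-paired $j = 0$, a self-paired $j = M/2$ when $M$ is even, and $\lfloor(M-1)/2\rfloor$ genuine pairs $\{j,M-j\}$ with $0 < j < M/2$. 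Writing $\phi^{(j)} = e^{i\tau_j X}u_j$, the constraint becomes $u_j(-X,Y) = \overline{u_j(X,Y)}$, the form $a_M$ and the $L^2$-inner product decouple across these sectors, and (\ref{Sept9a}) reduces to a direct sum of problems each equivalent to (\ref{Nov1bb}) at $\tau = \tau_j$ restricted to the real subspace so defined. Using the antilinear involution $u\mapsto\overline{u(-X,Y)}$, which commutes with the generalized operator, every complex eigenspace at $\tau_j$ can be normalized to satisfy this reality constraint, so the $j$-th sector's spectrum matches $\{\widehat{\mu}_k(\tau_j)\}_{k\ge 0}$ with multiplicity. For $j = 0$ the reduction yields directly the even $\Lambda$-periodic problem of Section \ref{SJ24} with spectrum $\{\mu_k\}$.

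Next I would count. By Proposition \ref{PF10a} the eigenvalue $\mu_1 = 0$ is simple, so $\mu_0 < 0 = \mu_1 < \mu_2$, and the sector $j = 0$ contributes exactly one strictly negative and one zero eigenvalue. For each sector with $\tau_j\in(0,\tau_*)$, (\ref{F3a}) together with Lemma \ref{L26a}(i),(iii) gives $\widehat{\mu}_0(\tau_j), \widehat{\mu}_1(\tau_j) < 0 < \widehat{\mu}_2(\tau_j) \le \widehat{\mu}_3(\tau_j)$, so each such sector contributes exactly two strictly negative and no zero eigenvalues. Summing the contributions over all sectors yields the claimed counts: for $M$ odd, $n_0(M) = M$ and $n(M) = M+1$; for $M$ even, $n_0(M) = M+1$ and $n(M) = M+2$.

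The main obstacle is the self-paired sector $\tau_{M/2} = \tau_*/2$ when $M$ is even, where the combined reality and evenness constraints in Bloch variables are strictly stronger than the generic relation $u(-X,Y) = \overline{u(X,Y)}$ (an additional linear involution $u\mapsto e^{-i\tau_* X}u(-X,Y)$ intervenes), and one must verify that both negative generalized eigenvalues $\widehat{\mu}_0(\tau_*/2), \widehat{\mu}_1(\tau_*/2)$ survive in the real-even subspectrum rather than being absorbed into the orthogonal complement under this extra involution; once this matching is established, the sector-by-sector bookkeeping above closes the proof.
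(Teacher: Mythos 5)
Your route is essentially the one the paper takes: use the Floquet description (\ref{Sept14a}) to reduce the even $M\Lambda$-periodic spectral problem to a direct sum over quasi-momenta $\tau_j=j\tau_*/M$, $0\le j\le M/2$, and count non-positive eigenvalues sector by sector via (\ref{F3a}) and Lemma \ref{L26a}. The difference is that you spell out the Bloch decomposition and the reality/evenness constraints explicitly, and you correctly single out the self-paired sector $\tau_{M/2}=\tau_*/2$ (for $M$ even) as requiring a separate verification.

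That verification is a genuine gap, and it does not close the way you hope. Writing $\phi^{(M/2)}$ in the $\Lambda$-antiperiodic picture, the combined reality, evenness and antiperiodicity constraints force $\phi^{(M/2)}(\Lambda/2,Y)=0$ and $\phi^{(M/2)}_X(0,Y)=0$; restricted to $D$ this is precisely the mixed problem on $H^1_{0*0}(D)$, whose spectrum is $\{\nu^{*0}_k\}$ rather than the full antiperiodic spectrum $\{\widehat{\mu}_k(\tau_*/2)\}=\{\nu^{*0}_k\}\sqcup\{\nu^{0*}_k\}$. By Proposition \ref{P22} with $\mu_1=0$ one has $\nu^{*0}_0<0<\nu^{*0}_1$, so the $j=M/2$ sector contributes exactly \emph{one} negative eigenvalue, not two: the eigenfunction corresponding to $\nu^{0*}_0$ is odd antiperiodic, and plugging it into (\ref{Sept14a}) yields identically zero. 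With that correction the sector-by-sector bookkeeping gives $n_0(M)=M$ and $n(M)=M+1$ for even $M$ as well, one less than (\ref{Ja21a}) claims; note also that (\ref{Sept5b}) and (\ref{Ja21a}) assign $n(M)$ the values $M+1$ and $M+2$ simultaneously when $M$ is even, so the statement itself needs repair. In short, your identification of the obstruction at $\tau_*/2$ is correct, but the resolution you defer would in fact overturn the even-$M$ count rather than confirm it; the odd-$M$ case, which is what the later theorems actually use, is unaffected.
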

\begin{proof} Using that
$$
\widehat{\mu}_j(\tau)=\widehat{\mu}_j(-\tau)=\widehat{\mu}_j(\tau_*+\tau)\;\;\mbox{for $j=0,1,2$},
$$
we conclude that $\widehat{\mu}_j(\tau)=\widehat{\mu}_j(\tau_*-\tau)$. Due to the description of even, $M\Lambda$ periodic solution given by (\ref{Sept14a}), the number $n(M)$ is equal to the number of indexes $k$ and $m$ such that
\begin{equation}\label{Ja22a}
\widehat{\mu}_0\Big(\frac{k}{M}\Big)\leq 0,\;\;\widehat{\mu}_1\Big(\frac{m}{M}\Big)\leq 0,
\end{equation}
where $0\leq k,m\leq M/2$. Therefore
$$
n(M)=M+1\;\;\mbox{when $M$ is odd and}\;\;n(M)=M+2\;\;\mbox{when $M$ is even},
$$
which proves (\ref{Ja21a}).

Since the equality in (\ref{Ja22a}) is reached only when  $m=0$, we get (\ref{Sept5b}).
\end{proof}

A necessary condition for $M$ subharmonic bifurcation is the absence of  zero eigenvalues of the Frechet derivative corresponding to $M$-subharmonic problem. The next proposition deals with this problem.

\begin{proposition}\label{P16b} If $\mu_1>0$ then the numbers $\mu^{(M)}_j$ are different from $0$.

\end{proposition}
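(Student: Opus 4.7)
The strategy is to argue by contradiction: suppose $\mu^{(M)}_j=0$ for some $j$. Then there exists a nonzero even, $M\Lambda$-periodic function $\phi\in H^1_0(D_M)$ satisfying the variational identity (\ref{Sept9a}) with $\mu^{(M)}=0$. Extending $\phi$ by evenness and $M\Lambda$-periodicity, one obtains a nonzero, even, $M\Lambda$-periodic solution of the spectral problem (\ref{J17ax}) with $\mu=0$. The Bloch--Floquet representation recalled before (\ref{Sept14a}) then expresses $\phi$ as a linear combination of functions of the form
\[
e^{i\tau_j X}w_j(X,Y)+e^{-i\tau_j X}w_j(-X,Y),\qquad \tau_j=j\tau_*/M,\ j=0,1,\ldots,M-1,
\]
where each $w_j$ is a $\Lambda$-periodic solution of (\ref{Sept7a}) with $\mu=0$ at the quasi-momentum $\tau=\tau_j$. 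So it suffices to show that no such combination can be nonzero when $\mu_1>0$.

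First I would handle the indices $j=1,\ldots,M-1$, for which $\tau_j\in(0,\tau_*)$. Corollary \ref{C15a} gives $\widehat{\mu}_1(\tau_j)<0$ and $\widehat{\mu}_2(\tau_j)>0$; since the eigenvalues are ordered and tend to $+\infty$, this forces $\widehat{\mu}_k(\tau_j)>0$ for all $k\geq 2$ and $\widehat{\mu}_k(\tau_j)<0$ for $k=0,1$. In particular, $0$ is not an eigenvalue of the problem (\ref{Sept7a}) at any $\tau_j\in(0,\tau_*)$, so no $w_j\not\equiv 0$ can exist for $j=1,\ldots,M-1$.

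It remains to treat $j=0$, where $\tau_0=0$. Here $0$ \emph{is} an eigenvalue of (\ref{Sept7a}), but by the last assertion of Corollary \ref{C15a} this eigenvalue is simple and its eigenspace is spanned by the odd function $u_*=\Psi_X$ (using that the assumption $\mu_1>0$ excludes any even eigenfunction at $\mu=0$, in view of the formula $\widehat{\mu}_1(0)=\min(\mu_1,0)=0$ being realized only in the odd sector $\nu^{00}_0=0$). For the Bloch term at $\tau_0=0$ one has
\[
e^{i\cdot 0\cdot X}u_*(X,Y)+e^{-i\cdot 0\cdot X}u_*(-X,Y)=u_*(X,Y)+u_*(-X,Y)=0,
\]
because $u_*$ is odd in $X$. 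Thus the $j=0$ contribution to any admissible even combination vanishes identically. Combining the two cases, no nonzero even $M\Lambda$-periodic eigenfunction at $\mu=0$ can exist, so $\mu^{(M)}_j\neq 0$ for every $j$. The only subtle point is the $\tau=0$ case, where one has to use both the simplicity of the zero eigenvalue and the parity of its eigenfunction $\Psi_X$ to see that the even projection of the Bloch mode is trivial.
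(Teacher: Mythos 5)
Your proof is correct and follows essentially the same route as the paper's: the paper's proof is a one-line pointer to Corollary~\ref{C15a} and the Bloch--Floquet description~(\ref{Sept14a}) at the start of the section, and you have simply unwound that pointer, splitting the quasi-momenta $\tau_j=j\tau_*/M$ into the cases $\tau_j\in(0,\tau_*)$ (where $\widehat{\mu}_1<0<\widehat{\mu}_2$ rules out a zero eigenvalue) and $\tau_0=0$ (where the simple zero eigenfunction $\psi_x$ is odd, so its even Bloch combination vanishes).
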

\begin{proof} The proof of this proposition follows directly from Corollary \ref{C15a} and from description of zero eigenvalues of the Frechet derivative for $M$-subharmonic problem given at the beginning of this section.

\end{proof}

\section{Branches of Stokes waves}

\subsection{Uniform stream solution, dispersion equation}

The uniform stream solution $\Psi=U(y)$ with the constant depth $\eta =d$ and $\lambda=1$,  satisfies the problem
\begin{eqnarray}\label{X1}
&&U^{''}+\omega(U)=0\;\;\mbox{on $(0;d)$},\nonumber\\
&&U(0)=0,\;\;U(d)=1,\nonumber\\
&&\frac{1}{2}U'(d)^2+d=R.
\end{eqnarray}
Let $s=U'(0)$ and $s>s_0:=2\max_{\tau\in [0,1]}\Omega(\tau)$, where
$$
\Omega(\tau)=\int_0^\tau \omega(p)dp.
$$
Then the problem (\ref{X1}) has a solution $(U,d)$ with a strongly monotone function $U$ if
\begin{equation}\label{M6a}
{\mathcal R}(s):=\frac{1}{2}s^2+d(s)-\Omega(1)=R.
\end{equation}
In this case $(U,d)$ is found from the relations
\begin{equation}\label{F22a}
y=\int_0^U\frac{d\tau}{\sqrt{s^2-2\Omega(\tau)}},\;\;d=d(s)=\int_0^1\frac{d\tau}{\sqrt{s^2-2\Omega(\tau)}}.
\end{equation}
The equation (\ref{M6a}) is solvable if $R>R_c$,
\begin{equation}\label{F27a}
R_c=\min_{s\geq s_0}{\mathcal R}(s).
\end{equation}
We denote by $s_c$ the point where the minimum in (\ref{F27a}) is attained.

Existence of small amplitude Stokes waves is determined by the dispersion equation (see, for example, \cite{KN14}). It is defined as follows.
 The strong monotonicity of $U$ guarantees that the problem
\begin{equation}\label{Okt6b}
\gamma^{''}+\omega'(U)\gamma-\tau^2\gamma=0,\;\; \gamma(0,\tau)=0,\;\;\gamma(d,\tau)=1
\end{equation}
has a unique solution $\gamma=\gamma(y,\tau)$ for each $\tau\in\Bbb R$, which is even with respect to $\tau$ and depends analytically on $\tau$.
Introduce the function
\begin{equation}\label{Okt6ba}
\sigma(\tau)=\kappa\gamma'(d,\tau)-\kappa^{-1}+\omega(1),\;\;\kappa=\Psi'(d).
\end{equation}
It depends also analytically on $\tau$ and it is strongly increasing with respect to $\tau>0$. Moreover it is an even function.
The dispersion  equation (see, for example \cite{KN14})  is the following
\begin{equation}\label{Okt6bb}
\sigma(\tau)=0.
\end{equation}
It has a positive solution if
\begin{equation}\label{D17a}
\sigma(0)<0.
\end{equation}
By \cite{KN14} this is equivalent to $s+d'(s)<0$ or what is the same
\begin{equation}\label{D19b}
1<\int_0^d\frac{dy}{U'^2(y)}.
\end{equation}
The left-hand side here is equal to $1/F^2$ where $F$ is the Froude number (see \cite{We}). Therefore (\ref{D19b}) means that $F<1$, which is well-known condition for existence of  water waves of small amplitude.
Another equivalent formulation is given by requirement (see, for example \cite{KN11})
\begin{equation}\label{M3aa}
s\in (s_0,s_c)\;\;\mbox{and satisfies (\ref{M6a})}.
\end{equation}
The existence of such $s$ is guaranteed by $R\in (R_c,R_0)$. Moreover in this case the equation ${\mathcal R}(s)=R$ has exactly two solutions $s_-$ and $s_+$, $s_0<s_+<s_c<s_-$. The corresponding solutions to (\ref{X1}) are given by (\ref{F22a}) and we denote them by
\begin{equation}\label{KKKb}
(U_+,d_+)\;\;\mbox{and}\;\;(U_-,d_-).
\end{equation}


The function $\sigma$ has the following asymptotic representation
$$
\sigma(\tau)=\kappa\tau +O(1)\;\;\mbox{for large $\tau$}
$$
and equation (\ref{Okt6bb}) has a unique positive root, which will be denoted by $\tau_*$. It is connected with $\Lambda_0$ by the relation
\begin{equation}\label{J4aq}
\tau_*=\frac{2\pi}{\Lambda_0}.
\end{equation}
The function $\gamma (y,\tau)$ is positive in $(0,d]$ for $\tau>\tau_*$.

Let
\begin{equation}\label{F28a}
\rho_0=\frac{1+\Psi'(d)\Psi^{''}(d)}{\Psi'(d)^2}.
\end{equation}
We note that
$$
\frac{1+\Psi'(d)\Psi^{''}(d)}{\Psi'(d)^2}=\kappa^{-2}-\frac{\omega(1)}{\kappa}
$$
and hence another form for (\ref{Okt6ba}) is
\begin{equation}\label{M21aa}
\sigma(\tau)=\kappa\gamma'(d,\tau)-\kappa\rho_0.
\end{equation}

The spectral problem (\ref{J17ax}) takes the form
\begin{eqnarray}\label{J7a}
&&\Delta w+\omega'(U)w+\mu w=0 \;\;\mbox{for $X\in\Bbb R$ and $0<Y<d$},\nonumber\\
&&\partial_Y w-\rho_0 w=0\;\;\mbox{for $Y=d$},\nonumber\\
&&w=0\;\;\mbox{for $Y=0$},
\end{eqnarray}
where $w$ is even, $\Lambda=2\pi/\tau_*$-function. One can verify that $\mu_0<0$ and corresponding eigenfunction does not depend on $X$, $\mu_1=0$ and corresponding eigenfunction is $\cos(\tau_*X)\gamma(Y;\tau_*)$ and $\mu_2>0$. For small $t$, $\mu_0<0$ and $\mu_2>0$. We assume that
\begin{equation}\label{Ja7aa}
\mu_1(t)>0\;\;\mbox{for small positive $t$}.
\end{equation}
The validity of this assumption will be discussed in forthcoming paper.

\subsection{Partial hodograph transform}\label{SF22}

In what follows we will study  branches of Stokes waves $(\Psi(X,Y;t),\xi(X;t))$ of period $\Lambda(t)$, $t\in\Bbb R$, started from the uniform stream  at $t=0$.
It is convenient to make the following change of variables
\begin{equation}\label{D11a}
x=\lambda X,\;\;y=Y,\;\;\lambda=\frac{\Lambda(0)}{\Lambda(t)}
\end{equation}
in order to deal with the problem with a fixed period. As the result we get
\begin{eqnarray}\label{Okt6aa}
&&\Big(\lambda^2\partial_x^2+\partial_y^2\Big)\psi+\omega(\psi)=0\;\;\mbox{in $D_\eta$},\nonumber\\
&&\frac{1}{2}\Big(\lambda^2\psi_x^2+\psi_y^2\Big)+\eta=R\;\;\mbox{on $B_\eta$},\nonumber\\
&&\psi=1\;\;\mbox{on $B_\eta$},\nonumber\\
&&\psi=0\;\;\mbox{for $y=0$},
\end{eqnarray}
where
$$
\psi(x,y;t)=\Psi(\lambda^{-1}x,y;t)\;\;\mbox{and}\;\;\eta(x;t)=\xi(\lambda^{-1} x;t).
$$
Here all functions have the  same period $\Lambda_0:=\Lambda(0)$, $D_\eta$ and $B_\eta$  are the domain and the free surface  after the change of variables (\ref{D11a}).

Due to the assumption (\ref{J27ba}) we can introduce the variables
$$
q=x,\;\;p=\psi.
$$
Then
$$
q_x=1,\;\;q_y=0,\;\;p_x=\psi_x,\;\;p_y=\psi_y,
$$
and
\begin{equation}\label{F28b}
\psi_x=-\frac{h_q}{h_p},\;\;\psi_y=\frac{1}{h_p},\;\;dxdy=h_pdqdp.
\end{equation}

System (\ref{Okt6aa}) in the new variables takes the form
\begin{eqnarray}\label{J4a}
&&\Big(\frac{1+\lambda^2h_q^2}{2h_p^2}+\Omega(p)\Big)_p-\lambda^2\Big(\frac{h_q}{h_p}\Big)_q=0\;\;\mbox{in $Q$},\nonumber\\
&&\frac{1+\lambda^2h_q^2}{2h_p^2}+h=R\;\;\mbox{for $p=1$},\nonumber\\
&&h=0\;\;\mbox{for $p=0$}.
\end{eqnarray}
Here
$$
Q=\{(q,p)\,:\,q\in\Bbb R\,,\;\;p\in (0,1)\}.
$$
The uniform stream solution corresponding to the solution $U$ of (\ref{X1}) is
\begin{equation}\label{M4c}
H(p)=\int_0^p\frac{d\tau}{\sqrt{s^2-2\Omega(\tau)}},\;\;s=U'(0)=H_p^{-1}(0).
\end{equation}
One can check that
\begin{equation}\label{J18aaa}
H_{pp}-H_p^3\omega(p)=0
\end{equation}
or equivalently
\begin{equation}\label{J18aa}
\Big(\frac{1}{2H_p^2}\Big)_p+\omega(p)=0.
\end{equation}
Moreover it satisfies the boundary conditions
\begin{equation}\label{M4ca}
\frac{1}{2H_p^2(1)}+H(1)=R,\;\;H(0)=0.
\end{equation}
The problem (\ref{J4a}) has a variational formulation (see \cite{CSS}) and the potential is given by
\begin{equation}\label{M2a}
f(h;\lambda)=\int_{-\Lambda_0/2}^{\Lambda_0/2}\int_0^1\Big (\frac{1+\lambda^2h_q^2}{2h_p^2}-h+R-(\Omega(p)-\Omega(1))\Big)h_pdqdp.
\end{equation}

 Then according to \cite{KL1} there exists a branch of solutions to (\ref{J4a})
\begin{equation}\label{J4ac}
h=h(q,p;t):\Bbb R)\rightarrow C^{2,\alpha}_{pe}(\overline{Q}),\;\;\lambda=\lambda(t):\Bbb R\rightarrow (0,\infty),
\end{equation}
which  has a real analytic reparametrization locally around each $t$.

\section{Bifurcation analysis}

Here we present two equation for finding bifurcation points in $q,p$ variables. One of them is defined by a boundary value problem in a two-dimensional domain and another one including a Dirichlet-Neumann operator is defined one a part of one-dimensional boundary. It was proved in Sect. \ref{SOkt10b} that the Frechet derivatives of operators in these two formulations have the same number of negative eigenvalues and the same dimension of the kernels. So both of them can be used in analysis of the Morse index and the corresponding crossing number. The first bifurcation $2$D problem is useful in analysis of the number of negative eigenvalues and the second one is more convenient for application of general bifurcation results.

Both formulations can be easily extended for study of subharmonic bifurcations, see Sect. \ref{SA22a} In Sect. \ref{SOkt10a} we give an explicit connection between the Frechet derivatives in $(x,y)$ and $(q,p)$ variables. This material is borrowed from \cite{Koz1}, where all missing proofs can be found.


\subsection{First formulation of the bifurcation equation}\label{SA23a}

In order to find bifurcation points and bifuracating solutions we put $h+w$ instead of $h$ in (\ref{J4a}) and introduce the operators
\begin{eqnarray*}
&&{\mathcal F}(w;t)=\Big(\frac{1+\lambda^2(h_q+w_q)^2}{2(h_p+w_p)^2}\Big)_p
-\Big(\frac{1+\lambda^2h_q^2}{2h_p^2}\Big)_q\\
&&-\lambda^2\Big(\frac{h_q+w_q}{h_p+w_p}\Big)_q+\lambda^2\Big(\frac{h_q}{h_p}\Big)_q
\end{eqnarray*}
and
$$
{\mathcal G}(w;t)=\frac{1+\lambda^2(h_q+w_q)^2}{2(h_p+w_p)^2}-\frac{1+\lambda^2h_q^2}{2h_p^2}+w
$$
acting on $\Lambda_0$-periodic, even functions $w$ defined in $Q$. After some cancelations we get
\begin{equation}\label{A23a}
{\mathcal F}=\Big(\frac{\lambda^2h_p^2(2h_q+w_q)w_q-(2h_p+w_p)(1+\lambda^2h_q^2)w_p}{2h_p^2(h_p+w_p)^2}\Big)_p
-\lambda^2\Big(\frac{h_pw_q-h_qw_p}{h_p(h_p+w_p)}\Big)_q
\end{equation}
and
\begin{equation}\label{A23aa}
{\mathcal G}=\frac{\lambda^2h_p^2(2h_q+w_q)w_q-(2h_p+w_p)(1+\lambda^2h_q^2)w_p}{2h_p^2(h_p+w_p)^2}+w.
\end{equation}
Both these functions are well defined  for small $w_p$.
Then the problem for finding solutions close to $h$ is the following
\begin{eqnarray}\label{F19a}
&&{\mathcal F}(w;t)=0\;\;\mbox{in $Q$}\nonumber\\
&&{\mathcal G}(w;t)=0\;\;\mbox{for $p=1$}\nonumber\\
&&w=0\;\;\mbox{for $p=0$}.
\end{eqnarray}

Furthermore, the Frechet derivative (the linear approximation of the functions ${\mathcal F}$ and ${\mathcal G}$) is the following
\begin{equation}\label{J4aa}
Aw=A(t)w=\Big(\frac{\lambda^2h_qw_q}{h_p^2}-\frac{(1+\lambda^2h_q^2)w_p}{h_p^3}\Big)_p-\lambda^2\Big(\frac{w_q}{h_p}-\frac{h_qw_p}{h_p^2}\Big)_q
\end{equation}
and
\begin{equation}\label{J4aba}
{\mathcal N}w={\mathcal N}(t)w=(N w-w)|_{p=1},
\end{equation}
where
\begin{equation}\label{J4ab}
N w=N(t)w=\Big(-\frac{\lambda^2h_qw_q}{h_p^2}+\frac{(1+\lambda^2h_q^2)w_p}{h_p^3}\Big)\Big|_{p=1}.
\end{equation}
The eigenvalue problem for the Frechet derivative, which is important for the analysis of bifurcations of the problem
(\ref{F19a}), is the following
\begin{eqnarray}\label{M1a}
&&A(t)w=\mu w\;\;\mbox{in $Q$},\nonumber\\
&&{\mathcal N}(t)w=0\;\;\mbox{for $p=1$},\nonumber\\
&&w=0\;\;\mbox{for $p=0$}.
\end{eqnarray}

\subsection{Second formulation of the bifurcation equation}\label{SA23b}

There is another formulation for the bifurcating solutions, which is used the Dirichlet-Neumann operator. Let us consider the problem
\begin{eqnarray}\label{F19aa}
&&{\mathcal F}(w;t)=0\;\;\mbox{in $Q$},\nonumber\\
&&w=g\;\;\mbox{for $p=1$},\nonumber\\
&&w=0\;\;\mbox{for $p=0$}.
\end{eqnarray}

We define the operator ${\mathcal S}={\mathcal S}(g;h,t)$ by
\begin{equation}\label{F19ab}
{\mathcal S}(g;t)={\mathcal G}(w;t)|_{p=1},
\end{equation}
where $w$ is the solution of the problem (\ref{F19aa}). Then the equation for bifurcating solutions is
\begin{equation}\label{F19b}
{\mathcal S}(g;t)=0.
\end{equation}
 Here we note that spectral problem for the the Frechet derivative of the left-hand side in (\ref{F19b}) is given by
\begin{eqnarray}\label{F19ba}
&&A(t)w=0\;\;\mbox{in $Q$},\nonumber\\
&&N(t)w-w=\theta w \;\;\mbox{for $p=1$},\nonumber\\
&&w=0\;\;\mbox{for $p=0$}.
\end{eqnarray}
More exactly if we introduce the problem
\begin{eqnarray}\label{F19bb}
&&A(t)w=0\;\;\mbox{in $Q$},\nonumber\\
&&w=g \;\;\mbox{for $p=1$},\nonumber\\
&&w=0\;\;\mbox{for $p=0$},
\end{eqnarray}
then the operator
\begin{equation}\label{M3a}
Sg=(Nw-w)|_{p=1}
\end{equation}
is the Frechet derivative of the operator  (\ref{F19b}). The corresponding spectral problem is
\begin{equation}\label{M3b}
Sg=\theta g.
\end{equation}

To prove solvability of the problem (\ref{F19aa}) we consider first the Dirichlet problem
\begin{eqnarray}\label{F20b}
&&Aw=f,\nonumber\\
&&w=g,\;\;\mbox{for $p=1$}\nonumber\\
&&w=0\;\;\mbox{for $p=0$}.
\end{eqnarray}
Let
$$
a_1=\min_{\overline{Q}} h_p,\;\;a_2=||h||_{C^{2,\alpha}(Q)}.
$$

\begin{proposition}\label{PrM1} (Proposition 2.4, \cite{Koz1}) There exists a positive constant  $C$ depending on $a_1$ and $a_2$ such that if $g\in C^{2,\alpha}_{pe}(\Bbb R)$ and $f\in C^{0,\alpha}_{pe}(Q)$, $\alpha\in (0,\gamma]$,
 then the problem {\rm (\ref{F20b})} has a unique solution
$w\in C^{2,\alpha}_{pe}(Q)$, which satisfies the estimate
$$
||w||_{C^{2,\alpha}_{pe}(Q)}\leq C(||f||_{C^{0,\alpha}_{pe}(Q)}+||g||_{C^{2.,\alpha}_{pe}(\Bbb R)}).
$$
\end{proposition}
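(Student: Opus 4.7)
The plan is to identify $A$ as a uniformly elliptic operator in divergence form with $C^{0,\alpha}$ coefficients and reduce the proposition to standard linear elliptic theory. Writing
$$
Aw = \partial_p\!\left(\tfrac{\lambda^2 h_q}{h_p^2}\,w_q - \tfrac{1+\lambda^2 h_q^2}{h_p^3}\,w_p\right)
     - \partial_q\!\left(\tfrac{\lambda^2}{h_p}\,w_q - \tfrac{\lambda^2 h_q}{h_p^2}\,w_p\right),
$$
one checks that the symbol matrix of $-A$ has determinant $\lambda^2/h_p^4$ and positive trace, so its eigenvalues are bounded away from $0$ and $\infty$ in terms of $a_1$, $a_2$, and $\lambda=\lambda(t)$. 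Because $h_p\ge a_1>0$ and $h\in C^{2,\alpha}_{pe}(\overline Q)$, all coefficients of $A$ lie in $C^{0,\alpha}_{pe}(\overline Q)$, with norm controlled by $a_1$ and $a_2$.

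To handle the inhomogeneous Dirichlet data I would set $w = u + p\,g(q)$, which reduces the problem to an equation of the same form for $u$ with right-hand side still in $C^{0,\alpha}_{pe}(\overline Q)$, homogeneous Dirichlet data on $p=0$ and $p=1$, and the required periodicity/evenness in $q$. Integration by parts, with boundary terms vanishing by $q$-periodicity and the Dirichlet conditions, then gives the identity
$$
\int_Q (Au)\,u\,dq\,dp = \int_Q \frac{1}{h_p^3}\Bigl[\,u_p^2 + \lambda^2(h_p u_q - h_q u_p)^2\,\Bigr]\,dq\,dp.
$$
Since $h_p\ge a_1$ and $|h_q|\le a_2$, the second square in the integrand controls a positive multiple of $u_q^2$ modulo a multiple of $u_p^2$, so together with the Poincar\'e inequality (available because $u=0$ on $p=0$) this yields coercivity of the associated symmetric bilinear form on the closed subspace of $H^1(Q)$ consisting of $\Lambda_0$-periodic even functions vanishing at $p=0,1$.

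Lax--Milgram then produces a unique weak solution $u$, and standard Schauder theory up to the flat boundaries $p=0,1$ (after identifying $q$ with a coordinate on the circle $\Bbb R/\Lambda_0\Bbb Z$, so that no corners appear) upgrades it to $u\in C^{2,\alpha}_{pe}(\overline Q)$ with an estimate $\|u\|_{C^{2,\alpha}}\le C(\|f\|_{C^{0,\alpha}}+\|g\|_{C^{2,\alpha}})$, where $C$ depends only on $a_1$, $a_2$, $\lambda$ and $\alpha$. Undoing the lift $w=u+p\,g$ yields the claimed bound for $w$. The main technical step I expect is the coercivity identity above, since positivity of $A$ at the linearized level is exactly the algebraic fact that makes the Dirichlet theory available without spectral assumptions; once this is in hand, the Schauder step is routine because the domain is a strip with straight horizontal boundaries and the coefficients are Hölder continuous.
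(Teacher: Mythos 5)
Your argument is correct and follows the route one would expect for a proposition of this type; the paper itself does not prove the statement but simply cites \cite{Koz1}, so no direct comparison is possible, but your plan (divergence-form ellipticity, lift of the Dirichlet data by $w=u+pg$, energy identity, Lax--Milgram, periodic-cylinder Schauder theory up to the flat boundaries) is precisely the standard machinery that applies here. The coercivity identity you state is correct: integrating $Au\cdot u$ by parts with the data vanishing at $p=0,1$ and using $q$-periodicity gives
\begin{equation*}
\int_Q (Au)\,u\,dq\,dp=\int_Q\frac{1+\lambda^2h_q^2}{h_p^3}\,u_p^2
+\lambda^2\int_Q\frac{u_q^2}{h_p}-2\lambda^2\int_Q\frac{h_q\,u_qu_p}{h_p^2},
\end{equation*}
which indeed equals $\int_Q h_p^{-3}\bigl[u_p^2+\lambda^2(h_pu_q-h_qu_p)^2\bigr]$, and the symbol computation ($\det=\lambda^2/h_p^4$, trace positive for $-A$) is also correct.

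Two small points worth flagging. First, the coercivity in the $u_q$ direction, via $h_p^2u_q^2\le 2(h_pu_q-h_qu_p)^2+2h_q^2u_p^2$, and the uniform ellipticity you invoke for Schauder both require $\lambda$ to be bounded \emph{below} away from zero, not merely above; you correctly add $\lambda$ to the list of quantities the constant depends on, whereas the statement in the paper lists only $a_1$ and $a_2$. In the paper's setting $\lambda(t)=\Lambda(0)/\Lambda(t)$ ranges over a compact interval bounded away from zero for the relevant $t$, so this is implicit, but it is a genuine ingredient that your proof correctly surfaces. Second, since $A$ has no zero-order term, an alternative to the coercivity route is to observe that the homogeneous Dirichlet problem for the uniformly elliptic $-A$ has only the trivial solution by the strong maximum principle (on the periodic cylinder), and then invoke the Fredholm alternative; this avoids the energy identity entirely and gives uniqueness more cheaply, though it is of course equivalent in substance. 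Either way, your argument is sound.
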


Next consider the nonlinear problem
\begin{eqnarray}\label{M7a}
&&{\mathcal F}(w;t)=f\;\;\mbox{in $Q$},\nonumber\\
&&w=g\;\;\mbox{for $p=1$},\nonumber\\
&&w=0\;\;\mbox{for $p=0$}.
\end{eqnarray}
It can be considered as a small perturbation of the problem (\ref{F20b}).
\begin{proposition}\label{PrA8} (Proposition 2.5, \cite{Koz1}) There exist positive number $\delta_*$ and a Constant $C$ depending on $a_1$ and $a_2$ such that if
$$
||f||_{C^{0,\gamma}_{pe}(Q)}+||g||_{C^{2,\gamma}_{pe}(\Bbb R)}\leq \delta\leq\delta_*,
$$
then there exists a unique solution $w\in C^{2,\gamma}_{pe}(Q)$ such that
$$
||w||_{C^{2,\gamma}_{pe}(Q)}\leq C\delta .
$$
\end{proposition}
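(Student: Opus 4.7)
The strategy is to recast (\ref{M7a}) as a fixed-point problem and apply the Banach contraction mapping theorem in a small ball of $C^{2,\gamma}_{pe}(Q)$, using Proposition \ref{PrM1} as the linear solvability input. Write ${\mathcal F}(w;t)=A(t)w+R(w;t)$, where $R(w;t):={\mathcal F}(w;t)-A(t)w$. Looking at the explicit expression (\ref{A23a}) one checks that the linear part in $w,w_p,w_q$ is exactly the operator $A(t)$ given in (\ref{J4aa}); hence $R(0;t)=0$ and $D R(0;t)=0$, so $R$ is a rational nonlinearity with denominator $h_p^2(h_p+w_p)^2$ whose Taylor expansion at $w=0$ starts at second order. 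Problem (\ref{M7a}) becomes
$$
A(t)w=f-R(w;t)\;\;\text{in}\;Q,\qquad w=g\;\text{on}\;p=1,\qquad w=0\;\text{on}\;p=0,
$$
and we define $T\colon w\mapsto\tilde w$, where $\tilde w$ is the unique solution supplied by Proposition \ref{PrM1} to the linear Dirichlet problem (\ref{F20b}) with right-hand side $f-R(w;t)$ and boundary data $g$.

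Next I would verify that $T$ preserves the closed ball $B_{C\delta}=\{w\in C^{2,\gamma}_{pe}(Q):\|w\|_{C^{2,\gamma}}\leq C\delta\}$ for $\delta\leq\delta_*$ small. Since $C^{2,\gamma}(Q)\hookrightarrow C^1(\overline Q)$, restricting to $B_{C\delta}$ with $C\delta<a_1/2$ keeps the denominator $(h_p+w_p)^2$ bounded below by $a_1^2/4$, so $R(\,\cdot\,;t)$ is a smooth (indeed analytic) Nemytskii-type map on $B_{C\delta}$. The algebra property of $C^{0,\gamma}(Q)$ combined with the quadratic vanishing of $R$ at the origin yields
$$
\|R(w;t)\|_{C^{0,\gamma}(Q)}\leq K\|w\|_{C^{2,\gamma}(Q)}^2,\qquad w\in B_{C\delta},
$$
with $K=K(a_1,a_2)$. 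Applying Proposition \ref{PrM1} gives $\|Tw\|_{C^{2,\gamma}}\leq C_0\bigl(\|f\|_{C^{0,\gamma}}+\|g\|_{C^{2,\gamma}}\bigr)+C_0K(C\delta)^2\leq C\delta$, provided one first picks $C\geq 2C_0$ and then $\delta_*$ so small that $C_0KC^2\delta_*\leq 1/2$.

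The analogous bilinear estimate, obtained by differentiating the Taylor expansion, gives $\|R(w_1;t)-R(w_2;t)\|_{C^{0,\gamma}}\leq K'\bigl(\|w_1\|_{C^{2,\gamma}}+\|w_2\|_{C^{2,\gamma}}\bigr)\|w_1-w_2\|_{C^{2,\gamma}}$ on $B_{C\delta}$, so $\|Tw_1-Tw_2\|_{C^{2,\gamma}}\leq 2C_0K'C\delta_*\|w_1-w_2\|_{C^{2,\gamma}}\leq \tfrac12\|w_1-w_2\|_{C^{2,\gamma}}$ for $\delta_*$ small. Banach's theorem then yields a unique fixed point $w\in B_{C\delta}$, the desired solution of (\ref{M7a}), with $\|w\|_{C^{2,\gamma}}\leq C\delta$. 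Evenness and $\Lambda_0$-periodicity pass to the limit because the subspace $C^{2,\gamma}_{pe}$ is closed and invariant under both $A(t)^{-1}$ (which is built from the even, periodic coefficients of $h$) and the Nemytskii map $R$, so every Picard iterate starting from $0$ stays in it.

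The main technical obstacle is the Hölder bookkeeping of the nonlinear remainder: one must estimate the rational expression in (\ref{A23a}) whose denominator $(h_p+w_p)^2$ degenerates as $w_p\to -h_p$, and control it in $C^{0,\gamma}$, which requires both the algebra property of Hölder spaces and the uniform pointwise lower bound $h_p+w_p\geq a_1/2$ that comes from the a priori smallness of $\|w\|_{C^{2,\gamma}}$. Once this quadratic-plus-Lipschitz estimate on $R$ is in hand, the rest of the argument is a textbook application of the contraction mapping principle and produces the constants $\delta_*,C$ depending only on $a_1,a_2$ (and $\gamma$), as claimed.
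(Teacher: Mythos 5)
Your argument is correct and matches the approach the paper alludes to: the paper itself does not reproduce the proof (it cites Proposition~2.5 of \cite{Koz1}) but notes that the nonlinear problem (\ref{M7a}) ``can be considered as a small perturbation of the problem (\ref{F20b})'', which is precisely the contraction-mapping strategy you carry out using Proposition \ref{PrM1} as the linear solvability input. The key technical points you flag --- the quasilinear structure of $\mathcal F$ (second derivatives of $w$ appear linearly with coefficients depending on $w_q,w_p$), the quadratic vanishing of the remainder $R=\mathcal F-A(t)w$ at $w=0$, the uniform lower bound $h_p+w_p\geq a_1/2$ needed to keep the rational nonlinearity analytic on the small ball, the Banach-algebra/product estimates in $C^{0,\gamma}$, and the observation that the whole iteration can be run inside the closed invariant subspace $C^{2,\gamma}_{pe}$ --- are exactly the ones that need to be verified, and your bookkeeping ($C\geq 2C_0$, then $\delta_*$ small enough that $C_0KC^2\delta_*\leq\tfrac12$ and the Lipschitz constant is $\leq\tfrac12$) is standard and sound, yielding constants depending only on $a_1,a_2$ (and $\gamma$) as claimed.
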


\subsection{Spectral problems (\ref{M1a}) and (\ref{F19ba}) in $(x,y)$ variables}\label{SOkt10a}

Let
\begin{equation}\label{J17aa}
F(q,p;\lambda)=\Gamma(q,h)h_p.
\end{equation}
Then
\begin{equation}\label{J17b}
AF=-\omega'(p)\Gamma-\lambda^2\Gamma_{xx}-\Gamma_{yy}.
\end{equation}
and
\begin{equation}\label{J18a}
-NF+F=\sqrt{\psi_x^2+\psi_y^2}\rho \Gamma-\lambda^2\psi_x\Gamma_x-\psi_y\Gamma_y,
\end{equation}
where
\begin{equation*}
\rho=
\rho(x;t)=\frac{(1+\lambda^2\psi_x\psi_{xy}+\psi_y\psi_{yy})}{\psi_y(\psi_x^2+\psi_y^2)^{1/2}}\Big|_{y=\eta(x;t)}.
\end{equation*}

Then the following assertion is proved in \cite{Koz1}.

\begin{lemma}\label{CorM1} Let $a=a(x,y)$ be positive, continuous, $\Lambda_0$-periodic function. If $\Gamma=\Gamma(x,y)$ satisfies
 the problem
\begin{eqnarray}\label{J17a}
&&(\lambda^2\partial_x^2+\partial_y^2)\Gamma+\omega'(\psi)\Gamma +\mu a\frac{1}{\psi_y}\Gamma=0\;\;\mbox{in $D_\eta$},\nonumber\\
&&(\lambda^2\nu_x\Gamma_x+\nu_y\Gamma_y-\rho \Gamma)_{p=1}=0\;\;\mbox{on $B_\eta$},\nonumber\\
&&\Gamma=0\;\;\mbox{for $x=0$},
\end{eqnarray}
then
\begin{eqnarray}\label{J18ba}
&&AF=\mu aF,\nonumber\\
&&NF-F=0\;\;\mbox{for $p=1$},\nonumber\\
&&F=0\;\;\mbox{for $p=0$}.
\end{eqnarray}
\end{lemma}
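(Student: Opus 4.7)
The plan is to deduce the three conclusions directly from the already-recorded identities \eqref{J17b}, \eqref{J18a}, together with the change of variables \eqref{F28b}. The computation is almost algebraic once those identities are in hand, so the proposal is short.

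First I would write $F = \Gamma h_p = \Gamma/\psi_y$, using $h_p = 1/\psi_y$ from \eqref{F28b}. Substituting this into identity \eqref{J17b} gives
\[
AF = -\omega'(p)\Gamma - \lambda^2 \Gamma_{xx} - \Gamma_{yy},
\]
and the hypothesis that $\Gamma$ solves the first equation of \eqref{J17a} rewrites the right-hand side as $\mu a \Gamma/\psi_y = \mu a F$. That establishes the first equation of \eqref{J18ba}. The bottom condition is even more direct: since $h(q,0)=0$ and $\Gamma$ vanishes on $y=0$ (I am reading ``$x=0$'' in the hypothesis as a typo for $y=0$, as required for consistency with the Dirichlet bottom condition $F|_{p=0}=0$ and with $F = \Gamma h_p$), we get $F(q,0)=\Gamma(q,0)\,h_p(q,0)=0$.

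For the surface condition I would translate the conormal condition for $\Gamma$ on the free surface $y=\eta(x)$ into the right-hand side of \eqref{J18a}. On $B_\eta$ the outward unit normal is $\nu = (-\eta_x,1)/\sqrt{1+\eta_x^2}$, and differentiating $\psi(x,\eta(x))\equiv 1$ gives $\eta_x = -\psi_x/\psi_y$, so $\sqrt{1+\eta_x^2} = \sqrt{\psi_x^2+\psi_y^2}/\psi_y$. Substituting these into the boundary condition $\lambda^2 \nu_x \Gamma_x + \nu_y \Gamma_y - \rho\Gamma = 0$ and clearing denominators yields
\[
\lambda^2 \psi_x \Gamma_x + \psi_y \Gamma_y - \rho\,\Gamma\sqrt{\psi_x^2+\psi_y^2} = 0
\]
at $p=1$. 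The left-hand side is exactly $-(-NF+F)$ by \eqref{J18a}, so $NF - F = 0$ on $p=1$, which is the second conclusion.

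The main ``obstacle'' is purely bookkeeping: one must be careful with the scaled Laplacian $\lambda^2\partial_x^2+\partial_y^2$ and its natural conormal $\lambda^2\nu_x\partial_x+\nu_y\partial_y$, so that the boundary identity matches the form of $-NF+F$ given in \eqref{J18a}. Otherwise nothing has to be proved beyond invoking the formulas for $AF$ and $NF$ derived earlier in \cite{Koz1} and substituting.
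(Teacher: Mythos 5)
Your argument is correct and follows exactly the route the paper sets up: it records the identities \eqref{J17aa}, \eqref{J17b}, \eqref{J18a} immediately before the lemma and then cites \cite{Koz1} for the conclusion, so the intended proof is precisely the substitution you carry out. Your reading of ``$\Gamma=0$ for $x=0$'' as a typo for $y=0$ is right (it is forced by $F|_{p=0}=0$ and the change of variables $p=\psi$, which sends $p=0$ to the bottom $y=0$), and the normal-vector bookkeeping on $B_\eta$ with the scaled conormal $\lambda^2\nu_x\partial_x+\nu_y\partial_y$ checks out.
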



\subsection{Comparison of two spectral problems}\label{SOkt10b}

Here we will compare the nagative spectrum of the following spectral problems
\begin{eqnarray}\label{F20a}
&&A(t)u=0\;\;\mbox{in $Q$}\nonumber\\
&&N(t) u-u=\theta au\;\;\mbox{for $p=1$}\nonumber\\
&&u=0\;\;\mbox{for $p=0$}.
\end{eqnarray}
and
\begin{eqnarray}\label{F20aa}
&&A(t)u=\mu bu\;\;\mbox{in $Q$}\nonumber\\
&&N(t) u-u=0\;\;\mbox{for $p=1$}\nonumber\\
&&u=0\;\;\mbox{for $p=0$},
\end{eqnarray}
where $a$ and $b$ are continuous, positive, $\Lambda$-periodic functions.

\begin{proposition}\label{Pm23} The spectral problems {\rm (\ref{F20a})} and {\rm (\ref{F20aa})} have the same number of negative eigenvalues (accounting their multiplicities). Moreover, if we consider these spectral problems problems on $M\Lambda$-periodic, even functions they have the same number of negative eigenvalues also.
\end{proposition}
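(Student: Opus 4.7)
The plan is to reduce both spectral problems to the same symmetric bilinear form on the same Hilbert space, and then invoke the Courant--Fischer min--max principle to identify the count of negative eigenvalues with a geometric invariant of that form. The key observation is that the number of negative eigenvalues of a generalized problem of the type $\mathcal{B}(u,v)=\lambda\, m(u,v)$ depends only on $\mathcal{B}$ and the ambient Hilbert space and not on the particular positive compact weight $m$ appearing on the right--hand side, so the distinction between a bulk weight $b$ and a boundary weight $a$ disappears at the level of Morse index.

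First I would derive the common variational formulation. Let $V$ be the space of $\Lambda_0$-periodic, even functions in $H^1(Q)$ vanishing at $p=0$. Testing $A(t)u$ against $v\in V$ and integrating by parts, the expressions (\ref{J4aa})--(\ref{J4ab}) give the Green identity
\begin{equation*}
\int_Q (A(t)u)\, v\, dq\,dp \;=\; \mathcal{B}_0(u,v) \;-\; \int_{\{p=1\}} (N(t)u)\, v\, dq,
\end{equation*}
where $\mathcal{B}_0$ is the symmetric bilinear form built from the principal part of $A(t)$. Setting
\begin{equation*}
\mathcal{B}(u,v) \;:=\; \mathcal{B}_0(u,v) \;-\; \int_{\{p=1\}} u\,v\, dq,
\end{equation*}
the spectral problem (\ref{F20a}) is equivalent to $\mathcal{B}(u,v)=\theta\int_{\{p=1\}} a\,u\,v\,dq$ for all $v\in V$, while (\ref{F20aa}) is equivalent to $\mathcal{B}(u,v)=\mu\int_Q b\,u\,v\,dq\,dp$ for all $v\in V$. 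Thus both problems have the form $\mathcal{B}(u,v)=\lambda\, m(u,v)$ on the same space $V$, the only difference being the choice of the positive weight $m$.

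Next I would invoke the general spectral theorem. Since the coefficients of $\mathcal{B}_0$ are bounded and $h_p\ge a_1>0$, the form $\mathcal{B}$ is semibounded from below and coercive modulo a compact perturbation. The trace map $V\hookrightarrow L^2(\{p=1\})$ and the embedding $V\hookrightarrow L^2(Q)$ are both compact, so in either setting the right--hand side $m$ defines a positive compact bilinear form. Standard spectral theory then yields a discrete spectrum accumulating only at $+\infty$, and the Courant--Fischer principle gives
\begin{equation*}
\#\{\lambda_j<0\} \;=\; \max\{\dim L : L\subset V,\ \mathcal{B}(u,u)<0 \text{ for every nonzero }u\in L\}.
\end{equation*}
The right--hand side is manifestly independent of $m$, so (\ref{F20a}) and (\ref{F20aa}) have the same number of negative eigenvalues, counted with multiplicity.

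For the $M\Lambda$-periodic, even case, the argument goes through verbatim: replace $V$ by the corresponding space $V_M$ over the enlarged strip $Q_M=\{(q,p):|q|<M\Lambda_0/2,\ 0<p<1\}$ and integrate over $Q_M$ in the definitions of $\mathcal{B}_0$ and $m$. The resulting form $\mathcal{B}_M$ is again independent of the choice of weight. The main technical point to verify, though a routine one, is the compactness of the trace and the semiboundedness of $\mathcal{B}$ on $V$ (and on $V_M$); these follow from the standard Rellich/trace theory on the bounded Lipschitz domain $Q$ (resp.\ $Q_M$) together with positivity of $h_p$, which ensures uniform ellipticity of $\mathcal{B}_0$. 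Once these prerequisites are in place, the identification of both eigenvalue counts with the same purely geometric invariant of $\mathcal{B}$ is immediate.
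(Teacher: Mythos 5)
Your reduction of both problems to a single symmetric form $\mathcal{B}$ tested against two different weights is the right idea, and it is almost certainly how the cited reference proceeds (the paper itself gives only a pointer to \cite{Koz1}). But the key step has a genuine gap: the identity
\begin{equation*}
\#\{\lambda_j<0\}\;=\;\max\{\dim L : L\subset V,\ \mathcal{B}|_L<0\}
\end{equation*}
is \emph{not} a consequence of Courant--Fischer alone when the weight $m$ is only positive \emph{semi}-definite, which is the case for the Steklov-type problem (\ref{F20a}): there $m(u,v)=\int_{\{p=1\}}a\,u\,v\,dq$ annihilates the infinite-dimensional subspace $V_0=\{u\in V:u|_{p=1}=0\}$. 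If $\mathcal{B}$ had negative directions inside $V_0$, they would contribute to the Morse index on the right-hand side but would produce no negative Steklov eigenvalue on the left-hand side, and your formula would overcount. A two-dimensional example already shows this: take $\mathcal{B}(u,u)=-u_1^2-u_2^2$ and $m(u,u)=u_1^2$; the generalized problem has a single negative eigenvalue, while the maximal negative subspace of $\mathcal{B}$ is two-dimensional.

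What rescues the argument here is precisely the paper's inequality (\ref{J27c}): the form is strictly positive on functions vanishing on the free surface, i.e.\ $\mathcal{B}$ is positive definite on $V_0=\ker m$. Once you have this, any subspace $L$ with $\mathcal{B}|_L<0$ must satisfy $L\cap V_0=\{0\}$, the Schur-complement reduction to the boundary is legitimate, and the Morse index of $\mathcal{B}$ coincides with the negative Steklov count; for the bulk weight $\int_Q b\,u\,v$ the identity is immediate since $b>0$ makes $m$ positive definite. So the fix is small but essential: state and use (the $(q,p)$-variables form of) (\ref{J27c}) to control $\mathcal{B}$ on the kernel of the boundary weight, both on the base period and on $M\Lambda_0$-periodic functions, before invoking the min--max characterization.
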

\begin{proof} For $a=1$ and $b=1$ this assertion is proved in \cite{Koz1}. In general case the proof is literally the same.

\end{proof}

\subsection{Equation for subharmonic bifurcations}\label{SA22a}

For positive integer  $M=1,2,\ldots$, and $\alpha\in (0,1)$ let us introduce  the subspaces $C_{M,e}^{k,\alpha}(\overline{Q})$ and  $C_{M,e}^{k,\alpha}(\Bbb R)$ of $C^{k,\alpha}(\overline{Q})$ and  $C^{k,\alpha}(\Bbb R)$ respectively consisting of even functions of period $M\Lambda_0$. A similar space for the domain ${\mathcal D}_\xi$ we denote by $C_{M,e}^{2,\alpha}(\overline{{\mathcal D}_\xi})$.

Equation (\ref{F19b}) can be considered also on functions of period $M\Lambda_0$. Having this in mind we write as before $h+w$ instead of $h$ but now we assume that $w\in C^{2,\alpha}_{M,e}(\overline{Q})$. Now the operator ${\mathcal F}(w;h,t)$, ${\mathcal G}(w;h,t)$, $A(t)$ and $N(t)$ from Sect.\ref{SA23a} are considered on functions from $C^{2,\gamma}_{Me}(\overline{Q})$. To indicate this difference we will use the notations
${\mathcal F}_M(w;h,t)$, ${\mathcal G}_M(w;h,t)$, $A_M(t)$ and $N_M(t)$ for corresponding operators. Moreover we can define analogs of the operators ${\mathcal S}$ and $S$ and denote them by ${\mathcal S}_M$ and $S_M$ respectively. New operators are acting on functions from $C^{1,\alpha}_{M,e}(\Bbb R)$. The equation for subharmonic bifurcations is
\begin{equation}\label{A23a}
{\mathcal S}_M(g;t)=0,
\end{equation}
where
\begin{equation}\label{M5b}
{\mathcal S}_M(g;t)\,:\, {\mathcal U}\rightarrow  C_{M,e}^{1,\alpha}(\Bbb R).
\end{equation}
Here ${\mathcal U}$ is a neighborhood oh $0$ in the space $C_{M,e}^{2,\alpha}(\Bbb R)$.
The operator ${\mathcal S}_M$ is also potential and for the definition of the potential the integration in (\ref{M2a}) must be taken over the interval $(-M\Lambda_0/2,M\Lambda_0/2)$.
The corresponding eigenvalue problem for the Frechet derivative $S_M(t)$ is
\begin{equation}\label{A23aa}
S_M(t)g=-\theta g,
\end{equation}
where $S_M$ is defined by the same formulas as $S$ but now the functions $g$ and $w$ are $M\Lambda_0$-periodic. Clearly,
\begin{equation}\label{M5ba}
S_M(t)\,:\,C_{M,e}^{2,\alpha}(\Bbb R)\rightarrow  C_{M,e}^{1,\alpha}(\Bbb R).
\end{equation}





\section{Spectral properties of the Frechet derivative, $t$ dependence.}

Since the Frechet derivatives now depends on $t\in [0,\infty)$, we introduce the notations $\mu_j(t)$ and $\widehat{\mu}_j(t,\tau)$ in order to indicate the dependence od the eigenvalues on the variable $t$. Certainly we assume that
$$
\mu_0(t)<\mu_1(t)\leq \mu_2(t)\leq\cdots\;\;\mbox{and}\;\;\widehat{\mu}_0(t,\tau)<\widehat{\mu}_1(t,\tau)\leq \widehat{\mu}_2(t,\tau)\leq\cdots
$$
Clearly $\widehat{\mu}_j(t,0)=\mu_j(t)$.
Using Proposition \ref{Pr1}, we conclude $\mu_0(t)<0$ for all $t$. In what follows we will assume that
$$
\mu_1(t_0)=0,\;\;\mu_1(t)>0\;\;\mbox{for $t\in (0,t_0)$ and}\;\;\mu_1(t)<0\;\;\mbox{for $t\in (t_0,t_0+\varepsilon)$},
$$
where $\varepsilon$ is a small positive number.

\begin{lemma}\label{LF7a} The functions
\begin{equation}\label{F7a}
\widehat{\mu}_1(t,\tau)+\widehat{\mu}_2(t,\tau)\;\;\mbox{ and}\;\;\widehat{\mu}_1(t,\tau)\widehat{\mu}_2(t,\tau)
\end{equation}
 are analytic in a neighborhood of the point $t=t_0$, $\tau=0$.
\end{lemma}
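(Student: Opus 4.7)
The plan is to apply the Kato--Rellich analytic perturbation theorem for self-adjoint forms of type (B). Let $\mathcal{A}(t,\tau)$ denote the self-adjoint operator on $L^2(\Omega)$ associated to $\mathbf{a}(\cdot,\cdot;\tau)$ with form-domain $H^1_{0,p}(\Omega)$. First I would establish that $(t,\tau)\mapsto\mathcal{A}(t,\tau)$ is real-analytic. The $\tau$-dependence is literally polynomial, $\mathbf{a}(u,v;\tau)=\mathbf{a}(u,v)+i\tau\mathbf{b}(u,v)+\tau^2\mathbf{c}(u,v)$, so it suffices to handle $t$. This is done by transplanting to a $t$-independent domain, either via the scaling $X\mapsto X\Lambda(0)/\Lambda(t)$ of Section 1.2 or, more cleanly, via the partial hodograph transform of Section \ref{SF22}; after such a change of variables the form-domain is fixed and the coefficients depend analytically on $t$ by the analyticity of the Stokes branch recorded in Section 1.2.

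Next I would isolate the part of $\sigma(\mathcal{A}(t_0,0))$ near $0$. The algebraic multiplicity of the eigenvalue $0$ at $(t_0,0)$ is exactly $2$: one eigenfunction is the even one furnished by $\mu_1(t_0)=0$, which is simple by Proposition \ref{PF10a}; the other is the odd eigenfunction $u_*=\Psi_X$ from Proposition \ref{Pr1}/(\ref{Okt21a}), which always provides a simple eigenvalue $0$ in the odd sector. All other eigenvalues are bounded away from $0$, since $\widehat{\mu}_0(t_0,0)=\mu_0(t_0)<0$ by Proposition \ref{Pr1}, and on the positive side $\mu_2(t_0)>0$ (because $t_0$ is the \emph{first} Stokes bifurcation) together with $\nu^{00}_1(t_0)>0$ (Proposition \ref{P22}) force $\widehat{\mu}_3(t_0,0)>0$ via (\ref{M16a}). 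Fix a small circle $\Gamma\subset\mathbb{C}$ enclosing $0$ and no other point of this spectrum, and form the Riesz projector
\[
P(t,\tau)=-\frac{1}{2\pi i}\oint_\Gamma\bigl(\mathcal{A}(t,\tau)-\zeta\bigr)^{-1}\,d\zeta,
\]
which is then real-analytic in $(t,\tau)$ in a neighborhood of $(t_0,0)$ and of constant rank $2$. Transporting a fixed orthonormal basis of $\operatorname{Ran}P(t_0,0)$ by $P(t,\tau)$ and applying Gram--Schmidt gives an analytic frame, in which the compression of $\mathcal{A}(t,\tau)$ to $\operatorname{Ran}P(t,\tau)$ becomes a symmetric $2\times 2$ matrix $M(t,\tau)$ with real-analytic entries. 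By the spectral gap its two eigenvalues are exactly $\widehat{\mu}_1(t,\tau)$ and $\widehat{\mu}_2(t,\tau)$, so
\[
\widehat{\mu}_1+\widehat{\mu}_2=\operatorname{tr}M(t,\tau),\qquad \widehat{\mu}_1\widehat{\mu}_2=\det M(t,\tau)
\]
are analytic in $(t,\tau)$ near $(t_0,0)$.

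The main obstacle is the verification that $\mathcal{A}(t,\tau)$ is genuinely a type (B) analytic family jointly in $(t,\tau)$: the $\tau$-dependence is trivial, but the $t$-dependence passes through the free boundary $\xi(\cdot;t)$ and hence through the domain $\Omega$ and the boundary weight $\rho(\cdot;t)$. Once one pulls back to a fixed domain via the hodograph change of Section \ref{SF22}, this reduces to showing that the transformed coefficients depend analytically on $t$ with values in the appropriate Hölder spaces, which is precisely the analyticity statement for the branch $(h(\cdot,\cdot;t),\lambda(t))$ recalled in (\ref{J4ac}). After that step the remainder of the argument is a routine application of Kato's perturbation theory, with the crucial feature that individual eigenvalues may fail to be analytic (indeed Lemma \ref{LD2a} contemplates $\widehat{\mu}_1(\tau)\sim-\kappa_n\tau^{2n-1}$, which is analytic in $\tau$ but interchanges with $\widehat{\mu}_2$ at $\tau=0$), whereas their sum and product are.
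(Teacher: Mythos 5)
Your proposal follows essentially the same route as the paper: work with the operator $A(t,\tau)$ in the fixed hodograph domain $Q$ so that the $t$-dependence enters only through analytically varying coefficients, observe that the eigenvalue $0$ at $(t_0,0)$ has multiplicity exactly two (one even eigenfunction from Proposition~\ref{PF10a}, one odd from $\Psi_X$), form the rank-$2$ Riesz/spectral projector $P(t,\tau)$ analytic near $(t_0,0)$, and read off the sum and product of $\widehat{\mu}_1,\widehat{\mu}_2$ as basis-invariant symmetric functions (trace and determinant) of the compression $P(t,\tau)A(t,\tau)$. Your write-up is somewhat more explicit than the paper's on two points the paper leaves implicit --- the type-(B) analyticity of the family after the hodograph pullback, and the construction of an analytic orthonormal frame in $\operatorname{Ran}P(t,\tau)$ before taking trace and determinant --- but these are exactly the steps the paper's terse argument is relying on, so the two proofs are the same in substance.
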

\begin{proof} We introduce the domain
$$
{\mathcal D}=\{u\in H^2_{0,p}(Q)\,:\,N(t) u-u=0\;\mbox{for $p=1$}\}
$$
and consider the operator $A(t,\tau)$ defined on this domain. If $t=t_0$ and $\tau=0$ then the spectrum of this operator in a neighborhood of $(t_0,0)$ consists of the point $0$ and its multiplicity is two.  For $(t,\tau)$ close to $(t_0,0)$ we introduce the spectral projector $P(t,\tau)$ of the operator $A(t,\tau)$ corresponding to the eigenvalues located close to the point  $0$. Its kernel has dimensions $2$ and it depends analytically on $(t,\tau)$ in a neighborhood of $(t_0,0)$. The operator $P(t,\tau))A(t,\tau)$ has finite rank and its trace is invariant with respect to the choice of orthogonal normalized basis. Therefore the functions (\ref{F7a})
are analytical with respect to $t$ and $\tau$ in a neighborhood of $(t_0,0)$.
\end{proof}

\begin{corollary} The functions $\widehat{\mu}_1(t,\tau)$ and $\widehat{\mu}_2(t,\tau)$ are Lipschitz continuous in a neighborhood of $(t_0,0)$.

\end{corollary}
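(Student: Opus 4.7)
My plan is to combine Lemma \ref{LF7a} with the elementary quadratic formula. Setting
$$s(t,\tau) := \widehat{\mu}_1(t,\tau) + \widehat{\mu}_2(t,\tau), \qquad p(t,\tau) := \widehat{\mu}_1(t,\tau)\,\widehat{\mu}_2(t,\tau),$$
Lemma \ref{LF7a} gives that $s$ and $p$ are real-analytic on a neighborhood $\mathcal{U}$ of $(t_0,0)$. Since the ordered eigenvalues $\widehat{\mu}_1 \le \widehat{\mu}_2$ are real, they are precisely the two real roots of $z^2 - s z + p = 0$, so
$$\widehat{\mu}_{1,2}(t,\tau) = \tfrac{1}{2}\bigl(s(t,\tau) \mp \sqrt{D(t,\tau)}\bigr), \qquad D := s^2 - 4p = (\widehat{\mu}_2 - \widehat{\mu}_1)^2 \ge 0.$$
The function $s$ is analytic and hence Lipschitz on any compact subneighborhood of $(t_0,0)$, so the corollary will reduce to proving that $\sqrt{D}$ is Lipschitz there.

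The only nontrivial step will be this Lipschitz bound on $\sqrt{D}$. The function $D$ is real-analytic, nonnegative, vanishes at $(t_0,0)$, and in particular is $C^{1,1}$ on a small neighborhood $\mathcal{U}'$ of $(t_0,0)$. I will then invoke the standard pointwise inequality
$$|\nabla D(t,\tau)|^2 \le 2 M\, D(t,\tau), \qquad M := \|D''\|_{L^\infty(\mathcal{U}')},$$
which can be proved by the following Taylor argument: for an interior point $(t,\tau)$ and unit direction $v$, writing $g(r) := D((t,\tau)+rv)$ and applying Taylor's theorem gives $g(r) \le g(0) + r g'(0) + Mr^2/2$; choosing $r = -g'(0)/M$ yields $g(-g'(0)/M) \le g(0) - g'(0)^2/(2M)$, and the nonnegativity $g \ge 0$ then forces $g'(0)^2 \le 2 M g(0)$, which is the desired pointwise inequality. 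It immediately yields $|\nabla \sqrt{D}| = |\nabla D|/(2\sqrt{D}) \le \sqrt{M/2}$ on $\{D > 0\}$, so $\sqrt{D}$ is Lipschitz on $\mathcal{U}'$.

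Combining the Lipschitz continuity of $\sqrt{D}$ with the smoothness of $s$ in the explicit formula above gives the Lipschitz continuity of both $\widehat{\mu}_1$ and $\widehat{\mu}_2$. The main (indeed only substantive) obstacle is the square-root Lipschitz estimate, which is the standard mechanism by which eigenvalue crossings produce Lipschitz but generally non-smooth branches; everything else is a direct algebraic consequence of Lemma \ref{LF7a}.
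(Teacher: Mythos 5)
Your proof is correct, and it takes a genuinely different route from the paper. The paper simply observes that $\widehat{\mu}_1,\widehat{\mu}_2$ are the roots of a monic quadratic $\mu^2 - B\mu + A = 0$ whose coefficients are analytic in $(t,\tau)$ (Lemma \ref{LF7a}) and are always real, and then cites the Kurdyka--Paunescu result on Lipschitz continuity of roots of hyperbolic polynomials with analytic coefficients. You instead exploit the fact that the polynomial has degree two: the quadratic formula expresses the roots as $\tfrac12(s \mp \sqrt{D})$, which reduces the claim to the Lipschitz continuity of $\sqrt{D}$ where $D = s^2 - 4p = (\widehat{\mu}_2 - \widehat{\mu}_1)^2 \ge 0$ is analytic and nonnegative, and you prove this via the Glaeser inequality $|\nabla D|^2 \le 2\,\|D''\|_\infty\, D$ for nonnegative $C^{1,1}$ functions. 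This is self-contained and elementary, whereas the paper's route is shorter on the page but outsourced to a general multiparameter theorem that works for arbitrary degree. One minor technical remark to complete your argument: the Taylor step $r = -g'(0)/M$ must remain inside the region where $D \ge 0$ and the second-derivative bound holds; this is handled by establishing the gradient bound on a compactly contained subneighborhood $\mathcal{U}'' \Subset \mathcal{U}'$ and noting that $D \ge 0$ holds globally (the two eigenvalues are always real for real $\tau$), so the inequality is available on all of $\mathcal{U}'$. With that said, your argument stands.
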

\begin{proof} Consider the equation
$$
\mu^2-B(t,\tau)\mu+A(t,\tau)=0,\;\;A=\widehat{\mu}_1\widehat{\mu}_2,\;\;B=\widehat{\mu}_1\widehat{\mu}_2.
$$
By Lemma \ref{LF7a} the coefficients $A$ an $B$ are analysic with respect to $(t,\tau)$ in a neighborhood of $(t_0,0)$. Since $\mu=\widehat{\mu}_1$ and $\mu=\widehat{\mu}_2$ are roots of this equation the result follows from \cite{KuPa}, for example.

\end{proof}

\subsection{Generalized spectral problem, $\mu=0$.}

Here we study bounded solutions of the problem (\ref{Sept11a}) with $\mu=0$, i.e.
\begin{eqnarray}\label{F4a}
&&\Delta w+\omega'(\psi)w=0 \;\;\mbox{in ${\mathcal D}_\eta$},\nonumber\\
&&\partial_\nu w-\rho w=0\;\;\mbox{on ${\mathcal S}_\eta$},\nonumber\\
&&w=0\;\;\mbox{for $y=0$}.
\end{eqnarray}
According to Lemma \ref{CorM1}  this problem in $q,\,p$ variables has the form
\begin{eqnarray}\label{M5c}
&&A(\tau,t)w=0\;\;\mbox{in $Q$}\nonumber\\
&&N(\tau,t)w+w=0 \;\;\mbox{for $p=1$}\nonumber\\
&&w=0\;\;\mbox{for $p=0$}.
\end{eqnarray}
where
$$
A(\tau,t)w=e^{-i\tau q}A(t)(e^{i\tau q}w), \;\;N(t,\tau)w=e^{-i\tau q}N(t)(e^{i\tau q}w)
$$
We are interested in such real $\tau$ for which the problem (\ref{M5c}) has non-trivial solutions when $|\tau|$ and $|t-t_0|$ are small.

\begin{proposition} Let $t_0>0$ be the first value of $t$ when $\mu_1(t_0)=0$ and $\mu_1(t)<0$ for $t\in (t_0,t_0+\varepsilon)$, where $\varepsilon$ is a small positive number. Then there exist a small positive $\delta$ such that all $\tau$ eigenvalues of (\ref{M5c}) subject to $|t-t_0|<\delta$ and $|\tau|<\delta$ are described by
\begin{equation}\label{D15aa}
\widehat{\tau}_k(s)=\sum_{j=0}^\infty a_{kj}s^{j/n_k},\;\;k=1,\ldots L,
\end{equation}
where $s=t-t_0$. This formula gives all possible small $\tau$-eigenvalues. Here $L$ is the algebraic multiplicity of the $\tau$ eigenvalue $\tau=0$ when $t=t_0$.
\end{proposition}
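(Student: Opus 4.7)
The plan is to reduce the question to an analytic equation in two variables and then invoke the Weierstrass preparation theorem together with the classical Newton--Puiseux theorem.

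First, I would reformulate the condition for $\tau$ to be a $\tau$-eigenvalue. A real number $\tau$ gives a nontrivial solution of (\ref{M5c}) if and only if $0$ is an eigenvalue of the operator pencil $(A(\tau,t),N(\tau,t))$, i.e.\ if and only if $\widehat{\mu}_j(t,\tau)=0$ for some $j\ge 0$. Next I would localize: by Lemma \ref{L26a}, in a small neighborhood of $(t_0,0)$ one has $\widehat{\mu}_0(t,\tau)<0$ and, using the hypothesis $\mu_1(t_0)=0$ together with (\ref{M16a}) and continuity, also $\widehat{\mu}_3(t,\tau)>0$. Therefore the small $\tau$-eigenvalues near $\tau=0$ are exactly the zeros of
\begin{equation*}
F(t,\tau):=\widehat{\mu}_1(t,\tau)\,\widehat{\mu}_2(t,\tau).
\end{equation*}

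Second, by Lemma \ref{LF7a} the function $F$ is analytic in $(t,\tau)$ in a neighborhood of $(t_0,0)$. Setting $s=t-t_0$, I regard $F$ as an analytic function of $(s,\tau)$ near the origin. Since $\mu_1(t_0)=0$ we have $F(0,0)=0$. Provided the algebraic multiplicity $L$ is finite, i.e.\ $F(0,\tau)\not\equiv 0$ as a function of $\tau$, there is a unique integer $L\ge 1$ such that
\begin{equation*}
F(0,\tau)=\tau^{L}\,U_0(\tau),\qquad U_0(0)\ne 0.
\end{equation*}
The Weierstrass preparation theorem then furnishes a factorization
\begin{equation*}
F(s,\tau)=U(s,\tau)\,P(s,\tau),\qquad P(s,\tau)=\tau^{L}+c_{L-1}(s)\tau^{L-1}+\cdots+c_{0}(s),
\end{equation*}
where $U$ is analytic and nonvanishing at $(0,0)$ and the coefficients $c_j(s)$ are analytic with $c_j(0)=0$. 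In a small bidisc around $(0,0)$ the zero set of $F$ coincides with the zero set of the Weierstrass polynomial $P$.

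Third, I would apply the Newton--Puiseux theorem to $P(s,\tau)=0$, viewed as a monic polynomial of degree $L$ in $\tau$ with analytic coefficients in $s$. This yields exactly $L$ roots (counted with multiplicity), each of which admits a convergent Puiseux expansion of the form
\begin{equation*}
\widehat{\tau}_k(s)=\sum_{j=0}^{\infty}a_{kj}\,s^{j/n_k},\qquad k=1,\ldots,L,
\end{equation*}
for some positive integer $n_k$ (determined by the slopes of the Newton polygon of $P$), convergent in some sector $0<|s|<\rho$ after choosing a branch of $s^{1/n_k}$. Shrinking $\delta$ if necessary, these series exhaust all zeros of $F$ in $\{|s|<\delta,\ |\tau|<\delta\}$, hence all small $\tau$-eigenvalues of (\ref{M5c}).

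The main obstacle is finiteness of $L$, i.e.\ ruling out that $F(0,\tau)\equiv 0$ near $\tau=0$. This follows from the structural information already established: at $t=t_0$ the kernel of the Frechet derivative on $\Lambda$-periodic even functions is one-dimensional by Proposition \ref{PF10a}, and the two-dimensional kernel appearing at $(t_0,0)$ in the setting with quasi-momentum splits into two analytic branches $\theta_1(\tau),\theta_2(\tau)$ as in Lemma \ref{LD2a}, neither of which vanishes identically in $\tau$. Consequently $F(0,\tau)$ has only an isolated zero of finite order at $\tau=0$, which justifies the Weierstrass step above and yields the stated list of Puiseux branches.
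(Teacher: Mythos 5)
Your proof is correct and reaches the same conclusion, but by a genuinely different route from the paper.

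The paper reduces the second-order problem (\ref{M5c}) to a first-order system by introducing an auxiliary function $v$, rewrites it as a standard eigenvalue problem $i\tau\,(w,v)^T=\mathcal{A}(t)(w,v)^T$, and then uses a $t$-dependent spectral projector $P(t)$ onto the generalized eigenspace of $\mathcal{A}(t)$ near $0$ to obtain a finite-degree characteristic polynomial in $\tau$ with coefficients analytic in $t$; the Puiseux expansions then come from Vainberg--Trenogin branching theory. You instead stay with the second-order formulation, define $F(t,\tau)=\widehat{\mu}_1(t,\tau)\widehat{\mu}_2(t,\tau)$ — analytic near $(t_0,0)$ by Lemma \ref{LF7a}, which is itself proved by a two-parameter version of the same spectral-projector device — and apply Weierstrass preparation plus Newton--Puiseux. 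Both approaches ultimately produce a monic polynomial of degree $L$ in $\tau$ with coefficients analytic in $s=t-t_0$ and invoke the Newton polygon; your version is arguably cleaner in that it recycles a lemma the paper has already established, identifies $L$ transparently as the $\tau$-order of vanishing of $\widehat{\mu}_1\widehat{\mu}_2$ at $(t_0,0)$, and handles finiteness of $L$ correctly via Lemma \ref{LD2a}. One point you should make explicit: your equivalence ``$\tau$ is a $\tau$-eigenvalue iff $F(t,\tau)=0$'' is stated and literally true only for real $\tau$ (where the $\widehat{\mu}_j$ are defined as eigenvalues of a self-adjoint problem), yet the Weierstrass/Puiseux step and the conclusion of the proposition involve complex $\tau$. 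The needed fact is that the holomorphic extension of $F$ provided by Lemma \ref{LF7a} — namely the determinant of the $2\times 2$ operator $P(t,\tau)A(t,\tau)$ on the range of the spectral projector — vanishes precisely when $0$ is an eigenvalue of $A(t,\tau)$, for complex $\tau$ as well. This is indeed what the projector construction gives, but it is worth saying; the paper's first-order reduction sidesteps the issue by placing $i\tau$ directly in the eigenvalue slot of an operator pencil.
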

\begin{proof}
We can reduce (\ref{M5c}) to a standard eigenvalue problem by introducing
$$
v=\frac{(w_q+i\tau w)}{h_p}-\frac{h_qw_p}{h_p^2}.
$$
Then
$$
Aw=\Big(\frac{\lambda^2h_q}{h_p}v-\frac{w_p}{h_p^2}\Big)_p-\lambda^2(i\tau+\partial_q)v
$$
and
$$
Nw=\frac{\lambda^2h_q}{h_p}v-\frac{w_p}{h_p^2}.
$$
The problem (\ref{M5c}) can be written as
\begin{eqnarray}\label{M9a}
&&i\tau w=\frac{h_qw_p}{h_p}-h_pv-w_q\nonumber\\
&&i\tau v=\lambda^{-2}\Big(\frac{\lambda^2h_q}{h_p}v-\frac{w_p}{h_p^2}\Big)_p-\partial_qv
\end{eqnarray}
with boundary conditions
\begin{eqnarray}\label{M9aa}
&&\frac{\lambda^2h_q}{h_p}v-\frac{w_p}{h_p^2}=0\;\;\mbox{for $p=1$}\nonumber\\
&&w=0\;\;\mbox{for $p=0$}.
\end{eqnarray}

We introduce the operator
$$
{\mathcal A}\left (
\begin{array}{ll}
w \\
u \end{array}
\right )=\left (
\begin{array}{ll}
\frac{h_qw_p}{h_p}-w_q & -h_pv \\
-\lambda^{-2}\Big(\frac{w_p}{h_p^2}\Big)_p-\partial_qv &\Big(\frac{h_q}{h_p}v\Big)_p -\partial_qv
\end{array}
\right )
$$
Then
$$
{\mathcal A}={\mathcal A}(t):C^{2,\gamma}\times C^{1,\gamma}\rightarrow C^{1,\gamma}\times C^{0,\gamma}
$$
We put
$$
{\mathcal D}=\{(w,v)\in C^{2,\gamma}\times C^{1,\gamma}\,:\,\mbox{(\ref{M9aa}) is satisfied}\}.
$$
Then the operator
$$
{\mathcal A}:{\mathcal D}\rightarrow C^{1,\gamma}\times C^{0,\gamma}
$$
is Fredholm with zero index and its spectrum consists of isolated $\tau$-eigenvalues of finite algebraic multiplicity. Denote by $X_0$ the kernel of this operator for $t=t_0$ and by $P(t)$ the spectral projector corresponding to the eigenvalues of ${\mathcal A}(t)$ located near $0$. Let also $N_0=\dim X_0$. Then $P(t)$ is the spectral projector of rank $N_0$ depending analytically on $t$ in a small neighborhood of $t_0$. Since the characteristic equation for the operator
 $P(t){\mathcal A}(t)$ is well defined and is invariant with respect of the choice of the basis, the coefficient of the characteristic equation  analytically depends on $t$ in a neighborhood of $t_0$. Therefore the its coefficients
analytically depend on $t$ also.
Therefore we obtain a polynomial equation for $\tau$-eigenvalues located in a small neighborhood of $0$ with coefficients analytically depending on $t$.
According to \cite{VaTr} such equation has roots (\ref{D15aa})
which give all possible small $\tau$-eigenvalues.
\end{proof}

Assume that $t_0>0$ is the first Stokes bifurcation point, which means
\begin{equation}\label{M18aa}
\mu_1(t_0)=0, \;\;\mu_1(t)>0\;\;\mbox{for $t\in (0,t_0)$ and}\;\;\mu_1(t)<0\;\;\mbox{for $t\in (t_0,t_0+\epsilon)$},
\end{equation}
where $\epsilon$ is a small positive number.

Since $\widehat{\mu}_0\leq \nu_0^{*0}$ according to (\ref{Ja19a}), we have
\begin{equation}\label{M16ba}
\widehat{\mu}_0(t,\tau)<0\;\;\mbox{for all $t\geq 0$, $\tau\in\Bbb R$}
\end{equation}
by (\ref{F1a}). Using (\ref{Ja25a}) and (\ref{M16a}), we get
\begin{equation}\label{M16b}
\widehat{\mu}_3(t,\tau)>0\;\;\mbox{for all $t\geq 0$, $\tau\in\Bbb R$},
\end{equation}
provided $\mu_2(t)>0$ for $t\in (t_0,t_0+\epsilon)$.
Furthermore, by (\ref{J6aa}) and Proposition \ref{P22}
\begin{equation}\label{M16bb}
\widehat{\mu}_1(t,\tau)<0\;\;\mbox{for $t\in (0,t_0+\epsilon)$ and  $\tau\in (0,\tau_*)$},
\end{equation}
where $\epsilon$ is chosen to satisfy $\nu_1^{*0}(t)>0$  for $t\in (t_0,t_0+\epsilon)$. Since $\nu_1^{*0}(t)>\mu_1(t)$,  the inequality (\ref{M16bb}) is always valid for $t\leq t_0$ and its validity for $t\in (t_0,t_0+\epsilon)$ requires a restriction on $\epsilon$. Therefore all small $\tau$ eigenvalues of the problem (\ref{M5c}) in the interval $(0,\tau_*)$ for $t\in (t_0,t_0+\epsilon)$ are given by the equation $\widehat{\mu}_2(t,\tau)=0$, where $t\in (t_0,t_0+\epsilon)$. Here we have used inequality (\ref{J6aa}), which implies
\begin{equation}\label{M19a}
\widehat{\mu}_2(t,\tau)>0\;\;\mbox{for $t\leq t_0$ and $\tau\in (0,\tau_*)$}.
\end{equation}

Since $\tau$ eigenvalues are roots of the equation $\widehat{\mu}_2(t,\tau)=0$, there are roots among (\ref{D15aa}) with real coefficients. Moreover if $\tau(t)$ is a real root then $-\tau(t)$ is also a real root. We numerate all positive real roots by $\widehat{\tau}_k(t)$, $k=1,\ldots,m$,
\begin{equation}\label{F6a}
0<\widehat{\tau}_1(t)<\cdots<\widehat{\tau}_m(t),\;\;\mbox{for $t\in (t_0,t_0+\epsilon)$}.
\end{equation}
 We note that since all roots in (\ref{F6a}) satisfies $\widehat{\mu}_2(t,\tau_j(t))=0$, $j=1,\ldots,m$, they are different and we have strong inequalities in (\ref{F6a}). Since the leading term of functions $\widehat{\tau}_j$ is positive the sign of the derivativ is positive also in a small neigborhood of $t_0$. So we will assume that $\varepsilon$ is chosen to satisfy
\begin{equation}\label{M18a}
\partial_t\widehat{\tau}_j(t)>0\;\;\mbox{for $t\in (t_0,t_0+\varepsilon)$}.
\end{equation}

\begin{lemma}\label{LF7aa} Let (\ref{M18aa}) hold. For each $j=1,\ldots,m$ there exists an integer $n>0$ and $\epsilon_1>0$
 such that
  \begin{equation}\label{F6bax}
\partial^{k}_t\widehat{\mu}_2(t,\tau)=0 \;\;\mbox{for $\tau=\widehat{\tau}_j(t)$ and for $t\in (t_0,t_0+\epsilon_1)$ }
\end{equation}
for $k<n$ and
 \begin{equation}\label{F6ba}
\partial^{n}_t\widehat{\mu}_2(t,\tau)\neq 0 \;\;\mbox{for $\tau=\widehat{\tau}_j(t)$ and for $t\in (t_0,t_0+\epsilon_1)$ }.
\end{equation}

\end{lemma}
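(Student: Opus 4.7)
The plan is to convert the conclusion into a statement about the order of vanishing of
$$
F_k(t) := \partial_t^k \widehat{\mu}_2(t,\tau)\big|_{\tau = \widehat{\tau}_j(t)},
$$
and to exclude infinite-order flatness by pitting analytic continuation of $\widehat{\mu}_2(\cdot,\tau_1)$ for a fixed small $\tau_1>0$ against the sign information (\ref{M19a}) valid for $t\leq t_0$.

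First, I would show that $\widehat{\mu}_2$ is jointly real-analytic in $(t,\tau)$ on a neighbourhood of the curve $\Gamma_j := \{(t,\widehat{\tau}_j(t)) : t \in (t_0,t_0+\epsilon)\}$. By Lemma \ref{LF7a}, the symmetric functions $B := \widehat{\mu}_1 + \widehat{\mu}_2$ and $A := \widehat{\mu}_1 \widehat{\mu}_2$ are analytic in $(t,\tau)$ near $(t_0,0)$, and on $\Gamma_j$ the inequality (\ref{M16bb}) gives $\widehat{\mu}_1<0=\widehat{\mu}_2$, making the discriminant of $\mu^2 - B\mu + A$ strictly positive; hence $\widehat{\mu}_2$ is its analytic larger root on an open neighbourhood of $\Gamma_j$. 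Using the Puiseux expansion (\ref{D15aa}), I would parametrise the curve by $\sigma\geq 0$ with $\sigma^{n_j}=t-t_0$, so that $\widehat{\tau}_j$ is analytic in $\sigma$ and each $F_k$ is analytic in $\sigma>0$ with a Puiseux expansion at $\sigma=0$. Note that $F_0\equiv 0$, since $\widehat{\mu}_2$ vanishes identically along $\Gamma_j$.

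The heart of the argument is to show that some $F_k$ is not identically zero. Suppose, for contradiction, that $F_k\equiv 0$ for every $k\geq 0$. Choose $t_*\in(t_0,t_0+\epsilon)$ with $\tau_1:=\widehat{\tau}_j(t_*)>0$ sufficiently small (possible because $\widehat{\tau}_j(t_0)=0$). For this fixed $\tau_1$, Lemma \ref{LD2a} places $\widehat{\mu}_1(t_0,\tau_1)$ and $\widehat{\mu}_2(t_0,\tau_1)$ on strictly opposite sides of $0$, so by continuity the two eigenvalues stay separated on an open $t$-interval $I$ containing both $t_*$ and some point below $t_0$; Kato--Rellich analytic perturbation theory then yields real-analyticity of $\widehat{\mu}_2(\cdot,\tau_1)$ on $I$. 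Vanishing of every $F_k(t_*)$ is exactly the vanishing of the Taylor series of $\widehat{\mu}_2(\cdot,\tau_1)$ at $t=t_*$, so analytic continuation on $I$ forces $\widehat{\mu}_2(\cdot,\tau_1)\equiv 0$ on $I$, contradicting (\ref{M19a}), which gives $\widehat{\mu}_2(t,\tau_1)>0$ for $t<t_0$ and $\tau_1\in(0,\tau_*)$. Defining $n$ as the smallest $k\geq 1$ with $F_k\not\equiv 0$ therefore yields $F_k\equiv 0$ for $k<n$.

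Finally, since $F_n$ is a non-trivial analytic function of $\sigma>0$ with a Puiseux expansion at $\sigma=0$, its zeros cannot cluster at $\sigma=0$, so some $\epsilon_1>0$ makes $F_n(t)\neq 0$ throughout $(t_0,t_0+\epsilon_1)$, as required. I expect the main technical obstacle to be the mixed analyticity near the branch point $(t_0,0)$, where $\widehat{\mu}_1$ and $\widehat{\mu}_2$ coalesce and joint analyticity of $\widehat{\mu}_2$ fails: to run the analytic-continuation step one must carefully juxtapose analyticity along $\Gamma_j$ with fixed-$\tau_1$ analytic perturbation theory, checking in particular that for $\tau_1>0$ small enough the separation $\widehat{\mu}_1(t,\tau_1)<\widehat{\mu}_2(t,\tau_1)$ persists for $t$ on an interval crossing $t_0$.
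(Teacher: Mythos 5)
You take a genuinely different route from the paper's. The paper fixes $t$, uses that $\widehat{\mu}_2(t,\cdot)$ is analytic in $\tau$ near its simple root $\widehat{\tau}_j(t)$, and derives a contradiction from the Shargorodsky--Sobolev finiteness result (a fixed value of $\mu$ cannot be a $\tau$-eigenvalue for an interval of $\tau$). You instead fix $\tau_1=\widehat{\tau}_j(t_*)$, work with analyticity in $t$, and pit the would-be conclusion $\widehat{\mu}_2(\cdot,\tau_1)\equiv 0$ against the positivity statement (\ref{M19a}) for $t<t_0$. This is a natural alternative: (\ref{M19a}) is precisely what encodes that $t_0$ is the \emph{first} Stokes bifurcation, so your argument exploits the most characteristic feature of the hypothesis rather than an external spectral-theoretic fact.

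There is, however, a genuine gap in your analytic-continuation step, which you flag but do not close. You claim that ``by continuity the two eigenvalues stay separated on an open $t$-interval $I$ containing both $t_*$ and some point below $t_0$.'' Continuity at $(t_0,\tau_1)$ only gives separation for $t$ \emph{near} $t_0$, and nothing in the hypotheses forbids $\widehat{\mu}_1(t,\tau_1)=\widehat{\mu}_2(t,\tau_1)$ (at a common negative value) at some $t$ strictly between $t_0$ and $t_*$; at such a coalescence $\widehat{\mu}_2(\cdot,\tau_1)$ need not be analytic and the continuation along $I$ breaks down. The repair is to carry out the continuation not on $\widehat{\mu}_2$ but on the product $P(t)=\widehat{\mu}_1(t,\tau_1)\,\widehat{\mu}_2(t,\tau_1)$, which by Lemma \ref{LF7a} is analytic on a fixed $t$-neighbourhood of $t_0$ \emph{independently} of any eigenvalue separation. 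Since $\widehat{\mu}_2(t_*,\tau_1)=0$ is simple (being separated from $\widehat{\mu}_1<0$ and $\widehat{\mu}_3>0$), your Taylor-series argument does yield $\widehat{\mu}_2(\cdot,\tau_1)\equiv 0$ on a small neighbourhood of $t_*$; then $P\equiv 0$ there, hence $P\equiv 0$ on the whole fixed domain of analyticity, contradicting $P(t)<0$ for $t<t_0$ which follows from (\ref{M19a}) and (\ref{M16bb}). You cite Lemma \ref{LF7a} to set up joint analyticity near the curve, but not at the one place where it is actually indispensable.
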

\begin{proof}  We note that by (\ref{M16ba}) the function $\widehat{\mu}_2(t,\tau)$ is analytic with respect to $\tau$  near a root $\tau=\widehat{\tau}_j$. Let $n=1,2,\ldots$. If the equation
\begin{equation}\label{F6aa}
\partial^n_\tau\widehat{\mu}_2(t,\widehat{\tau}_j(t))=0
\end{equation}
has infinitely many roots in the interval $(t_0,t_0+\epsilon)$ then the equation (\ref{F6aa}) is valid for all $t\in (t_0,t_0+\epsilon)$. If (\ref{F6aa}) holds for a certain $t\in (t_0,t_0+\epsilon)$ and all $n$ then due to analyticity of $\widehat{\mu}_(t,\tau)$ with respect to $\tau$ we get that $\widehat{\mu}_2(t,\tau)=0$ for all small $\tau$ in a neighborhood of $\widehat{\tau}_j(t)$, but this is not true by
\cite{ShaSo}.

Let us choose the smallest $n$ such that $\partial_t^n\widehat{\mu}_2(t,\tau)|_{\tau=\widehat{\tau}_j(t)}\neq 0$ for certain $t$. Then we can choose $\epsilon_1$ for which
\begin{equation}\label{M18b}
\partial^n_\tau\widehat{\mu}_2(t,\tau)|_{\tau=\widehat{\tau}_j(t)}\neq 0\;\;\mbox{for $t\in (t_0,t_0+\epsilon_1)$}.
\end{equation}
Differentiating $\widehat{\mu}_2(t,\widehat{\tau}_j(t))=0$ $n$ times and using (\ref{M18a}), we arrive at (\ref{F6ba}).
\end{proof}


\section{Local subharmonic bifurcation theorem}

In this section we assume that $\mu_1(t_0)=0$ for certain $t_0>0$ and (\ref{M18aa}) holds.

Let
$$
\widehat{a}_1=\widehat{a}_1(t_0,\epsilon)=\min_{\overline{Q}\times [t_0-\epsilon,t_0+\epsilon]}h_p(p,q;t),\;\;\widehat{a}_2=\widehat{a}_2(t_0,\epsilon)=\sup_{ [t_0-\epsilon,t_0+\epsilon]}||h(\cdot,\cdot;t)||_{C^{2,\alpha}(Q)}.
$$
Denote
$$
{\mathcal A}_M=\{ g\in C_{p,M\Lambda_0}^{2,\alpha}(\Bbb R)\;:\; ||g||_{C^{2,\alpha}(\Bbb R)}\leq \delta=\delta(\widehat{a}_1,\widehat{a}_2)\},
$$
where $M$ is a positive integer.
Consider operator ${\mathcal S}_M$ defined on the set ${\mathcal A}_M$ for each $t\in (t_0-\epsilon,t_0+\epsilon)$,  i.e
$$
{\mathcal S}_M\,:\,{\mathcal A}_M\times  (t_0-\epsilon,t_0+\epsilon) \to C_{p,M\Lambda_0}^{1,\alpha}(\Bbb R).
$$
Here we consider the equation (\ref{A23a}) for subharmonic bifurcations. This operator satisfies
$$
{\mathcal S}_M\in C({\mathcal A}_M\times  (t_0-\epsilon,t_0+\epsilon), C_{p,M\Lambda_0}^{1,\alpha}(\Bbb R))
$$
and
$$
S_M\in  C({\mathcal A}_M\times  (t_0-\epsilon,t_0+\epsilon), L(C_{p,M\Lambda_0}^{2,\alpha}(\Bbb R),C_{p,M\Lambda_0}^{1,\alpha}(\Bbb R))).
$$

\begin{theorem}\label{TF9a} Let $t_0>0$ satisfy {\rm (\ref{M18aa})}.  Then there exists an integer  $M_0$ such that for every $M>M_0$ there exists $\varepsilon_M>0$ such that the branch {\rm (\ref{J4ac})} has $M$-subharmonic bifurcation at $t=t_0+\varepsilon_M$. Moreover the crossing number at these bifurcation point is $1$ and $\varepsilon_M\to 0$ as $M\to\infty$. 
The closure of the set of nontrivial solutions of
$$
{\mathcal S}_M(g;t)=0
$$
 near $(0,t_0+\varepsilon_M)$ contains a connected component to which $(0,t_0+\varepsilon_M)$ belongs.

\end{theorem}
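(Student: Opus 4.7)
The plan is to exhibit an explicit $M$-subharmonic bifurcation point $t_M$, show its crossing number is one, and then invoke a global bifurcation theorem for potential operators.

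First, I would locate $t_M$. By Lemma \ref{LF7aa} and the Puiseux representation (\ref{D15aa}), the smallest positive $\tau$-root $\widehat{\tau}_1(t)$ of $\widehat{\mu}_2(t,\tau)=0$ is defined on $(t_0,t_0+\varepsilon)$, satisfies $\widehat{\tau}_1(t)\to 0$ as $t\to t_0^+$, and is strictly increasing by (\ref{M18a}). Choose $M_0$ so large that $\tau_*/M$ lies in the range of $\widehat{\tau}_1|_{(t_0,t_0+\varepsilon)}$ for every $M>M_0$, and define $t_M\in(t_0,t_0+\varepsilon)$ by $\widehat{\tau}_1(t_M)=\tau_*/M$. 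Setting $\varepsilon_M:=t_M-t_0$, strict monotonicity gives uniqueness of $t_M$, while continuity and $\widehat{\tau}_1(t_0)=0$ force $\varepsilon_M\to 0$ as $M\to\infty$.

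Next, I would check that $t_M$ is an $M$-subharmonic bifurcation point by verifying $\ker S_M(t_M)\neq\{0\}$. Via (\ref{Sept14a}), even $M\Lambda_0$-periodic solutions of (\ref{J17ax}) with $\mu=0$ decompose over quasi-momenta $\tau_j=j\tau_*/M$, $j=0,\ldots,\lfloor M/2\rfloor$. From Corollary \ref{C15a} applied slightly to the right of $t_0$ and Lemma \ref{L26a} (specifically (\ref{M16ba}), (\ref{M16b}), (\ref{M16bb})), in the range $t\in(t_0,t_0+\varepsilon)$, $\tau\in(0,\tau_*)$ the only $\widehat{\mu}_k$ that can vanish is $\widehat{\mu}_2$. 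Since $\widehat{\tau}_1(t_M)=\tau_*/M$ is a zero of $\widehat{\mu}_2(t_M,\cdot)$, the construction (\ref{Sept14a}) at quasi-momentum $\tau_*/M$ produces a nontrivial element in the kernel of the generalized eigenvalue problem; Lemma \ref{CorM1} and Proposition \ref{Pm23} transfer it to $\ker S_M(t_M)$. Enlarging $M_0$ if necessary, a Puiseux-expansion argument rules out coincidences $\widehat{\tau}_k(t_M)=j\tau_*/M$ with $k\geq 2$, $j\geq 2$, so that $\dim\ker S_M(t_M)=1$.

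Third, I would compute the crossing number at $t_M$. Fix $\tau=\tau_*/M$. By Lemma \ref{LD2a} we have $\widehat{\mu}_2(t_0,\tau_*/M)>0$, and by Lemma \ref{LF7a} the function $\widehat{\mu}_2$ is Lipschitz (actually analytic off the branch point) in $(t,\tau)$ near $(t_0,0)$. The implicit equation $\widehat{\mu}_2(t,\tau)=0$ is solved along the curve $\tau=\widehat{\tau}_1(t)$, which by (\ref{M18a}) crosses the horizontal line $\tau=\tau_*/M$ transversally at $t=t_M$. Thus $\widehat{\mu}_2(t,\tau_*/M)>0$ for $t\in(t_0,t_M)$, equals $0$ at $t_M$, and becomes negative for $t\in(t_M,t_M+\delta)$; combined with the one-dimensionality of the kernel just established, exactly one eigenvalue of $S_M(t)$ crosses zero transversally as $t$ passes through $t_M$, giving crossing number $1$.

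Finally, I would invoke the bifurcation theorem. The operator ${\mathcal S}_M(\cdot;t)$ is the gradient of the $M\Lambda_0$-periodic analogue of the functional (\ref{M2a}), hence is potential. An odd crossing number of its Frechet derivative at $t_M$ places the problem in the framework of the bifurcation theorem for potential operators, so $(0,t_M)$ is a bifurcation point of ${\mathcal S}_M(g;t)=0$. The global, Rabinowitz-style version of that theorem—used in Theorems \ref{T4.2C} and \ref{ThA22a}—then yields a closed connected component of nontrivial solutions whose closure contains $(0,t_M)$. The principal obstacle is step three: pinning down the sign of $\widehat{\mu}_2(t,\tau_*/M)$ uniformly for $t$ near $t_M$ and ruling out additional eigenvalue crossings, which requires combining the asymptotic expansion of Lemma \ref{LD2a} with the monotonicity (\ref{M18a}) and a large-$M$ argument to avoid accidental coincidences among the $\widehat{\tau}_k(t_M)$.
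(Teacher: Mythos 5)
Your proposal reproduces the broad outline of the paper's argument (locate $t_M$ via a $\tau$-root curve of $\widehat{\mu}_2$, compute an odd crossing number, invoke Kielh\"ofer's potential-operator bifurcation theorem), but it substitutes $\widehat{\tau}_1$ for the index $\widehat{\tau}_{n_*}$ the paper is forced to introduce, and this substitution creates two real gaps.

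First, your crossing-number computation rests on the assertion that $\widehat{\mu}_2(t,\tau_*/M)$ changes sign as $t$ passes through $t_M$, which you deduce from the fact that the curve $\tau=\widehat{\tau}_1(t)$ crosses the horizontal line $\tau=\tau_*/M$ transversally. That geometric transversality of the \emph{curve of roots} does not give transversality of the \emph{zero of} $\widehat{\mu}_2$: if $\widehat{\mu}_2(t,\cdot)$ vanishes to even order along $\widehat{\tau}_1(t)$, the function $\widehat{\mu}_2(t,\tau_*/M)$ merely touches zero at $t_M$ and the crossing number is $0$, not $1$. Since $\widehat{\mu}_2(t,\tau)<0$ both for small $\tau$ (Lemma \ref{L16s}) and on the other side of such a tangential root, your claimed sign pattern $(+,0,-)$ need not hold. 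This is precisely why the paper introduces Lemma \ref{LF7aa} and singles out the largest index $n_*$ for which $\partial_t^{n}\widehat{\mu}_2<0$ along $\widehat{\tau}_{n_*}$; it is $\widehat{\tau}_{n_*}$, not necessarily $\widehat{\tau}_1$, that carries the genuine sign change, and the existence of such an index is argued from the fact that $\widehat{\mu}_2$ must pass from negative to positive somewhere on $(0,\tau_*/2)$.

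Second, you dispose of the other potential kernel contributions (coincidences $\widehat{\tau}_k(t_M)=j\tau_*/M$ with $j\ge 2$) by asserting that a ``Puiseux-expansion argument'' rules them out after enlarging $M_0$. No such argument is given, and it is not clear one exists: these are Diophantine-type constraints that need not fail for all large $M$. The paper takes the opposite route -- it \emph{allows} such coincidences (so the kernel may be multi-dimensional) and instead shows, using the sign information from Lemma \ref{LF7aa} for indices $k>n_*$, that those extra roots do not alter the Morse index across $t_M$, so the crossing number is still $1$. Your approach would give $\dim\ker S_M(t_M)=1$ if it worked, which is cleaner, but as written it relies on an unproved exclusion step and on a transversality that fails in the degenerate cases the paper is designed to cover. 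Repairing the argument essentially forces you back to the paper's $n_*$-selection and Morse-index bookkeeping.
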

\begin{proof}  We use the bifurcation equation (\ref{A23a}) in a neighborhood of $t=t_0$. First we observe that $t_0$ is a Stokes bifurcation point. We choose an  integer $M$.
In order to apply Theorem II,4,4 from \cite{Ki1}, it is sufficient to verify the following spectral properties of the Frechet derivative $S_M(t)$:

(i) find $t_M>t_0$ close to $t_0$ such that the kernel of the operator $S_M(t_M)$ is non-trivial;

(ii) There exists a positive $\varepsilon$ such that the kernel of $S_M(t)$ is trivial for $t\in (t_M,t_M-\varepsilon)\cup (t_M,t_M+\varepsilon)$.
Furthermore
$$
n_M(t_2)-n_M(t_1)=1\;\;\mbox{for cerain $t_2\in (t_M,t_M+\epsilon)$ and $t_1\in (t_M,t_M-\varepsilon)$}.
$$

Due to Proposition \ref{Pm23} it is sufficient to verify properties (i) and (ii) for the branch of spectral problems (\ref{F20aa}) (or (\ref{J17a})) defined on $M\Lambda_0$ ($M\Lambda(t)$) periodic functions with  eigenvalues $\mu^{M}$. We choose a sufficiently large integer $M_0$ and a small $\delta_0>0$, and suppose that $M>M_0$   and  $t_0<t<t_0+\delta_0$. The size of $M_0$ and $\delta_0$ will be clarified below.

We may assume that  $\delta_0\leq \epsilon$ which guarantees the validity of (\ref{M16ba})-(\ref{M16bb}). The inequalities (\ref{M16ba}) and (\ref{M16b}) show that the the only $\tau$ eigenvalues which may contribute to the kernel of the operator $S_M(t)$ come from equations $\widehat{\mu}_1(t,\tau)=0$ or $\widehat{\mu}_2(t,\tau)=0$. By (\ref{M16bb}) the equation $\widehat{\mu}_1(t,\tau)=0$ has the only root $\tau=0$ on the interval $[0,\tau_*)$ with the corresponding eigenfunction $\psi_x$. Since the function $\psi_x$ is odd this equations does not contribute to the kernel also.

Let us consider the equation $\widehat{\mu}_2(t,\tau)=0$. We assume that $\delta_0$ is chosen such that the eigenvalue $\mu=0$ of the problem (\ref{Sept7a})  for $\tau=0$ and $t\in (t_0,t_0+\delta_0)$  is simple \footnote{We recall  that this problem is considered on $\Lambda$-periodic solution and it is not a spectral point for $t\in (t_0,t_0+\delta_0)$}.
By Lemma \ref{L16s}
  \begin{equation}\label{M19b}
  \widehat{\mu}_2(t,\tau)<0\;\; \mbox{for all $t\in (t_0,t_0+\delta_0)$ and small $|\tau|$}.
  \end{equation}
    Applying (\ref{J6aa}) to the eigenvalue $\widehat{\mu}_2(t_0,\tau)$ we get $\widehat{\mu}_2(t_0,\tau)>0$ for $\tau\in (0,\tau_*)$. Using  Lemma \ref{LD2a}, we may assume that $\delta_0$ is chosen to guarantee
  \begin{equation}\label{M19ba}
  \widehat{\mu}_2(t,\tau)>0\;\; \mbox{for $t\in (t_0,t_0+\delta_0)$ and $\tau\in [\delta_1,\tau_*/2]$},
  \end{equation}
where $\delta_1$ is a small positive number.

Second, consider the sequence of functions (\ref{F6a})
$$
\widehat{\tau}_1(t)<\widehat{\tau}_2(t)<\cdots \widehat{\tau}_m(t).
$$
By Lemma \ref{LF7aa} some of them satisfy (\ref{F6ba}) with the the inequality $>0$ and some of the with the inequality $<0$.
Let $n_*$ be the largest index among $1,\ldots,m$ for which the inequality $<0$ is valid in   (\ref{F6ba})  and hence for indexes $j=n_*+1,\ldots,m$ the inequality $>0$ holds in (\ref{F6ba}). Such index exists otherwise $\widehat{\mu}_2(t,\widehat{\tau}_j)\geq 0$ for $t\in (t_0,t_0+\epsilon)$, which contradicts to (\ref{M19b}) and (\ref{M19ba}).

We choose $t_M=t_0+\varepsilon_M$ satisfying the relation
\begin{equation}\label{F7c}
\widehat{\tau}_{n_*}(t_M)=\tau_*/M.
\end{equation}
Our requirement on $M_0$ is that the equation (\ref{F7c}) is solvable for all $M>M_0$.

(i) Let us check (i). The kernel of the operator $A(t)$ consists of functions (\ref{Sept14a}) with $\tau_j$ satisfying
\begin{equation}\label{F7ca}
\widehat{\tau}_{k}(t_M)=\tau_j:=j\tau_*/M\;\;\mbox{for certain $k=n_*+1,\ldots,m$}.
\end{equation}
 If we take $t\neq t_M$ in a small neighborhood of $t_M$ then the kernel will be trivial since all functions $\widehat{\tau}_j$ are strongly increasing.

(ii) Since the functions $\widehat{\tau}_{k}$, $k=n_*+1,\ldots,m$, satisfy (\ref{F6ba}) with the inequality $>0$ they  do not contribute to the changing of the Morse index at $t_M$. Since the function  $\widehat{\tau}_{n_*}$ satisfies
(\ref{F6ba}) with $<0$ instead of $\neq 0$ the eigenvalue $\widehat{\mu}_2(t_M,\tau)$ changes sign at the point $\tau=\widehat{\tau}_{n_*}(t_M)$.
Hence $\widehat{\tau}_{n_*}$  contributes to the changing of the Morse index at $t_M$  with $+1$.  So the crossing number at $t_M$ is $1$. This proves properties (i) and (ii) and the theorem.

\end{proof}







\section{Global subharmonic bifurcation theorems}

Let
$$
X=C^{2,\alpha}_{0,M\Lambda,e}(Q),\;\;Y=Y_1\times Y_2=C^{0,\alpha}_{0,M\Lambda,e}(Q)\times C^{1,\alpha}_{M\Lambda}(\Bbb R).
$$
Introduce a subset in $\Bbb R\times X$:
$$
{\mathcal O}_{\delta}=\{(t,w)\in \Bbb R\times X\,:\,(h+w)_p>\delta\; \mbox{in}\; \overline{Q}, \delta<\lambda(t)\}.
$$
Then the functional
$$
\Gamma(w,t)=({\mathcal F}(w,t),{\mathcal G}(w,t))\,:\,{\mathcal O}_{\delta}\rightarrow Y=Y_1\times Y_2,
$$
where ${\mathcal F}$ and ${\mathcal G}$ are defined by (\ref{A23a}) and (\ref{A23aa}), is well defined and continuous.

The Frechet derivative at the point $w$ has the form
\begin{equation}\label{J4aa}
A\xi=A(w;t)\xi=\Big(\frac{\lambda^2(h+w)_q\xi_q}{(h+w)_p^2}-\frac{(1+\lambda^2(h+w)_q^2)\xi_p}{(h+w)_p^3}\Big)_p
-\lambda^2\Big(\frac{\xi_q}{(h+w)_p}-\frac{(h+w)_q\xi_p}{(h+w)_p^2}\Big)_q
\end{equation}
and
\begin{equation}\label{J4aba}
{\mathcal N}\xi={\mathcal N}(w;t)\xi=(N \xi-\xi)|_{p=1},
\end{equation}
where
\begin{equation}\label{J4ab}
N \xi=N(w;t)\xi=\Big(-\frac{\lambda^2(h+w)_q\xi_q}{(h+\xi)_p^2}+\frac{(1+\lambda^2(h+w)_q^2)\xi_p}{(h+w)_p^3}\Big)\Big|_{p=1}.
\end{equation}
The eigenvalue problem for the Frechet derivative, which is important for the analysis of bifurcations of the problem
(\ref{F19a}), is the following
\begin{eqnarray}\label{M1a}
&&A(w;t)\xi=\mu \xi\;\;\mbox{in $Q$},\nonumber\\
&&{\mathcal N}(w;t)\xi=0\;\;\mbox{for $p=1$},\nonumber\\
&&\xi=0\;\;\mbox{for $p=0$}.
\end{eqnarray}

Introduce some sets:
$$
{\mathcal S}_{\delta}=\mbox{closure in $\Bbb R\times X$ of}\;\;\{(t,w)\in {\mathcal O}_{\delta}\,:\,\Gamma(w,t)=0,\;w\;\;\mbox{is not identically zero}\}.
$$
Let also ${\mathcal C}'_{\delta}$ be the connected component of ${\mathcal S}_{\delta}$ containing the point $(t_M,0)$ together with the connected set from Theorem \ref{TF9a}.

The following theorem is an analog of Theorem 4.2 \cite{CSst}.

\begin{theorem}\label{T4.2C}
Let $\delta>0$. Then either

{\rm (i)} ${\mathcal C}'_{\delta}$ is unbounded in $\Bbb R\times X$, or

{\rm (ii)} ${\mathcal C}'_{\delta}$ contains another trivial point $(t_1,0)$ with $t_1\neq t_M$, or

{\rm (iii)} ${\mathcal C}'_{\delta}$ contains a point $(t,w)\in\partial {\mathcal O}_{\delta}$.

\end{theorem}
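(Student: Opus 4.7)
The plan is to deduce Theorem \ref{T4.2C} from a global bifurcation alternative of Rabinowitz--Kielh\"ofer type applied to the nonlinear operator $\Gamma(w,t)=({\mathcal F}(w,t),{\mathcal G}(w,t))$ on the open set ${\mathcal O}_\delta$. The local ingredient has just been established in Theorem \ref{TF9a}: at the point $(t_M,0)$ the linearisation has a one-dimensional kernel and odd crossing number $+1$. The global ingredient I need is the compactness/Fredholm structure of $\Gamma$, together with the fact that any bounded family of nontrivial solutions that is bounded away from $\partial {\mathcal O}_\delta$ is precompact in $X$.

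First I would recast $\Gamma=0$ as an equation of the form $w=\mathcal{K}(w,t)$ with $\mathcal{K}$ compact on ${\mathcal O}_\delta$. This is the standard move: solve the leading-order linear elliptic part (as in Proposition \ref{PrM1}) to invert the principal symbol of $(\mathcal F,\mathcal G)$, and write the remainder as a nonlinear compact perturbation using the compact embedding $C^{2,\alpha}_{0,M\Lambda,e}(Q)\hookrightarrow C^{2,\alpha'}_{0,M\Lambda,e}(Q)$ for $\alpha'<\alpha$. The analytic dependence of $h$ and $\lambda$ on $t$, together with the analyticity of the nonlinearity in its arguments, ensures that $\mathcal K$ is real-analytic on ${\mathcal O}_\delta$, and in particular $I-\mathcal K_w(\cdot,t)$ is Fredholm of index zero.

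Next I would apply the global alternative (for instance Theorem II.3.3 / II.5.8 in Kielh\"ofer or the classical Rabinowitz alternative adapted to an open set; this is the same framework used in \cite{CSst} to prove their Theorem 4.2). Since the crossing number at $(t_M,0)$ is odd (equal to $1$ by Theorem \ref{TF9a}), a connected component ${\mathcal C}'_\delta$ of the closure of the nontrivial solution set bifurcates from $(t_M,0)$ and satisfies one of the following: it is unbounded in $\mathbb R\times X$; it meets another trivial solution $(t_1,0)$ with $t_1\ne t_M$; or it approaches the boundary of the admissible set, i.e.\ contains a point of $\partial {\mathcal O}_\delta$. These are precisely the three alternatives (i), (ii), (iii) in the statement. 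The connected component from Theorem \ref{TF9a} is, by construction, part of ${\mathcal C}'_\delta$, so the three alternatives apply to that enlarged component.

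The main obstacle, and the step I expect to require the most care, is the verification of the compactness/properness of $\Gamma$ on the closed bounded subsets of ${\mathcal O}_\delta$: one has to show that a sequence of solutions $(t_n,w_n)$ with $t_n$ bounded and $w_n$ bounded in $X$, staying a positive distance from $\partial {\mathcal O}_\delta$ (so that $(h+w_n)_p\geq\delta$ uniformly, $\lambda(t_n)\geq\delta$, and the quotients in (\ref{A23a})--(\ref{A23aa}) remain regular), admits a subsequence converging in $X$. This follows from uniform Schauder estimates for the quasilinear elliptic problem (\ref{F19a}) in the $(q,p)$ formulation: the bound on $w_n$ in $C^{2,\alpha}$ together with the uniform ellipticity and the oblique-derivative character of ${\mathcal G}=0$ yield, via Proposition \ref{PrM1} and a bootstrap argument, a uniform bound in a slightly better norm, hence precompactness in $X$. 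Once this is in hand, the classical proof of the Rabinowitz alternative (no small continuum separating the bifurcation point from infinity, the trivial branch, and $\partial {\mathcal O}_\delta$, using connectedness of the covering of ${\mathcal C}'_\delta$ by $\varepsilon$-neighbourhoods) goes through, giving the required trichotomy.
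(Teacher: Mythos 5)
Your proposal is correct and follows essentially the same route as the paper: both invoke a Rabinowitz/Kielh\"ofer global alternative driven by the odd crossing number established in Theorem \ref{TF9a}, with the elliptic structure of the problem in the $(q,p)$ variables supplying the Fredholm and compactness properties, exactly as in the Constantin--Strauss proof of their Theorem~4.2. The one cosmetic difference is that the paper first multiplies $\mathcal F$ and $\mathcal G$ by $(h_p+w_p)^3$ and $(h_p+w_p)^2$ respectively to obtain a denominator-free system $(F,G)=0$ before citing \cite{CSst}, whereas you keep the quotients and instead control them by working a positive distance from $\partial\mathcal O_\delta$; these amount to the same thing.
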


In \cite{KL1} it is proved an estimate $\Lambda(t)\geq c_0>0$ for all $t\in \Bbb R$, where the constant $c_0$ depends only on $\omega$ and $r$. This estimate implies
$$
\lambda(t)\leq\frac{\Lambda(0)}{c_0}.
$$

\subsection{Proof of Theorem \ref{T4.2C}}

The proof of the theorem is actually the  same  as that of Theorem 4.2 \cite{CSst}. 

We have
\begin{eqnarray*}
&&{\mathcal F}=-\frac{1+\lambda^2(h_q+w_q)^2}{(h_p+w_p)^3}(h_{pp}+w_{pp})-\lambda^2\frac{h_{qq}+w_{qq}}{h_p+w_p}
+2\lambda^2\frac{(h_q+w_q)(h_{qp}+w_{qp})}{(h_p+w_p)^2}\\
&&=\frac{1}{(h_p+w_p)^3}F(w,t),
\end{eqnarray*}
where
$$
F(w,t)=-(1+\lambda^2(h_q+w_q)^2)(h_{pp}+w_{pp})-\lambda^2(h_p+w_p)^2(h_{qq}+w_{qq})
+2\lambda^2(h_q+w_q)(h_p+w_p)(h_{qp}+w_{qp})
$$
and
\begin{eqnarray*}
&&{\mathcal G}=\frac{1}{(h_p+w_p)^2}G(w,t),
\end{eqnarray*}
where
$$
G(w,t)=1+\lambda^2(h_q+w_q)^2+2(w+h-R)(h_p+w_p)^2.
$$
Since the functions (\ref{J4ac}) solve (\ref{J4a}) we have
$$
F(0,t)=0,\;\;G(0,t)=0\;\;\mbox{for all $t$}.
$$
Therefore the system $\Gamma(w,t)=0$ has the same solutions as the system $(F,G)=0$ provided $h_p+w_p>0$ inside $\overline{Q}$. Since the arguments in
the proof of Theorem 4.2 \cite{CSst} were based only on the periodicity of functions and ellipticity of the problem $(F,G)=0$ the proof given  in \cite{CSst} can be used with small changes to prove  Theorem \ref{T4.2C}.

\subsection{Main Theorem on global subharmonic bifurcations}

Let
$$
{\mathcal C}'=\bigcup_\delta {\mathcal C}'_\delta
$$
The sets ${\mathcal C}'_\delta$
 increase as $\delta$ decreases.

Let
\begin{equation}\label{A17a}
\widehat{\psi}(X,Y;t),\;\widehat{\xi}(X;t),\;\Lambda(t)
\end{equation}
be the elements from ${\mathcal C}'$ in $(X,\,Y)$-coordinates. We will denote the set pf such functions also by ${\mathcal C}'$.
Let
\begin{equation}\label{A21a}
\widehat{\Psi}(X,Y;t)=\Psi(X,Y;t)+\widehat{\psi}(X,Y;t),\;\;\Xi(X;t)=\xi(X;t)+\widehat{\xi}(X,Y;t)
\end{equation}

The main result of this section is the following

\begin{theorem}\label{ThA22a} The following alternatives are valid for the set {\rm (\ref{A17a})}:

{\rm (i)} $\sup_{{\mathcal C}'}\sup_{X\in\Bbb R}|\Xi'(X)|=\infty$, or

{\rm (ii)} $\sup_{{\mathcal C}'}\max_{X\in\Bbb R}(R-\Xi(X))=0$, or

{\rm (iii)} $\sup_{{\mathcal C}'}\sup_{X\in\Bbb R}|\widehat{\Psi}_Y(X,0)|=0$, or

{\rm (iv)} there exists $\delta>0$ such that {\rm (ii) from Theorem \ref{T4.2C}} is valid.

\end{theorem}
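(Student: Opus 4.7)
The plan is to argue by contradiction. Suppose that none of alternatives (i)--(iv) holds. The failure of (i)--(iii) yields positive constants $c_1,c_2,c_3$ with
\begin{equation*}
|\Xi'(X)|\le c_1,\qquad R-\Xi(X)\ge c_2,\qquad \widehat\Psi_Y(X,0)\ge c_3
\end{equation*}
uniformly on ${\mathcal C}'$, while the failure of (iv) means that for every $\delta>0$ alternative (ii) of Theorem \ref{T4.2C} fails for ${\mathcal C}'_\delta$. Thus each ${\mathcal C}'_\delta$ is either unbounded in $\Bbb R\times X$ or meets $\partial{\mathcal O}_\delta$. My goal is to exhibit a single $\delta_0>0$ for which both possibilities are excluded.

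The crucial first step is to convert the three geometric bounds into a uniform two-sided bound $0<c_4\le\widehat\Psi_Y\le C$ on $\overline{{\mathcal D}_\Xi}$, equivalently $0<c_5\le(h+w)_p\le C'$ on $\overline Q$. On $Y=\Xi(X)$, the kinematic identity $\widehat\Psi_X+\widehat\Psi_Y\Xi'=0$ combined with Bernoulli's law gives
\begin{equation*}
\widehat\Psi_Y\big|_{Y=\Xi(X)}=\sqrt{\frac{2(R-\Xi)}{1+\Xi'^2}},
\end{equation*}
so $R-\Xi\ge c_2$ and $|\Xi'|\le c_1$ yield two-sided control of $\widehat\Psi_Y$ at the surface; the bottom bound $\widehat\Psi_Y(X,0)\ge c_3$ is given. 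Since $\widehat\Psi_Y>0$ on $\overline{{\mathcal D}_\Xi}$ by unidirectional flow and satisfies $\Delta v+\omega'(\widehat\Psi)v=0$ (differentiate the stream equation in $Y$), Harnack's inequality propagates the boundary control into the interior, yielding the required uniform two-sided bound.

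Combining this with $\Lambda(t)\ge c_0$ from \cite{KL1} (hence $\lambda(t)\le\Lambda(0)/c_0$), I choose $\delta_0>0$ so that $(h+w)_p\ge 2\delta_0$ and $\lambda(t)\ge 2\delta_0$ throughout ${\mathcal C}'$. Then ${\mathcal C}'_{\delta_0}$ stays off $\partial{\mathcal O}_{\delta_0}$, excluding alternative (iii) of Theorem \ref{T4.2C}. The uniform ellipticity granted by the lower bound on $(h+w)_p$ then allows Schauder estimates to upgrade the $L^\infty$ bounds to uniform $C^{2,\alpha}$ control of $(w,\widehat\xi)$ on ${\mathcal C}'_{\delta_0}$. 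If $t_n\to\infty$ along some sequence in ${\mathcal C}'_{\delta_0}$, the primary branch $\xi(\cdot,t_n)$ approaches the extreme wave, so $\sup|\xi'(\cdot,t_n)|\to\infty$ or $R-\min\xi(\cdot,t_n)\to 0$, and the uniform bound on $\widehat\xi$ forces the same behaviour on $\Xi=\xi+\widehat\xi$, violating (i) or (ii) of the present theorem. Hence the $t$-projection of ${\mathcal C}'_{\delta_0}$ is bounded, and combined with the $C^{2,\alpha}$ bound on $w$, ${\mathcal C}'_{\delta_0}$ is bounded in $\Bbb R\times X$, excluding alternative (i) of Theorem \ref{T4.2C}. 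Theorem \ref{T4.2C} now forces alternative (ii), that is, the existence of a second trivial point, which is precisely alternative (iv) of the present theorem and contradicts our initial assumption.

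The main obstacle is the first step. While the boundary bounds follow immediately from the geometric assumptions, propagating them uniformly along the entire continuum into the interior requires delicate Harnack estimates for $\Delta v+\omega'(\widehat\Psi)v=0$ with the $t$-dependent Robin condition $\partial_\nu v-\rho v=0$ at the free surface and with corners of $\overline{{\mathcal D}_\Xi}$; one must verify that all constants depend only on $c_1,c_2,c_3,\omega,R$ and not on the individual wave in the continuum. The $t$-unboundedness case, handled via the asymptotics of the primary branch toward the extreme wave, is conceptually simpler but requires the $C^{2,\alpha}$ control of $\widehat\xi$ to translate singularities on $\xi$ into singularities on $\Xi$.
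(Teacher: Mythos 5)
Your overall strategy is the same as the paper's: negate (i)--(iv), translate the geometric bounds into a uniform interior lower bound for $\widehat\Psi_Y$ (equivalently, a lower bound on $(h+w)_p$), conclude that ${\mathcal C}'_{\delta_0}$ avoids $\partial{\mathcal O}_{\delta_0}$ and is bounded, and invoke Theorem~\ref{T4.2C} to force alternative (ii) there, contradicting the negation of (iv). You also correctly extract the surface lower bound on $\widehat\Psi_Y$ from Bernoulli plus the kinematic identity. So the architecture matches.

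The genuine gap is the step you yourself flag as ``the main obstacle'': Harnack's inequality is not the right tool for converting the \emph{boundary} lower bounds on $\widehat\Psi_Y$ into a uniform \emph{interior} lower bound. Harnack compares interior values of a positive solution to one another; it does not directly import control from $\partial{\mathcal D}_\Xi$, and the zeroth-order coefficient $\omega'(\widehat\Psi)$ has no favourable sign, so neither does a naive maximum principle with constant barriers. The paper avoids this by a three-stage argument: (a) compare $\widehat\Psi_Y$ against the subsolution $a\sinh(\beta Y)$, choosing $\beta^2\ge\max|\omega'|$ to absorb the zeroth-order term and $a$ so that the barrier is dominated on the free surface; this gives $\widehat\Psi_Y\ge a\sinh(\beta Y)$, which is a genuine interior bound but \emph{degenerates} at $Y=0$; (b) feed this degenerate-but-away-from-the-bottom bound into Proposition~\ref{P3.2} to get uniform $C^{3,\widehat\alpha}$ estimates on the portion $\{Y\ge d_-\}$ and hence on all of $\overline{{\mathcal D}_\Xi}$; (c) in the bottom strip ${\mathcal Q}_{d_-}$, use the uniform $C^{3,\widehat\alpha}$ bound together with the boundary data $\widehat\Psi_Y(X,d_-)\ge\delta_2$, $\widehat\Psi_Y(X,0)\ge\widehat\delta_1$ and positivity in the interior to run a compactness argument that forces a uniform positive lower bound there. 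None of this is replaced by ``Harnack plus corners'', and since $a\sinh(\beta Y)\to 0$ as $Y\to 0$, the bottom lower bound from the negation of (iii) is essential precisely in step (c), which your sketch omits.

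Two smaller points. First, in your $t$-unboundedness argument the disjunct $\sup|\xi'(\cdot,t_n)|\to\infty$ is not available: as $t\to\infty$ the primary branch approaches the extreme wave, for which $\xi'$ develops a jump at the crest of size $2/\sqrt{3}$ but stays bounded; the correct mechanism is that the crest rises, i.e.\ $\min_X\bigl(R-\xi(X;t_n)\bigr)\to 0$, not ``$R-\min\xi\to 0$'' as you wrote (the trough stays below $R$ uniformly). Second, once you do have the uniform $(h+w)_p\ge 2\delta_0$ bound, the boundedness of ${\mathcal C}'_{\delta_0}$ in $\Bbb R\times X$ already follows from the $C^{3,\widehat\alpha}$ control supplied by Proposition~\ref{P3.2}; the separate appeal to generic Schauder estimates is redundant.
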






\subsection{Proof of Theorem \ref{ThA22a}}

The proof will use the following assertion, which is proved in \cite{KL1}, Proposition 3.2.

\begin{proposition}\label{P3.2} Assume that $\omega\in C^{1,\alpha}([0, 1])$. Let  $\delta > 0$ be given as well as
a ball $B$ of radius $\rho > 0$ and let $M = \sup_{(X,\xi(X))\in B} |\xi'(X)|$. Then there exist constants $\widehat{\alpha}\in (0, 1)$
and $C > 0$, depending only on $R$, $\delta$, $\rho$ and $M$ such that any solution $(\Psi,\xi)\in C^{2,\alpha}(\overline{D_\xi})\times C^{2,\gamma}(\Bbb R)$ of {\rm (\ref{K2a})} with $\inf_B \Psi_Y \geq \delta$ satisfies $||\Psi||_{C^{3,\widehat{\alpha}}(D_\xi\cap \frac{1}{2} B)}\leq C$, where $\frac{1}{2} B$ is a ball with the same centre and
radius $\frac{1}{2} \rho$.

\end{proposition}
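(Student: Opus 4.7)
I argue by contradiction: suppose the four alternatives (i)--(iv) all fail. Then there are constants $M,c_1,c_2>0$ such that every $(\widehat{\Psi},\Xi)\in\mathcal{C}'$ satisfies
\[
\sup_X|\Xi'(X)|\le M,\qquad \min_X(R-\Xi(X))\ge c_1,\qquad \inf_X|\widehat{\Psi}_Y(X,0)|\ge c_2,
\]
and for every $\delta>0$, $\mathcal{C}'_\delta$ contains no trivial point other than $(t_M,0)$. The aim is to find a $\delta>0$ for which $\mathcal{C}'_\delta$ is bounded in $\Bbb R\times X$ and bounded away from $\partial\mathcal{O}_\delta$, contradicting Theorem \ref{T4.2C}.

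The central step is a uniform non-stagnation estimate for $\widehat{\Psi}_Y$ throughout $\overline{\mathcal{D}_\Xi}$. On the surface, the identity $\widehat{\Psi}(X,\Xi(X))\equiv 1$ together with the Bernoulli condition yields
\[
\widehat{\Psi}_Y(X,\Xi(X))=\sqrt{\frac{2(R-\Xi(X))}{1+\Xi'(X)^2}}\in\Bigl[\sqrt{\tfrac{2c_1}{1+M^2}},\sqrt{2R}\Bigr].
\]
At the bottom $Y=0$ the negation of (iii) supplies the lower bound $c_2$; the strict positivity (\ref{J27ba}) of $\widehat{\Psi}_Y$ in $\overline{\mathcal{D}_\Xi}$, combined with the linear elliptic equation $\Delta\widehat{\Psi}_Y+\omega'(\widehat{\Psi})\widehat{\Psi}_Y=0$ that it satisfies, allows a Harnack-type comparison to propagate these boundary lower bounds to the interior. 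This produces a uniform constant $c_3>0$ depending only on $R$, $c_1$, $c_2$, $M$ and $\omega$, with $c_3\le\widehat{\Psi}_Y\le c_3^{-1}$ everywhere in $\overline{\mathcal{D}_\Xi}$. With this non-degeneracy in hand, Proposition \ref{P3.2} applies with uniform constants on a finite cover of one fundamental period (whose diameter is controlled by $M$), giving $K>0$ with
\[
\|\widehat{\Psi}\|_{C^{3,\widehat{\alpha}}}+\|\Xi\|_{C^{2,\widehat{\alpha}}}\le K
\]
uniformly along $\mathcal{C}'$. Pulling back through the partial hodograph transform of Section \ref{SF22}, the full height function $\widehat{h}=h+w$ is uniformly $C^{2,\alpha}$-bounded and $(h+w)_p=1/\widehat{\Psi}_Y\in[c_3,c_3^{-1}]$.

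Choose $\delta>0$ smaller than $c_3$ and the universal lower bound on $\lambda(t)$ provided by the estimate $\Lambda(t)\ge c_0$ recorded after Theorem \ref{T4.2C}. Then every point of $\mathcal{C}'_\delta$ lies in the interior of $\mathcal{O}_\delta$, ruling out alternative (iii) of Theorem \ref{T4.2C}. For boundedness of $\mathcal{C}'_\delta$ in $\Bbb R\times X$, write $w=\widehat{h}-h(\cdot,\cdot;t)$: the $C^{2,\alpha}$ norm of $\widehat{h}$ is controlled by $K$, while the background $h(\cdot,\cdot;t)$ can only blow up as $t\to\infty$, where the Stokes wave approaches the extreme configuration with $\min_X(R-\xi(\cdot;t))\to 0$. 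A sequence $t_n\to\infty$ in $\mathcal{C}'_\delta$ would allow the extraction, by the compactness bound $K$, of a limiting physical solution $(\widehat{\Psi}_\infty,\Xi_\infty)$ still satisfying $\min_X(R-\Xi_\infty)\ge c_1$, whose coexistence with the degenerating Stokes background $(\Psi(\cdot;t_n),\xi(\cdot;t_n))$ leads to a contradiction. Hence $t$ is uniformly bounded along $\mathcal{C}'_\delta$, and the $X$-norm of $w$ is uniformly bounded too. Thus $\mathcal{C}'_\delta$ is bounded and interior to $\mathcal{O}_\delta$, so Theorem \ref{T4.2C} forces alternative (ii) of that theorem, contradicting our assumption that alternative (iv) fails.

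The chief technical obstacle is the interior lower bound $\widehat{\Psi}_Y\ge c_3$: since $\omega'(\widehat{\Psi})$ has no definite sign, a direct maximum principle is unavailable. The bound must be extracted from a Harnack inequality for $\Delta u+c(X,Y)u=0$ with $c\in L^\infty$, using the uniformly regular geometry provided by the slope bound $M$; alternatively, a blow-up argument along a putative sequence with $\inf\widehat{\Psi}_Y\to 0$ produces a limit with an interior zero of a positive function, violating the strong maximum principle.
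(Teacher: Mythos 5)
You have proved the wrong statement. The proposal is a sketch proof of Theorem~\ref{ThA22a} (the main theorem on global subharmonic bifurcations, with alternatives (i)--(iv)), not of Proposition~\ref{P3.2}. Indeed, your argument explicitly \emph{invokes} Proposition~\ref{P3.2} as a tool partway through (``With this non-degeneracy in hand, Proposition~\ref{P3.2} applies with uniform constants on a finite cover of one fundamental period\ldots''), so read as a proof of that proposition it is circular.

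Proposition~\ref{P3.2} is a local quantitative regularity estimate of Schauder type: given a bound $M$ on the surface slope inside a ball $B$ of radius $\rho$, and the non-degeneracy $\inf_B\Psi_Y\ge\delta$, one obtains a uniform $C^{3,\widehat\alpha}$ bound for $\Psi$ on the half-sized ball $\tfrac12 B$, with constants depending only on $R$, $\delta$, $\rho$ and $M$. This paper does not prove it; it is quoted verbatim from \cite{KL1}, Proposition~3.2, and used as a black box in the proof of Theorem~\ref{ThA22a}. A proof would proceed via elliptic regularity for the free-boundary system (\ref{K2a}): the lower bound $\Psi_Y\ge\delta$ together with $|\xi'|\le M$ lets one flatten the free surface locally (e.g.\ by the partial hodograph transform $q=X$, $p=\Psi$, under which the surface becomes $p=1$) with controlled regularity of the change of variables; the transformed quasilinear equation and the Bernoulli condition then feed into interior and boundary Schauder estimates, and since $\omega\in C^{1,\alpha}$ one can bootstrap from the assumed $C^{2,\alpha}$ regularity to a uniform $C^{3,\widehat\alpha}$ bound on $\tfrac12 B$. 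None of your sketch touches this local elliptic-regularity problem; the Harnack/maximum-principle ideas you discuss are aimed at producing the hypothesis $\inf\Psi_Y\ge\delta$, not at drawing the regularity conclusion from it.
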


The next lemma contains the main step of the proof.

\begin{lemma} Let  $\widehat{\Psi}_Y(X,Y;t)\geq 0$ in ${\mathcal D}_\Xi$ and let there exist positive constants $C_1$, $C_2$, $\delta_1$ and $\widehat{\delta}_1$ such that
\begin{equation}\label{A18a}
|\Xi'(X;t)|\leq C_1,\;\;\Lambda(t)\leq C_2\;\;\mbox{and}\;\;R-\Xi(X;t)\geq \delta_1\;\;\widehat{\Psi}_Y(X,0;t)\geq\widehat{\delta}_1
\end{equation}
for functions from ${\mathcal C}'$.
Then there exist $\delta>0$ such that
\begin{equation}\label{A18aa}
\widehat{\Psi}_Y(X,Y;t)\geq\delta\;\;\mbox{for $(X,Y)\in {\mathcal D}_\Xi$}.
\end{equation}
\end{lemma}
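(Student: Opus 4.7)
The plan is to argue by contradiction, combining compactness from the a priori bounds (\ref{A18a}) with the strong maximum principle for $\widehat\Psi_Y$. As a preparation, I upgrade the hypothesized bottom lower bound on $\widehat\Psi_Y$ to a uniform lower bound on the free surface. Since $\widehat\Psi\equiv 1$ on $\mathcal{S}_\Xi$, tangential differentiation gives $\widehat\Psi_X=-\widehat\Psi_Y\Xi'$ there, and substituting into the Bernoulli condition $\tfrac12|\nabla\widehat\Psi|^2+Y=R$ yields $\widehat\Psi_Y^{2}(1+\Xi'^{2})=2(R-\Xi)$. Combined with $|\Xi'|\le C_1$, $R-\Xi\ge\delta_1$, and the standing sign hypothesis $\widehat\Psi_Y\ge 0$, this produces the uniform lower bound $\widehat\Psi_Y\ge\widehat\delta_2:=\sqrt{2\delta_1/(1+C_1^{2})}>0$ on $\mathcal{S}_\Xi$ for every element of $\mathcal{C}'$.

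Next, suppose the conclusion fails: there exist $(\widehat\Psi_n,\Xi_n,t_n)\in\mathcal{C}'$ and points $(X_n,Y_n)\in\overline{\mathcal{D}_{\Xi_n}}$ with $\widehat\Psi_{n,Y}(X_n,Y_n)\to 0$. Using $X$-periodicity I shift so that $X_n\in[-\Lambda_n/2,\Lambda_n/2]$; by the estimate $\Lambda(t)\ge c_0$ from \cite{KL1} and the hypothesis $\Lambda_n\le C_2$, a subsequence has $\Lambda_n\to\Lambda_\infty>0$ and $X_n\to X_\infty$. Proposition \ref{P3.2} is then applied in balls centered at bottom points (where the assumption $\widehat\Psi_{n,Y}(\cdot,0)\ge\widehat\delta_1$ together with a continuity/bootstrap argument forces $\widehat\Psi_{n,Y}\ge\widehat\delta_1/2$ on a ball whose radius depends only on the a priori data) and at free-surface points (where the bound $\widehat\delta_2$ plays the analogous role); complementary interior Schauder estimates cover the bulk. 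This yields uniform $C^{3,\widehat\alpha}$ bounds on $\widehat\Psi_n$ and $C^{2,\widehat\alpha}$ bounds on $\Xi_n$. Extracting a further subsequence, I get $C^3$-convergence to a limit $(\widehat\Psi_\infty,\Xi_\infty)$ that solves the full free-boundary problem (\ref{K2a}) with period $\Lambda_\infty$, with $(X_n,Y_n)\to(X_\infty,Y_\infty)\in\overline{\mathcal{D}_{\Xi_\infty}}$ and $\widehat\Psi_{\infty,Y}(X_\infty,Y_\infty)=0$.

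To reach the contradiction, differentiate the bulk equation in $Y$: the function $u:=\widehat\Psi_{\infty,Y}$ satisfies the homogeneous elliptic equation $\Delta u+\omega'(\widehat\Psi_\infty)u=0$ in $\mathcal{D}_{\Xi_\infty}$, is nonnegative everywhere by $C^1$-convergence of $\widehat\Psi_{n,Y}$, and on $\{Y=0\}\cup\mathcal{S}_{\Xi_\infty}$ exceeds the positive constant $\min(\widehat\delta_1,\widehat\delta_2)$. If $(X_\infty,Y_\infty)$ lies on the boundary, the uniform boundary bound contradicts $u(X_\infty,Y_\infty)=0$. If it lies in the interior, $u$ attains its minimum value $0$ at an interior point, so the strong maximum principle forces $u\equiv 0$ in $\mathcal{D}_{\Xi_\infty}$, again contradicting the positive boundary bound. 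This proves the lemma.

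The chief obstacle will be the uniform $C^{3,\widehat\alpha}$ compactness step, specifically verifying the hypothesis of Proposition \ref{P3.2} in a ball of fixed radius near the free surface. One needs a ball on which $\widehat\Psi_Y$ is bounded below by a positive constant independent of $t$, and this requires combining the Bernoulli-derived surface bound $\widehat\delta_2$ with the slope control $|\Xi'|\le C_1$ (which ensures the surface does not oscillate rapidly) to carve out a uniform collar neighborhood of $\mathcal{S}_\Xi$ where $\widehat\Psi_Y\ge\widehat\delta_2/2$ holds in a ball of $t$-independent radius.
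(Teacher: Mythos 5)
The core difficulty that you flag at the end as the ``chief obstacle'' is a genuine gap, and it is precisely the place where the paper does something you have not replicated. Proposition~\ref{P3.2} requires $\inf_B \Psi_Y\geq\delta$ on a ball $B$ of fixed radius near the free surface, but the hypotheses hand you only $\widehat\Psi_Y\geq 0$ in the interior plus the Bernoulli‐derived bound $\widehat\Psi_Y\geq\widehat\delta_2$ on the surface curve itself. Extending this to a collar of $t$-independent thickness needs a modulus of continuity for $\widehat\Psi_Y$, which in turn needs second-derivative bounds, which is exactly what you want Proposition~\ref{P3.2} to give you. The circularity is real and the slope bound $|\Xi'|\leq C_1$ does not break it: it controls the geometry of the surface but says nothing about how fast $\widehat\Psi_Y$ can drop off as you move inward. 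The same circularity infects your bottom-boundary ball as well, where you write that a ``continuity/bootstrap argument'' would show $\widehat\Psi_{n,Y}\geq\widehat\delta_1/2$ on a ball of fixed radius.

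The paper's resolution, which you are missing, is a barrier argument that works without any a priori regularity beyond the stated hypotheses. It compares $\widehat\Psi_Y$ against $a\sinh(\beta Y)$: since $\widehat\Psi_Y$ solves $\Delta v+\omega'(\widehat\Psi)v=0$ and $\sinh(\beta Y)$ is a subsolution once $\beta^2\geq\max|\omega'|$, choosing $a$ so that $\widehat\Psi_Y - a\sinh(\beta Y)\geq 0$ on the surface (via the Bernoulli bound $\widehat\delta_2$) and noting it vanishes at $Y=0$ yields, by the maximum principle, the pointwise lower bound $\widehat\Psi_Y\geq a\sinh(\beta Y)$ throughout $\mathcal{D}_\Xi$. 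This is the explicit positive lower bound that makes Proposition~\ref{P3.2} applicable on the region $\{Y\geq d_-\}$ (using also the a priori estimate $\Xi\geq d_-$ from the cited reference, which you do not invoke but which is needed to know the horizontal strip $\{Y\leq d_-\}$ lies inside the water domain). Your final step — extract a limit, differentiate in $Y$, invoke the strong maximum principle against the boundary positivity — is essentially the same as the paper's step~3 for the remaining bottom strip $\mathcal{Q}_{d_-}$; the Bernoulli computation in your first paragraph matches the paper's step~1. It is the intermediate barrier-function step that is absent, and without it the compactness you rely on does not close.
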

\begin{proof}  The proof consists of several steps.

1. (The first estimate of $\widehat{\Psi}_Y$.) Differentiating $\widehat{\Psi}(X,\Xi (X;t);t)=1$ with respect to $X$, we get
$$
\widehat{\Psi}_X+\widehat{\Psi}_Y\Xi'=0.
$$
Hence
$$
|\widehat{\Psi}_X|\leq C_1|\widehat{\Psi}_Y|\;\;\mbox{on ${\mathcal S}_{\Xi}$}.
$$
From the Bernoulli equation, we get
$$
\delta_1\leq R-\Xi(X)\leq \frac{1}{2}(1+C_1^2)\widehat{\Psi}_Y^2.
$$
Therefore
$$
\widehat{\Psi}_Y^2\geq \delta_2^2:= \frac{2\delta_1}{(1+C_1^2)}.
$$
To estimate the function $\widehat{\Psi}_Y$ on the whole domain ${\mathcal D}_\Xi$ we consider the function
$$
U=\widehat{\Psi}_Y-a\sinh(\beta Y),
$$
where $a$ and $\beta$ are positive constants.
Then
$$
-\Delta U-\omega(\widehat{\Psi})U=a(\beta^2+\omega')\sinh(\beta Y)\geq 0\;\;\mbox{if $\beta^2\geq \max_{p\in [0,1]}|\omega'(p)|$}.
$$
Furthermore, $U(X,0)=0$ and
$$
U(X,\Xi(X))\geq \delta_2-a\sinh(\beta d_-).
$$
We choose $a$ to satisfy
$$
\delta_2-a\sinh(\beta d_-)=0.
$$
By strong maximum principle we get $U\geq 0$ inside ${\mathcal D}_\Xi$. Thus
\begin{equation}\label{A19a}
\widehat{\Psi}_Y(X,Y)\geq a\sinh(\beta Y)\;\;\mbox{in ${\mathcal D}_\Xi$}.
\end{equation}

2. (The estimate of $\widehat{\Psi}$). Let $d_-$ be defined by (\ref{KKKb}). Then by Theorem 1.1 from \cite{KN11a},
$$
\xi(X)\geq d_-\;\;\mbox{for all $X\in\Bbb R$ and for all solutions to (\ref{K2a}) from ${\mathcal C}'$}.
$$
We split the domain ${\mathcal D}_\Xi$ as follows
$$
{\mathcal D}_\Xi=\widehat{\mathcal D}_\Xi\bigcup {\mathcal Q}_{d_-},
$$
where
$$
\widehat{\mathcal D}_\Xi= \{(X,Y)\,:\,X\in\Bbb R,\,d_-\leq Y<\Xi(X)\},\;\; {\mathcal Q}_{d_-}=\Bbb R\times (0,d_-).
$$
Using estimate (\ref{A19a}) together with Proposition \ref{P3.2}, we get
\begin{equation}\label{19aa}
||\widehat{\Psi}||_{C^{3,\widehat{\alpha}}(\overline{\widehat{\mathcal D}_\Xi})}\leq C
\end{equation}
Since $\widehat{\Psi}(\cdot,d_-)\in C^{3,\widehat{\alpha}}(\Bbb R)$ we have that $\widehat{\Psi}\in C^{3,\widehat{\alpha}}(\overline{{\mathcal Q}_{d_-}})$.

3. (The estimate of $\Psi_Y$ from below on ${\mathcal D}_{\Xi}$) Let us show that $\widehat{\Psi}_Y\geq \delta_3$ in  ${\mathcal Q}_{d_-}$, where $\delta_3$ is a positive constant.
Indeed, consider all solutions of
$$
\Delta U+\omega'(\widehat{\Psi})U=0\;\;\mbox{in ${\mathcal Q}_{d_-}$}
$$
satisfying (\ref{19aa}), $U(X,d_-)\geq\delta_4$ and $U\geq 0$ inside ${\mathcal Q}_{d_-}$ and
\begin{equation}\label{A21b}
U(X,d_-)\geq \delta_2\;\;\mbox{and}\;\;U(X,0)\geq\widehat{\delta}_1.
\end{equation}
Since the functions from this set cannot be zero inside ${\mathcal Q}_{d_-}$ we get that all functions must be positive in $\overline{\mathcal Q}_{d_-}$. Since the function $\widehat{\Psi}_Y$ also belongs to this set we get the inequality (\ref{A18aa}) on ${\mathcal Q}_{d_-}$. This together with
(\ref{A19a}) gives the proof of (\ref{A18aa}) on ${\mathcal D}_{\Xi}$.
\end{proof}

\bigskip
\noindent
{\bf Proof of Theorem} Asuume that the alternatives (i)-(iii) from Theorem \ref{ThA22a} are not valid. Then the bifurcating solutions satisfy (\ref{A18aa}) and (\ref{19aa}). This implies boundedness of solutions and that they belong to ${\mathcal C}'_\delta$ for certain positive delta. So we can apply Theorem \ref{T4.2C} and it can be  verified that only the alternative (ii) in Theorem \ref{T4.2C} can be applied here. This proves the theorem.

\begin{remark} In \cite{KL1} it was shown that under the condition $R<d_0$, where
$$
d_0=d(s_0)=\int_0^1\frac{d\tau}{\sqrt{s_0^2-2\Omega(\tau)}},
$$
the condition (iii) in Theorem \ref{ThA22a} can be omitted,

\end{remark}

\bigskip
\noindent {\bf Acknowledgements.} The author was supported by the Swedish Research
Council (VR), 2017-03837. The author expresses also his appreciation to a anonymous reviewer for many  useful comments and corrections.

\bigskip
\noindent
{\bf Data availability statement} Data sharing not applicable to this article as no datasets were generated or analysed during the current study.

\section{Conflict of interest}

On behalf of all authors, the corresponding author states that there is no conflict of interest.

\section{References}

{

\end{document}